\documentclass[11pt]{article}
\usepackage{amsfonts}
\usepackage{amsthm}
\usepackage{amsmath}
\usepackage{amssymb}
\usepackage{authblk}
\usepackage{xcolor}
\usepackage{appendix}
\usepackage{url}
\usepackage[margin=0.9in]{geometry}
\usepackage{titlesec}
\usepackage{booktabs}
\usepackage[T1]{fontenc}
\usepackage[utf8]{inputenc} % or remove entirely if using lualatex/xelatex
\usepackage[mathscr]{eucal}
\usepackage{indentfirst}
\usepackage{graphicx}
\usepackage{graphics}
\numberwithin{equation}{section}
\usepackage{epstopdf} 
\usepackage{xcolor}
\usepackage[colorlinks=true,
            linkcolor=blue,
            urlcolor=blue,
            citecolor=blue]{hyperref}
\usepackage{algorithm}
\usepackage{algpseudocode}  % Part of algorithmicx
\usepackage[
    backend=biber,
    style=nature,
    maxcitenames=3,
    maxbibnames=3,
]{biblatex}
\theoremstyle{plain}
\newtheorem{Th}{Theorem}[section]
\newtheorem{Lemma}[Th]{Lemma}
\newtheorem{Cor}[Th]{Corollary}
\newtheorem{Prop}[Th]{Proposition}

\theoremstyle{definition}
\newtheorem{Def}[Th]{Definition}

\newtheorem{Res}[Th]{Result}

\newcommand{\red}{\color{black}} % toggle if necessary
\newcommand{\bsalpha}{\boldsymbol{\alpha}}
\newcommand{\bsbeta}{\boldsymbol{\beta}}
\newcommand{\bsphi}{\boldsymbol{\phi}}

\newcommand{\bsvartheta}{\boldsymbol{\vartheta}}
\newcommand{\bsa}{\boldsymbol{a}}
\newcommand{\bsb}{\boldsymbol{b}}
\newcommand{\bsu}{\boldsymbol{u}}
\newcommand{\bsv}{\boldsymbol{v}}

\newcommand{\Var}{\text{Var}}
\newcommand{\software}[1]{{\color{brown!65!black}\textsc{#1}}}

\title{Asymptotic enumeration of admixed arrays and a different independence heuristic}
\author[1]{Alan J.~Aw\thanks{Email: \href{mailto:alan.aw@pennmedicine.upenn.edu}{alan.aw@pennmedicine.upenn.edu}}}
\affil[1]{Department of Genetics, Perelman School of Medicine, University of Pennsylvania}
\date{\today}

\begin{document}
\maketitle

\begin{abstract} 
We introduce a class of paired binary matrices called admixed arrays, which arise in {\red analyses} of large-scale genetic data and can be viewed as weighted edge colorings of complete bipartite graphs. This combinatorial structure gives rise to two natural families of marginal constraints: a row-sum constraint and a paired column-sum constraint, the latter inducing an inequality among entries of the matrix pair. We study the enumeration of admixed arrays under these constraints in dense regimes. First, we obtain exact formulas for the sizes of the families defined by each constraint in isolation and derive a finite-size criterion characterizing when one constraint is more restrictive than the other. In the large-dimension limit, this comparison simplifies to an entropy inequality, yielding an information-theoretic interpretation and a quantifiable error bound in the semi-regular case. We then analyze the asymptotic enumeration of the doubly constrained family in a semi-regular setting. Using saddle-point approximation and probabilistic techniques, we derive a detailed asymptotic expansion for the logarithm of the count, isolating an explicit fourth-moment contribution and establishing quantitative control of the higher-order remainder. A consequence of this analysis is a phenomenon absent from classical binary and integer matrix models: in the regime $N=\Theta(P)$ with uniform margins and density bounded away from zero, the two constraint families obey the independence heuristic with a correction factor $1/\sqrt[4]{e}$ rather than the familiar $e^{\pm1/2}$. Numerical experiments corroborate the analytical approximations, and we implement and extend an algorithm of Miller and Harrison (2013) as open-source software to enumerate constrained admixed arrays.
\end{abstract}

%\noindent\textbf{Keywords:} binary matrix; entropy; asymptotic enumeration; saddle-point approximation 

%\noindent\textbf{Mathematics Subject Classification (2010):} 05A16, 

\section{Introduction}

For positive integers $N$ and $P$, let $\mathscr{A}(\mathbf{r},\mathbf{c})$ denote the set of $N\times P$ binary matrices with some prescribed row and column sums, $\mathbf{r}\in\mathbb{N}^N$ and $\mathbf{c}\in\mathbb{N}^P$. Enumerating $\mathscr{A}$ --- equivalently, counting bipartite graphs with a given degree sequence --- is a central topic in combinatorics and probability. In dense regimes, where the prescribed constraints grow proportionally in the dimensions, asymptotic enumeration is guided by maximum-entropy principles or saddle-point methods. A well-known outcome of this line of work is the \emph{independence heuristic}, which states that distinct families of marginal constraints interact asymptotically independently up to an essentially dimension-free correction factor, provided the constraints are \emph{semi-regular}, i.e., uniform along each margin. The heuristic also applies to integer matrices with prescribed margins, and remains valid in sparse regimes and weak perturbations under the dense regime; we summarize representative results below.        

\begin{Prop}[Independence heuristic for binary and integer matrices]\label{prop:indep-heur}
Let $\mathbf{r}=(r_1,\ldots,r_N)$ and $\mathbf{c}=(c_1,\ldots,c_P)$ be the prescribed row and column sums of $\mathbf{A}\in\{0,1\}^{N\times P}$. Let $\lambda=(c_1+\ldots+c_P)/(NP)=(r_1+\ldots+r_N)/(NP)$ be the fraction of entries in the matrix which are $1$, and define the normalization factor $D={NP\choose \lambda NP}$. If $\mathscr{A}_\mathbf{r}$ and $\mathscr{A}_\mathbf{c}$ denote the sets of binary matrices with prescribed row sums $\mathbf{r}$ and prescribed column sums $\mathbf{c}$, then for large $N$ and $P$ with $N=\Theta(P)$ and $\lambda$ bounded away from $0$,
\begin{equation}
|\mathscr{A}|= k_{N,P}D^{-1} |\mathscr{A}_\mathbf{r}||\mathscr{A}_\mathbf{c}|,\label{eq:indep-heur}
\end{equation}
with correction factor $k_{N,P}>0$ satisfying $\lim_{N,P\to\infty} k_{N,P}=1/\sqrt{e}$, provided that (1) $r_1=\cdots=r_N$ and $c_1=\cdots=c_P$ \cite{canfield2005asymptotic}; or (2) $\max_{n\in[N]} r_n \ll P$ and $\max_{p\in[P]} c_p \ll N$ \cite{greenhill2006asymptotic}; or (3) $|r_n-\lambda P|=O(P^{1/2+\varepsilon})$ and $|c_p-\lambda N|=O(N^{1/2+\varepsilon})$ for some $\varepsilon >0$ \cite{canfield2008asymptotic,liebenau2023asymptotic}. For integer matrices with prescribed row and column sums, an analogue of Eq.~\eqref{eq:indep-heur} holds with $\lim_{N,P\to\infty} k_{N,P}=\sqrt{e}$, under similar restrictions to the row and column sum entries \cite{greenhill2008asymptotic,canfield2010asymptotic}.  
\end{Prop}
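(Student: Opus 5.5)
This proposition collects known results, so the proof will translate each cited asymptotic formula into the form \eqref{eq:indep-heur}. Concretely, I will compute $D^{-1}|\mathscr{A}_\mathbf{r}||\mathscr{A}_\mathbf{c}|$ exactly and divide the cited formula for $|\mathscr{A}|$ by it. The counts of the singly constrained families factor over rows and columns:
\[
|\mathscr{A}_\mathbf{r}|=\prod_{n=1}^N\binom{P}{r_n},\qquad |\mathscr{A}_\mathbf{c}|=\prod_{p=1}^P\binom{N}{c_p},
\]
since each row (respectively column) can be filled independently. This gives the explicit product
\[
k_{N,P}=|\mathscr{A}|\binom{NP}{\lambda NP}\Bigl(\prod_n\binom{P}{r_n}\prod_p\binom{N}{c_p}\Bigr)^{-1}.
\]
The remaining task is to show that this ratio tends to $1/\sqrt e$.

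For the dense cases (1) and (3), I will use the Canfield--McKay formula and its Canfield--Greenhill--McKay extension. Writing $A=\tfrac12\lambda(1-\lambda)$, these formulas read
\[
|\mathscr{A}|=\binom{NP}{\lambda NP}^{-1}\prod_n\binom{P}{r_n}\prod_p\binom{N}{c_p}\exp\Bigl(-\tfrac12\bigl(1-\tfrac{R}{2ANP}\bigr)\bigl(1-\tfrac{C}{2ANP}\bigr)+O(n^{-b})\Bigr).
\]
Here $R=\sum_n(r_n-\lambda P)^2$ and $C=\sum_p(c_p-\lambda N)^2$. This is already in the form \eqref{eq:indep-heur}.

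\emph{Case (1).} Semi-regularity gives $R=C=0$, so the exponent is $-\tfrac12+o(1)$ and hence $k_{N,P}\to e^{-1/2}$.

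\emph{Case (3).} The deviation bounds give $R=O(NP^{1+2\varepsilon})$. Dividing by $ANP$ with $\lambda$ bounded away from $0$ and $1$ yields $R/(ANP)=O(P^{2\varepsilon})$. This does not vanish, and it is the main obstacle: the limit $1/\sqrt e$ really holds only when $R,C=o(NP)$. I would therefore state that the correction converges when the deviations are $o(P^{1/2})$, and otherwise cite the precise stated range of the theorem. The Liebenau--Wormald result covers the full range; it gives the same product with the same type of correction factor, which I would again specialize.

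\emph{Case (2), sparse.} Greenhill--McKay--Wang give an asymptotic formula for the count as a product of factorials times $\exp(-\tfrac12\sum\ldots)$. To handle it, I would expand $\binom{NP}{\lambda NP}^{-1}\prod_n\binom{P}{r_n}\prod_p\binom{N}{c_p}$ using Stirling's formula and $\binom{m}{k}\approx m^k/k!\cdot e^{-k^2/(2m)}$. I would then check that the residual exponent matches the cited one, up to terms that vanish in the stated regime. This reduces to comparing quadratic sums of $r_n$ and $c_p$ and is routine but tedious.

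\emph{Integer matrices.} I would repeat the same procedure with binomials replaced by multiset counts, $\binom{P+r_n-1}{r_n}$, and $D$ replaced by $\binom{NP+\lambda NP-1}{\lambda NP}$. The Canfield--McKay formula for contingency tables then yields the exponent $+\tfrac12+o(1)$ under near-uniform margins, giving $k_{N,P}\to\sqrt e$.
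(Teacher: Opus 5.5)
The paper gives no proof of this proposition; it only summarizes the cited literature. Unpacking each cited formula and dividing by the exact product $D^{-1}|\mathscr{A}_\mathbf{r}||\mathscr{A}_\mathbf{c}|=\binom{NP}{\lambda NP}^{-1}\prod_n\binom{P}{r_n}\prod_p\binom{N}{c_p}$ is therefore the natural check. Your treatment of case (1), and of the semi-regular integer case with $D=\binom{NP+\lambda NP-1}{\lambda NP}$, is correct.

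Your diagnosis of case (3) is also correct, and it is a defect of the statement rather than of your argument. Since $k_{N,P}=\exp\left(-\tfrac12\left(1-\tfrac{R}{2ANP}\right)\left(1-\tfrac{C}{2ANP}\right)+o(1)\right)$, deviations of order $\sqrt{P}$ already move the limit. For instance, take regular columns and put half the rows at $\lambda P+\sqrt{P}$ and half at $\lambda P-\sqrt{P}$. Then $R=NP$, $C=0$ and $k_{N,P}=\exp\left(\tfrac12\left(\tfrac{1}{\lambda(1-\lambda)}-1\right)+o(1)\right)\geq e^{3/2+o(1)}$. Likewise, the margins of a typical uniformly random matrix of density $\lambda$ satisfy (3) with $R\approx C\approx 2ANP$, so $k_{N,P}\to1$. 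Restricting to $R,C=o(NP)$ is the right repair. Strictly, what is needed is $\left(1-\tfrac{R}{2ANP}\right)\left(1-\tfrac{C}{2ANP}\right)\to1$, so your ``only when'' is slightly too strong.

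\textbf{The gap is case (2), which you call routine.} It carries exactly the obstruction you found in (3), and your plan does not detect it. Put $S=\lambda NP$, $S_2=\sum_n r_n(r_n-1)=S(S/N-1)+R$ and $T_2=\sum_p c_p(c_p-1)=S(S/P-1)+C$.
\begin{itemize}
\item Relative to $S!/(\prod_n r_n!\prod_p c_p!)$, the leading term of the Greenhill--McKay--Wang exponent is $-S_2T_2/(2S^2)$.
\item Using $\binom{P}{r}\approx\frac{P^r}{r!}e^{-r(r-1)/(2P)}$, the exponent of $D^{-1}|\mathscr{A}_\mathbf{r}||\mathscr{A}_\mathbf{c}|$ is $-\frac{S_2}{2P}-\frac{T_2}{2N}+\frac{S^2}{2NP}$.
\item The difference is $-\tfrac12\left(1-\tfrac{R}{S}\right)\left(1-\tfrac{C}{S}\right)$. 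This is the dense correction again, with $2ANP\sim S$.
\end{itemize}
Here $R/S$ is the variance-to-mean ratio of the row sums, which need not be small. Concretely, take $N=P$ even, row sums alternating $1$ and $3$, and all column sums $2$. For bounded margins the Greenhill--McKay--Wang formula reduces to $\frac{S!}{\prod_n r_n!\prod_p c_p!}e^{-S_2T_2/(2S^2)+o(1)}$, and the binomial expansions are exact up to $o(1)$. One finds $S_2T_2/(2S^2)=3/4$ against a heuristic exponent of $-1/2$, so $k_{N,P}\to e^{-1/4}$, not $e^{-1/2}$.

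So the residual does not ``match up to terms that vanish'' for irregular sparse margins. Case (2) also needs near-regularity, such as $R,C=o(S)$. The same computation with $\binom{P+r-1}{r}\approx\frac{P^r}{r!}e^{r(r-1)/(2P)}$ gives correction $+\tfrac12\left(1-\tfrac{R}{S}\right)\left(1-\tfrac{C}{S}\right)$. So ``near-uniform'' in your integer paragraph means the same quantitative condition.

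Two remarks on the hypotheses:
\begin{itemize}
\item Read literally, case (2) is vacuous. Indeed $\max_n r_n\ll P$ forces $\lambda\leq\max_n r_n/P\to0$, contradicting the standing assumption that $\lambda$ is bounded away from $0$.
\item The statement bounds $\lambda$ away from $0$ but not from $1$, which you assume tacitly. Complementation $\mathbf{A}\mapsto\mathbf{1}_{N\times P}-\mathbf{A}$ preserves $|\mathscr{A}|$, $|\mathscr{A}_\mathbf{r}|$, $|\mathscr{A}_\mathbf{c}|$ and $D$. So $\lambda\to1$ is the sparse regime and must be handled by the sparse or Liebenau--Wormald results, not by the dense formula.
\end{itemize}
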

Other correction factors are known for incidence matrix structures (e.g., regular graphs, uniform hypergraphs and directed graphs), which lie outside the bipartite, rectangular setting considered here (see, e.g., \cite{wormald2018asymptotic}). Given that the classical bipartite and rectangular matrix models all exhibit a $e^{\pm 1/2}$ correction factor whenever $N=\Theta(P)$ and $\lambda=\Theta(1)$, it is natural to treat $e^{\pm 1/2}$ as a benchmark for independence-based asymptotics in dense, semi-regular settings. Here, we show that this benchmark intuition requires refinement. We introduce a class of discrete structures defined by pairs of binary matrices, for which the independence heuristic holds under an analogously dense and semi-regular regime, but with a \emph{different correction factor} of $1/\sqrt[4]{e}$. The departure from the $e^{\pm 1/2}$ benchmark reflects the way the column constraint couples the two matrices, which induces feasibility inequalities among entries of the pair and alters both exact enumeration and the resulting asymptotic correction.

Concretely, our discrete structures are admixed arrays, defined as pairs of $N\times 2P$ binary matrices $[\mathbf{A},\mathbf{X}]$. Let $\mathscr{A}_0(N,P)$ denote the set of all admixed arrays. We introduce two constrained families, $\mathscr{A}_1\subset \mathscr{A}_0$ and $\mathscr{A}_2\subset \mathscr{A}_0$, which are defined by a row sum constraint (\emph{global ancestry}) and a paired column sum constraint (\emph{ancestry-specific allele dosage}). While these constraints are not standard in classical combinatorial models, they arise naturally in analyses of large-scale genetic data. For this reason we postpone their exact definition to Section \ref{sec:preliminaries}, where we explicitly provide statistical- and population-genetic interpretation. We also consider the doubly constrained family $\mathscr{A}_{12}=\mathscr{A}_1\cap \mathscr{A}_2$. 

As a preparatory step, we derive explicit formulas for $|\mathscr{A}_1|$ and $|\mathscr{A}_2|$, yielding a finite-size criterion for comparing the relative restrictiveness of the row-sum and paired column-sum constraints. In the dense regime, this comparison admits a clean entropy approximation with a quantifiable error rate in the semi-regular case. 

\begin{Res}[Informal statement of Theorems \ref{thm:asymp_compare_a1_a2} and \ref{thm:convergence_uniform}]\label{res:1}
Let $H_1$ denote the mean global ancestry entropy and $H_2$ denote the mean ancestry-specific allele fraction entropy; they are functions of the row sum and paired column sum constraints. Let $\overline{f}$ denote the sum of mean ancestry-specific allele fractions, which is a function of the paired column sum constraint. An approximate criterion for $|\mathscr{A}_1| > |\mathscr{A}_2|$ is $H_1 > H_2-\overline{f}$. Moreover, in case the row and paired column sums are uniform, the classification error fraction of the approximate criterion goes to $0$ at a $\sqrt{\log_2 (N)/N + \log_2(P) /P}$ rate.     
\end{Res}

These single-constraint results isolate the combinatorial and entropic effects of each restriction, providing intuition for how the paired column constraint departs from standard margin constraints. With this intuition in hand, we next turn to the asymptotic enumeration of the doubly constrained family $\mathscr{A}_{12}$. %A key distinction of the doubly constrained family is that the paired column constraint couples $\mathbf{A}$ and $\mathbf{X}$, restricting admissible column patterns and thus modifying the effective constraint structure relative to classical margin-constrained matrices. 
Focusing on a semi-regular setting with scaled uniform margins equal to $1/2$, we obtain a detailed expansion using saddle-point approximation and probabilistic techniques. 

\begin{Res}[Informal statement of Theorem \ref{thm:growth-rate-a12}]\label{res:2}
Let the row sum and paired column sum constraints be uniform and equal to $P$ and $N$ respectively. For large $N$ and $P$ with $N=\Theta(P)$, 
\begin{align*}
\log_2|\mathscr{A}_{12}|
&= 2NP
- \frac{1}{2}\left[
N\log_2(\pi P)
+ P\log_2(\pi N)
- \log_2(\pi N P)
\right] \\
&\quad
- \frac{(N+P-1)^2}{8NP}\log_2(e)
+ O\!\left(\frac{1}{\sqrt{\min\{N,P\}}}\right).
\end{align*}
\end{Res}

Our proofs are analytic and probabilistic. Result \ref{res:1} relies on an $\varepsilon$-regularization argument combined with a geometric analysis exploiting the concavity of the entropy function, which may be of independent interest for obtaining error fractions in entropy-based approximate comparisons of constrained combinatorial families. Result \ref{res:2} adapts saddle-point approximation techniques from earlier works to the admixed array setting. Even in the semi-regular $1/2$ case, where the inequality constraint collapses to an equality, the resulting problem still involves extracting a coefficient from a Laurent polynomial and handling a rank-deficient Cauchy integral. We construct invariant transformations of measure to reduce the effective dimension, and apply concentration inequalities and hypercontractivity to obtain the final approximation. In this regime, we show that the independence heuristic $D^{-1}|\mathscr{A}_1||\mathscr{A}_2|$ is asymptotically $\sqrt[4]{e}\cdot|\mathscr{A}_{12}|$, implying a $1/\sqrt[4]{e}$ correction. Finally, we implement and extend an algorithm of \textcite{miller2013exact} to evaluate our approximate criterion and saddle-point approximation, observing good numerical agreement. Our algorithms are available as open-source software.       

\subsection{Previous Work on Constrained Matrix Enumeration}
\label{subsec:lit_review}
The enumeration of row- and column-sum constrained sets of binary matrices, $\mathscr{A}(\mathbf{r},\mathbf{c})$, is a classical problem that dates back to the later half of the 20th century (see, e.g., \cite{litsyn2003capacity,ordentlich2000two,mckay1984asymptotics,mckay1990asymptotic,wormald1980some,good1977enumeration,mineev1976on,bender1974asymptotic,bekessy1972asymptotic,everett1971asymptotic,o1969asymptotics,read1959some}). The Gale-Ryser criterion \cite[Theorem 16.1]{van2001course} determines if such matrices exist (i.e., if $|\mathscr{A}|>0$ given the constraints), but computing $|\mathscr{A}|$ for non-empty sets is more challenging. The independence heuristic was first noted by \textcite{good1977enumeration} and, as described in Proposition \ref{prop:indep-heur}, holds for a wide range of row and column constraints. While the conditions stated in Proposition \ref{prop:indep-heur} imply $\lim_{N,P\to\infty}k_{N,P}\to e^{\pm1/2}$, other uniform regimes lead to different correction factors. These include assuming fixed finite column and row sums while sending the matrix dimensions to infinity \cite{bekessy1972asymptotic}, or more generally sending the dimensions $N$ and $P$ to infinity at different rates while imposing a small enough density \cite{greenhill2006asymptotic}. The independence heuristic also fails in certain cases: for dense binary matrices with carefully constructed \emph{non-uniform} constraints \cite{barvinok2010number,wu2020asymptotic}, the correction factor can be exponentially decaying (e.g., $k_{N,P}\sim 1/2^{\Omega(NP)}$ in Proposition \ref{prop:indep-heur}). Analogously, for dense integer matrices the correction factor can be exponentially large \cite{barvinok2009asymptotic,lyu2022number}. These results also feature a variety of proof techniques, including degree switching and contraction mapping \cite{greenhill2006asymptotic,liebenau2023asymptotic}, Lorentzian polynomials \cite{gurvits2015boolean,branden2023lower}, maximum entropy and typical tables \cite{barvinok2010number,wu2020asymptotic,lyu2022number}, saddle-point approximation \cite{canfield2008asymptotic,canfield2005asymptotic} and its closely related complex martingale theory \cite{isaev2018complex}. Apart from theoretical studies, other works have investigated efficient algorithms for exact or approximate enumeration (\cite{branden2023lower,gurvits2015boolean,miller2013exact,barvinok2010number} and citations therein).

\subsection{Organization of the Paper}
Section \ref{sec:preliminaries} introduces general admixed arrays, defines the constraint families relevant to our present study, and states technical lemmas required to prove our results. Section \ref{sec:single-constraint} analyzes the effects of the row-sum and paired column-sum constraints in isolation and establishes Result \ref{res:1}, setting the stage for doubly constrained enumeration. In Section \ref{sec:results} we prove Result \ref{res:2} and derive the associated independence heuristic. We also describe algorithms we implemented for exact computation, and perform numerical comparisons of our approximations. We conclude with a discussion of future directions.             
\begin{figure}[ht]
\begin{center}
\includegraphics[width=0.95\textwidth]{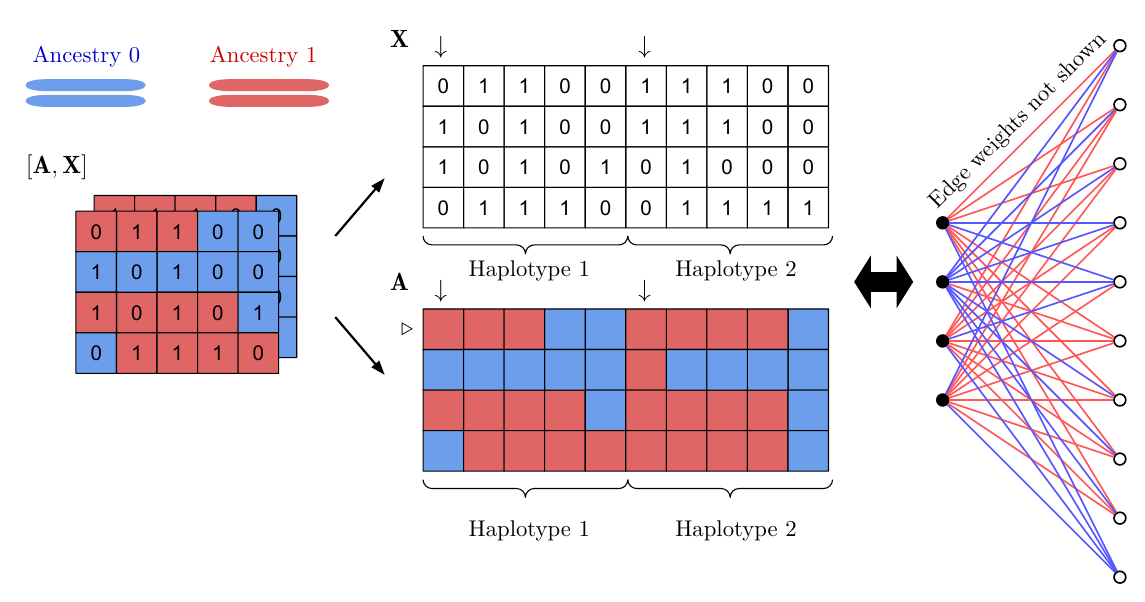}
\end{center}
\caption{An example of a two-way ($\ell=2$) admixed array for diploid individuals ($k=2$) with $N=4$ individuals (rows) and $P=5$ loci (paired columns). The homologous haplotypes, which determine the pairing of columns, are annotated for clarity. Throughout this work, we assign one color to ``Ancestry 0'' and another to ``Ancestry 1'' (in this case, $0=$~blue and $1=$~red) so that global ancestries are determined with respect to Ancestry 1. In this example, the first individual's global ancestry, $\overline{A_{1\cdot}}$, is computed using only information in the local ancestry matrix $\mathbf{A}$ --- specifically, by counting the fraction of red cells along the first row (marked by $\triangleright$), $\overline{A_{1\cdot}}=7/10=0.7$. On the other hand, ancestry-specific allele frequencies for the first locus, $F_{1,0}$ and $F_{1,1}$, are computed using information in both $\mathbf{A}$ and the allele dosage matrix $\mathbf{X}$. Specifically, focusing on just the two columns corresponding to the first locus (these are marked by $\downarrow$), $F_{1,0}=1/2=0.5$ and $F_{1,1}=3/6=0.5$. The corresponding ancestry-specific allele dosages are $\Phi_{1,0}=1$ and $\Phi_{1,1}=3$. On the right, we further show the correspondence of $[\mathbf{A},\mathbf{X}]$ with a weighted edge coloring of a complete bipartite graph, where the black and white nodes correspond to the rows and columns of the admixed array. We omit edge weight labels to improve visualization.}
\label{fig:example}
\end{figure}     

\section{Model and Preliminaries}
\label{sec:preliminaries}

\subsection{Notation}
\label{subsec:math_concepts}

Vectors and matrices are always boldfaced (e.g., $\mathbf{A}$ denotes a matrix and $\mathbf{v}$ is a vector). For any two vectors $\mathbf{v}=(v_1,\ldots,v_P)$ and $\mathbf{w}=(w_1,\ldots,w_P)$ of the same length $P$, we write $\mathbf{v}\geq \mathbf{w}$ when each entry of the left vector is at least as large as the corresponding entry of the right vector (that is, $v_p\geq w_p$ for $p=1,\ldots,P$). Apart from standard notation ($\mathbb{R},\mathbb{N}$ and $[N]=\{1,\ldots,N\}$, etc.), sets are denoted by calligraphic script, as in $\mathscr{A}$, with their sizes denoted either by the insertion of flanking vertical bars (e.g., $|\mathscr{A}|$) if dealing with counting measure or interval lengths, or by using $\text{Vol}(\cdot)$ or $\mu(\cdot)$ otherwise (e.g., $\text{Vol}(\mathscr{B})$ or $\mu(\mathscr{R})$). $\log(\cdot)$ always denotes the base-$2$ logarithm and $\ln(\cdot)$ denotes the natural logarithm. For asymptotics notation, we follow \textcite{graham94}.  

\subsection{Admixed Arrays and Constraints Motivated by Genetic Data Analysis}
\label{subsec:biol_motivation}

In statistical and population genetics, admixed populations are groups of individuals that arise from genetic admixture \cite{korunes2021human,tan2023strategies}. This results in a mosaic of segments within the genome of a recently admixed individual, whereby each locus carries not only allele dosage information but also local ancestry information. Genetic data sequenced from admixed populations are summarized by an \emph{allele dosage matrix} $\mathbf{X}$ and a \emph{local ancestry matrix} $\mathbf{A}$. Both $\mathbf{A}$ and $\mathbf{X}$ have dimension $N\times kP$, where $N$ denotes the number of individuals (rows) in the sample, $P$ denotes the number of loci (disjoint groupings of columns), and $k$ is the ploidy number --- or number of homologous haplotypes per individual --- of the organism. In effect, the $kP$ columns are grouped into $P$ loci, where the $p$th locus corresponds to the column indices $\{p,P+p,\ldots ,(k-1)P+p\}$. Each entry of $\mathbf{X}$ is zero or one, but the number of possible values of each entry of $\mathbf{A}$ is determined by the number of source populations, $\ell\in\mathbb{N}$.  
\begin{Def}[$\ell$-way admixed array]\label{dfn:admix-array}
    For $k,\ell\geq 2$, let $\mathbf{A}=(A_{np})\in\{0,1,\ldots,\ell-1\}^{N\times kP}$ and $\mathbf{X}=(X_{np})\in\{0,1\}^{N\times kP}$ be $N\times kP$ integer matrices. The pair $[\mathbf{A},\mathbf{X}]$ is an $\ell$-way admixed array. 
\end{Def}

This work focuses on $(k,\ell)=(2,2)$, or \emph{two-way admixed arrays for diploid populations}, which will be referred to as admixed arrays for the rest of the paper. An example of an admixed array with $N=4$ individuals (rows) and $P=5$ loci (disjoint pairs of columns) is shown in Figure \ref{fig:example}. From Definition \ref{dfn:admix-array}, we may view an admixed array as a tensor, by stacking $\mathbf{A}$ and $\mathbf{X}$ along a third axis. A more interesting viewpoint is obtained by treating $\mathbf{A}$ as the biadjacency matrix of a bipartite graph $G=(\mathscr{U}\cup\mathscr{V},\mathscr{E})$ with $|\mathscr{U}|=N$ and $|\mathscr{V}|=2P$. Then, $[\mathbf{A},\mathbf{X}]$ can be viewed as a weighted edge coloring of the complete bipartite graph $K_{N,2P}$, where the edges are colored according to their corresponding value in $\mathbf{A}$ and assigned edge weights based on the corresponding value in $\mathbf{X}$. This viewpoint illustrates the close relationship between admixed arrays and classical bipartite graph models. 

Denote by $\mathscr{A}_0(N,P)$ the collection of all admixed arrays parameterized by the dimensions $N$ and $P$. We are interested in the following three scenarios, all of which place constraints on $\mathscr{A}_0$. Figure \ref{fig:example} (caption) calculates the constrained quantities for a simple example. 
\begin{enumerate}
\item \emph{Row local ancestry tally constraint.} For each individual $n$, the quantity $\overline{A_{n\cdot}}=\frac{1}{2P}\sum_{p=1}^{2P}A_{np}$ measures their global ancestry. The distribution of $\overline{A_{n\cdot}}$ is used to visualize contributions of ancestral populations to each individual \cite{alexander2009fast,pritchard2000inference} and to control for confounding in genetic mapping \cite{sun2025opportunities}, while summary statistics of $\{\overline{A_{n\cdot}}:n=1,\ldots,N\}$ are computed in studies of polygenic prediction \cite{aw2025hidden}. Here, we study its scaled integer-valued counterpart, $A_{n\cdot}=2P\times \overline{A_{n\cdot}}\in\mathbb{N}$, which we call the \emph{row local ancestry tally}. For each $n$, $A_{n\cdot}$ is the $n$th row sum of $\mathbf{A}$. We are interested in the size of $\mathscr{A}_1=\mathscr{A}_1(N,P;(a_{1\cdot},\ldots,a_{N\cdot}))=\left\{[\mathbf{A},\mathbf{X}]\in \mathscr{A}_0:~(\forall n\in [N])(A_{n\cdot}=a_{n\cdot})\right\}$.        
\item \emph{Ancestry-specific allele dosage (paired column sum) constraints.} For each locus $p\in\{1,\ldots,P\}$, its ancestry-specific allele frequencies are given by the two quantities, 
\begin{eqnarray}
F_{p,0}&=&\frac{\sum_{n=1}^N \left[(1-A_{np})X_{np}+(1-A_{n(P+p)})X_{n(P+p)}\right]}{\sum_{n=1}^N(2-A_{np}-A_{n(P+p)})},\label{eq:Fp0} \\
F_{p,1}&=&\frac{\sum_{n=1}^N \left(A_{np}X_{np}+A_{n(P+p)}X_{n(P+p)}\right)}{\sum_{n=1}^N (A_{np}+A_{n(P+p)})}.\label{eq:Fp1}
\end{eqnarray}
Eqs.~\eqref{eq:Fp0} and \eqref{eq:Fp1} arise from the use of both parental haplotypes to estimate allele frequencies in practice, and they are useful for interpreting variant effects in clinical genetics \cite{kore2025improved,barberena-jonas2026}. Here, we study their scaled integer-valued counterparts $\Phi_{p,0}=F_{p,0}\sum_{n=1}^N(2-A_{np}-A_{n(P+p)})$ and $\Phi_{p,1}=F_{p,1}\sum_{n=1}^N(A_{np}+A_{n(P+p)})$, which we call the \emph{ancestry-specific allele dosages}. Let $\boldsymbol{\phi}_0=(\phi_{1,0},\ldots,\phi_{P,0})$ and $\boldsymbol{\phi}_1=(\phi_{1,1},\ldots,\phi_{P,1})$ be two integer-valued vectors. We are interested in the size of $\mathscr{A}_2=\mathscr{A}_2(N,P;\boldsymbol{\phi}_0,\boldsymbol{\phi}_1)=\left\{[\mathbf{A},\mathbf{X}]\in\mathscr{A}_0:~(\forall p\in[P])(\Phi_{p,0}=\phi_{p,0} \wedge \Phi_{p,1}=\phi_{p,1})\right\}$.  
\item \emph{Both row local ancestry tally and ancestry-specific allele dosage (row and paired column sum) constraints.} We are also interested in the set of all admixed arrays constrained by the two quantities described earlier, i.e., $\mathscr{A}_{12}=\mathscr{A}_{12}(N,P; \bsphi_0,\bsphi_1,(a_{1\cdot},\ldots,a_{N\cdot}))=\mathscr{A}_1\cap \mathscr{A}_2$.   
\end{enumerate} 

\subsection{Auxiliary Lemmas}

We rely on some self-contained lemmas to prove our results. Their proofs are provided in Appendix \ref{appsec:aux-lemmas} for completeness. They can be skipped on first reading, and we will refer to them whenever they are used in a proof. 

\begin{Lemma}[Properties of the $\theta$ function]
\label{lemma:theta-function}
The function
\begin{equation*}
\theta(f_0,f_1)= f_0\log\frac{1}{f_0} + f_1\log\frac{1}{f_1} + (1-f_0-f_1)\log\left(\frac{1}{1-f_0-f_1}\right) - (f_0+f_1)
\end{equation*}
is strictly concave on the set $\mathscr{D}=\{(f_0,f_1)\in[0,1]^2:f_0+f_1\leq 1\}$, with maximum value $1$ attained at $(1/4,1/4)$.
\end{Lemma}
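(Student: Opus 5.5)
The plan is to read $\theta$ as a three-point entropy minus a linear term. Set $g=1-f_0-f_1$ and $\varphi(x)=x\log\frac{1}{x}$ on $[0,1]$, with $\varphi(0)=0$. Then on $\mathscr{D}$
\begin{equation*}
\theta(f_0,f_1)=\varphi(f_0)+\varphi(f_1)+\varphi(1-f_0-f_1)-(f_0+f_1).
\end{equation*}
The linear term does not affect concavity, so both claims reduce to elementary facts about $\varphi$ plus a stationary-point computation.

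\emph{Strict concavity.} First I will check that $\varphi$ is continuous and strictly concave on the closed interval $[0,1]$. On $(0,1]$ we have $\varphi''(x)=-\log(e)/x<0$, and continuity at $0$ extends strict concavity to $[0,1]$. Next take distinct points $(f_0,f_1)\neq(f_0',f_1')$ in $\mathscr{D}$ and $t\in(0,1)$. Each of the three arguments $f_0$, $f_1$, $1-f_0-f_1$ is an affine function of $(f_0,f_1)$ taking values in $[0,1]$ on $\mathscr{D}$. So each $\varphi$-term satisfies the concavity inequality along the segment. Since the points are distinct, $f_0\neq f_0'$ or $f_1\neq f_1'$, and the corresponding term satisfies the inequality strictly. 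Adding the three terms and the linear part gives strict concavity of $\theta$ on all of $\mathscr{D}$, boundary included. This approach avoids any Hessian degeneracy at the boundary. As a sanity check in the interior, the Hessian is $-\log(e)$ times the matrix with diagonal $1/f_0+1/g$, $1/f_1+1/g$ and off-diagonal $1/g$. That matrix has determinant $\frac{1}{f_0f_1}+\frac{1}{gf_0}+\frac{1}{gf_1}>0$, so the Hessian is negative definite.

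\emph{Maximizer.} In the interior of $\mathscr{D}$, differentiating in base $2$ gives
\begin{equation*}
\partial_{f_0}\theta=\log\frac{g}{f_0}-1,\qquad \partial_{f_1}\theta=\log\frac{g}{f_1}-1.
\end{equation*}
The $\log e$ contributions cancel. Setting both partial derivatives to zero forces $g=2f_0=2f_1$, hence $f_0=f_1=1/4$ and $g=1/2$. A stationary point of a concave function on a convex set is a global maximum, and strict concavity makes it unique. The value there is $\frac14\cdot2+\frac14\cdot2+\frac12\cdot1-\frac12=1$.

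The only mildly delicate step is handling the boundary of $\mathscr{D}$, where $\log$ blows up and the Hessian is undefined. The argument through continuity and strict concavity of $\varphi$ on the closed interval is what takes care of it.
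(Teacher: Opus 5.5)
Your proof is correct. The maximizer step (gradient $\log(g/f_i)-1$, stationary point $(1/4,1/4)$, value $1$) matches the paper's exactly. The two arguments differ only in how concavity is established.

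The paper computes the Hessian on the interior and uses trace $<0$ and determinant $>0$ to get negative definiteness. It then extends $\theta$ continuously to the closed triangle $\mathscr{D}$. That argument gives strict concavity only on the interior. Passing to the closure by continuity transfers just the weak inequality, so strictness along segments lying in the edges of $\mathscr{D}$, which the lemma asserts, is left implicit.

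Your decomposition handles the whole domain at once. You write $\theta=\varphi(f_0)+\varphi(f_1)+\varphi(1-f_0-f_1)-(f_0+f_1)$, with $\varphi$ strictly concave on the closed interval $[0,1]$, and compose it with affine maps, at least one of which separates any two distinct points. This gives strict concavity on all of $\mathscr{D}$, boundary included, and never needs second derivatives where the logarithm blows up.

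What the Hessian route buys is the explicit second-order data at $(1/4,1/4)$: diagonal entries $-6\log(e)$ and off-diagonal entries $-2\log(e)$. The paper effectively reuses exactly this quadratic form for the elliptical approximation of $\mathscr{D}_2$ in the proof of Theorem~\ref{thm:convergence_uniform}.

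One small point in your own argument: continuity at $0$ by itself only transfers the weak concavity inequality to $[0,1]$. Strictness on segments with an endpoint at $0$ needs one more line, for example $\varphi((1-t)y)-(1-t)\varphi(y)=-(1-t)y\log(1-t)>0$ for $y>0$ and $t\in(0,1)$.
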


\begin{Lemma}[Ratio approximation]
\label{lemma:ratio-approx}
Let $\varepsilon>0$ be small. For any positive constant $c$, the quantity $\varepsilon\big/\ln\left[(1/2+\sqrt{c\varepsilon})/(1/2-\sqrt{c\varepsilon})\right]$ is roughly $\frac{1}{2}\sqrt{\varepsilon/c}$.
\end{Lemma}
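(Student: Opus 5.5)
Write $L(\varepsilon)=\ln\bigl[(1/2+\sqrt{c\varepsilon})/(1/2-\sqrt{c\varepsilon})\bigr]$. The plan is to Taylor-expand $L(\varepsilon)$ in the small parameter $\delta=\sqrt{c\varepsilon}$ and then invert. Factoring $1/2$ out of numerator and denominator gives
\begin{equation*}
L(\varepsilon)=\ln\frac{1+2\delta}{1-2\delta}=2\,\mathrm{artanh}(2\delta).
\end{equation*}
For $|2\delta|<1$, which holds once $\varepsilon<1/(4c)$, the series $\mathrm{artanh}(x)=x+x^3/3+x^5/5+\cdots$ converges absolutely. Hence
\begin{equation*}
L(\varepsilon)=4\delta+\tfrac{16}{3}\delta^3+O(\delta^5)=4\sqrt{c\varepsilon}\,\bigl(1+\tfrac{4}{3}c\varepsilon+O(\varepsilon^2)\bigr).
\end{equation*}

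Next, divide $\varepsilon$ by this expansion:
\begin{equation*}
\frac{\varepsilon}{L(\varepsilon)}=\frac{\varepsilon}{4\sqrt{c\varepsilon}}\bigl(1-\tfrac{4}{3}c\varepsilon+O(\varepsilon^2)\bigr)=\frac{1}{4}\sqrt{\varepsilon/c}\,\bigl(1+O(c\varepsilon)\bigr).
\end{equation*}
This makes the phrase ``roughly'' precise: the ratio is asymptotic to its leading term, with relative error $O(c\varepsilon)$. One can also make the error explicit. Using $x\le\mathrm{artanh}(x)\le x/(1-x^2)$ for $0\le x<1$, we get the two-sided bound
\begin{equation*}
\tfrac14\sqrt{\varepsilon/c}\,(1-4c\varepsilon)\le \varepsilon/L(\varepsilon)\le \tfrac14\sqrt{\varepsilon/c}.
\end{equation*}

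The main obstacle is the leading constant. This computation gives $\frac14\sqrt{\varepsilon/c}$, whereas the statement says $\frac12\sqrt{\varepsilon/c}$. Before writing the proof, I would recheck which logarithm the lemma intends. The paper reserves $\ln$ for the natural log and $\log$ for base $2$, so a base-change factor $\ln 2$ could matter in how the lemma is later applied. I would also check whether the intended radicand is $\sqrt{c\varepsilon}/2$ rather than $\sqrt{c\varepsilon}$: with $\delta=\tfrac12\sqrt{c\varepsilon}$ the same computation yields exactly $\frac12\sqrt{\varepsilon/c}$.

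If the statement is read literally, I would prove the $\frac14$ version, since it is what the algebra gives. I would then note that for the paper's use, where only the order $\Theta(\sqrt{\varepsilon/c})$ of the ratio matters (for the $\sqrt{\log N/N}$-type rates), the constant is immaterial. So the proof would assert $\varepsilon/L(\varepsilon)=\Theta(\sqrt{\varepsilon/c})$ with the explicit leading constant above, and flag the factor-of-two discrepancy with the statement.
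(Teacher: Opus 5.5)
Your computation is correct, and the factor-of-two discrepancy you flag is an error in the paper, not a misreading on your part. The paper's proof uses the same idea as yours: Taylor-expand the logarithm, then divide. However, it expands $\ln(\tfrac12\pm h)$ using the coefficients of $\ln(1\pm h)$, and so obtains $\ln(\tfrac12+\sqrt{c\varepsilon})-\ln(\tfrac12-\sqrt{c\varepsilon})\approx 2\sqrt{c\varepsilon}+\tfrac23(c\varepsilon)^{3/2}$. The $k$th derivative of $\ln x$ at $x=\tfrac12$ carries an extra factor $2^k$ compared with $x=1$. The correct expansion is therefore $4\sqrt{c\varepsilon}+\tfrac{16}{3}(c\varepsilon)^{3/2}+\cdots$, which is exactly your $2\,\mathrm{artanh}(2\delta)$. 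A numerical check confirms this: for $c=1$ and $\varepsilon=10^{-4}$, the ratio is $10^{-4}/\ln(0.51/0.49)\approx 0.0025=\tfrac14\sqrt{\varepsilon/c}$, not $0.005$.

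Neither of your alternative readings is needed.
\begin{itemize}
\item The base-$2$ conversion is already handled correctly where the lemma is applied. In the proof of Theorem~\ref{thm:convergence_uniform}, $1/H'(\tfrac12-s)=\ln 2/\ln[(\tfrac12+s)/(\tfrac12-s)]$ with $s=\sqrt{\varepsilon\ln 2}$, i.e.\ $c=\ln 2$.
\item The radicand really is $\sqrt{c\varepsilon}$, not $\tfrac12\sqrt{c\varepsilon}$.
\end{itemize}

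The only downstream effect is on the constant in Theorem~\ref{thm:convergence_uniform}. There $k_1$ should be $\tfrac12\sqrt{\ln 2}\approx 0.416$ rather than $\sqrt{\ln 2}\approx 0.833$. Because that theorem is an upper bound, the paper's larger $K_1=2k_1$ remains valid, only looser, and the $\sqrt{\log N/N+\log P/P}$ rate is unaffected.

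Your plan is the right one: prove the statement with leading constant $\tfrac14$, using your explicit two-sided bound
\[
\tfrac14\sqrt{\varepsilon/c}\,(1-4c\varepsilon)\le \varepsilon/L(\varepsilon)\le\tfrac14\sqrt{\varepsilon/c},
\]
which follows correctly from $x\le\mathrm{artanh}(x)\le x/(1-x^2)$, and flag the discrepancy. That bound is rigorous, which the paper's ``$\approx$'' argument is not.
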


\begin{Lemma}[Quadratic upper bound]
\label{lemma:quadratic-upper-bound}
Let $t\in[-\pi,\pi]$ and $h(t)=4\cos^2(t/2)$. Then $\ln\left(h(t)/4\right) \leq -t^2/4$.
% \begin{equation*}
% \ln\left(\frac{h(t)}{4}\right) \leq -\frac{t^2}{4}.
% \end{equation*}
\end{Lemma}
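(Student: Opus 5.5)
The plan is to reduce the inequality to a one-variable elementary estimate using the half-angle identity. For $t\in[-\pi,\pi]$,
\begin{equation*}
h(t)/4=\cos^2(t/2),
\end{equation*}
so the claim is equivalent to $2\ln\cos(t/2)\leq -t^2/4$ wherever $\cos(t/2)>0$, that is, for $|t|<\pi$. At the endpoints $t=\pm\pi$ we have $h(t)=0$, so the left-hand side equals $-\infty$ under the usual convention and the inequality holds trivially. The function is even in $t$, so it suffices to treat $t\in[0,\pi)$. Substituting $u=t/2\in[0,\pi/2)$, the statement becomes
\begin{equation*}
g(u):=\ln\cos u + \frac{u^2}{2}\leq 0 .
\end{equation*}

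The key step is a monotonicity argument. First, $g(0)=0$. Next, $g'(u)=-\tan u + u$, which is $\leq 0$ on $[0,\pi/2)$ because $\tan u\geq u$ there. That last fact follows since $\tan 0=0$ and $(\tan u)'=\sec^2 u\geq 1$. Hence $g$ is nonincreasing, so $g(u)\leq g(0)=0$, which gives the lemma.

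As an alternative, one could compare power series: $\ln\cos u=-\sum_k c_k u^{2k}$ with all $c_k\geq 0$ and $c_1=1/2$. The derivative argument is cleaner, however, and I would use it. There is essentially no obstacle here. The only points needing care are the endpoint convention at $t=\pm\pi$, where the logarithm is $-\infty$, and the observation that the base of the logarithm is natural ($\ln$), so no $\log_2$ conversion factor enters.
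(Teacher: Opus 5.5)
Your proof is correct and takes essentially the same approach as the paper: the paper also studies $\psi(x)=\ln\cos x + x^2/2$ and shows it is maximized at $0$, arguing via concavity ($\psi''(x)=-\tan^2 x\le 0$ together with $\psi'(0)=0$), whereas you argue via the sign of $\psi'(x)=x-\tan x$ on $[0,\pi/2)$ plus evenness. Your explicit handling of the endpoints $t=\pm\pi$, where the logarithm equals $-\infty$, is a small point the paper's proof glosses over.
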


\begin{Lemma}[Properties of a square matrix]
\label{lemma:covariance-matrix-properties}
For positive integers $N$ and $P$, define the $(N+P-1)\times (N+P-1)$ matrix $\mathbf{B}$ as in Eq.~\eqref{eq:B-matrix}. The following are properties of $\mathbf{B}$.
\begin{enumerate}
\item It is invertible and has determinant $\det(\mathbf{B})=N^{P-1}P^{N-1}$.
\item Let $\mathbf{\Sigma}=2\mathbf{B}^{-1}$. The diagonal entries of $\mathbf{\Sigma}$ are $\Sigma_{n,n} = 2(N+P-1)/NP$ for $n\in[N]$ and $\Sigma_{(N+p),(N+p)}=4/N$ for $p\leq P-1$.
\item Let $\mathbf{\Sigma}$ be defined as in 2. Then the off-diagonal entries of $\mathbf{\Sigma}$ are $\Sigma_{n_1,n_2}=2(P-1)/(NP)$ for distinct indices $n_1,n_2\in[N]$, $\Sigma_{n,N+p}=\Sigma_{N+p,n}=-2/N$ for indices $n\in[N]$ and $p\in[P-1]$, and $\Sigma_{(N+p_1),(N+p_2)}=2/N$ for distinct indices $p_1,p_2\in[P-1]$.
\end{enumerate}
\end{Lemma}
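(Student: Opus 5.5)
The plan is to exploit the block structure of $\mathbf{B}$ and compute everything through a Schur complement. I expect Eq.~\eqref{eq:B-matrix} to be the Hessian of the quadratic form $\sum_{n,p}(\theta_n+\varphi_p)^2$ from the saddle-point analysis, with one column variable fixed ($\varphi_P=0$) to remove the rank deficiency. I will therefore work with
\begin{equation*}
\mathbf{B}=\begin{pmatrix} P\,\mathbf{I}_N & \mathbf{J}_{N\times(P-1)}\\ \mathbf{J}_{(P-1)\times N} & N\,\mathbf{I}_{P-1}\end{pmatrix},
\end{equation*}
where $\mathbf{J}$ denotes an all-ones matrix of the indicated size. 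If the actual definition carries a different overall scaling, every step below rescales trivially. All the computations rest on the identities $\mathbf{J}_{a\times b}\mathbf{J}_{b\times c}=b\,\mathbf{J}_{a\times c}$.

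\textbf{Step 1 (determinant and invertibility).} The upper-left block $P\mathbf{I}_N$ is invertible. Its Schur complement is
\begin{equation*}
\mathbf{S}=N\mathbf{I}_{P-1}-\tfrac{1}{P}\mathbf{J}_{(P-1)\times N}\mathbf{J}_{N\times(P-1)}=N\bigl(\mathbf{I}_{P-1}-\tfrac{1}{P}\mathbf{J}_{P-1}\bigr).
\end{equation*}
The matrix $\mathbf{J}_{P-1}$ has eigenvalues $P-1$ (once) and $0$. Hence $\det(\mathbf{I}-\mathbf{J}/P)=1-(P-1)/P=1/P$. This gives
\begin{equation*}
\det\mathbf{B}=P^N\cdot N^{P-1}\cdot P^{-1}=N^{P-1}P^{N-1}>0,
\end{equation*}
which proves part 1.

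\textbf{Step 2 (inverse of the Schur complement).} By Sherman--Morrison, $(\mathbf{I}-\mathbf{J}/P)^{-1}=\mathbf{I}+c\mathbf{J}$ with $c=(1/P)/(1-(P-1)/P)=1$. So $\mathbf{S}^{-1}=(\mathbf{I}_{P-1}+\mathbf{J}_{P-1})/N$. The block-inverse formula then gives the three remaining blocks of $\mathbf{B}^{-1}$:
\begin{itemize}
\item Lower-right block: $\mathbf{S}^{-1}$. Its diagonal is $2/N$ and its off-diagonal entries are $1/N$. After multiplying by $2$, this is $4/N$ and $2/N$ in $\mathbf{\Sigma}$.
\item Off-diagonal block: $-\tfrac{1}{P}\mathbf{J}\,\mathbf{S}^{-1}=-\tfrac{1}{PN}\mathbf{J}(\mathbf{I}+\mathbf{J})=-\tfrac{1}{PN}(1+(P-1))\mathbf{J}=-\mathbf{J}/N$. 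This yields $\Sigma_{n,N+p}=-2/N$.
\item Upper-left block: $\tfrac1P\mathbf{I}_N+\tfrac{1}{P^2N}\mathbf{J}(\mathbf{I}+\mathbf{J})\mathbf{J}^{\top}=\tfrac1P\mathbf{I}_N+\tfrac{P-1}{PN}\mathbf{J}_N$. This gives diagonal entries $(N+P-1)/(NP)$ and off-diagonal entries $(P-1)/(NP)$. After doubling, these are exactly the values claimed in parts 2 and 3.
\end{itemize}

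\textbf{Step 3 (check).} As a sanity check, I will verify $\mathbf{B}\mathbf{B}^{-1}=\mathbf{I}$ blockwise, which again reduces to the $\mathbf{J}$-product identities.

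\textbf{Main obstacle.} The only real risk is matching the exact form and normalization of Eq.~\eqref{eq:B-matrix}, for example a factor of $2$ or which column index is eliminated. The algebra itself is routine once the block structure is fixed. Reassuringly, the determinant formula and all six claimed entries of $\mathbf{\Sigma}$ are reproduced by the guessed form, which pins down the scaling.
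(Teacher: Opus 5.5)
Your proposal is correct and follows essentially the same route as the paper. Both use the Schur complement of $P\mathbf{I}_N$ with the eigenvalues of the all-ones matrix to get $\det(\mathbf{I}-\mathbf{J}/P)=1/P$, then Sherman--Morrison to get $\mathbf{S}^{-1}=(\mathbf{I}_{P-1}+\mathbf{J}_{P-1})/N$, then the block-inverse formula. Your guessed form of $\mathbf{B}$ is exactly Eq.~\eqref{eq:B-matrix}. The only minor difference is that you obtain the upper-left block directly as $\frac1P\mathbf{I}_N+\frac{1}{P^2}\mathbf{J}\,\mathbf{S}^{-1}\mathbf{J}^{\top}$, whereas the paper inverts the complementary Schur complement $P\mathbf{I}_N-\frac{P-1}{N}\mathbf{J}_N$ by a second Sherman--Morrison step; both give $\frac1P\mathbf{I}_N+\frac{P-1}{NP}\mathbf{J}_N$.
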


\begin{Lemma}[Higher order covariance of Gaussians]
\label{lemma:covariance-gaussian-4th}
Let $(T_1,T_2)$ be jointly Gaussian random variables with zero mean, shared variance $\mathbb{E}[T_1^2]=\mathbb{E}[T_2^2]=\sigma^2$ and covariance $\mathbb{E}[T_1T_2]=\rho\sigma^2$. Then 
\begin{equation*}
\text{Cov}(T_1^4,T_2^4)=\mathbb{E}[T_1^4T_2^4]-\mathbb{E}[T_1^4]\mathbb{E}[T_2^4]=\sigma^8(72\rho^2+24\rho^4).
\end{equation*}
\end{Lemma}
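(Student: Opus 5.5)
The claim follows from an explicit computation. Write $\mathbf{T}=(T_1,T_2)$ in terms of independent standard Gaussians $Z_1,Z_2$:
\[
T_1=\sigma Z_1,\qquad T_2=\sigma\bigl(\rho Z_1+\sqrt{1-\rho^2}\,Z_2\bigr).
\]
This has the prescribed variances $\sigma^2$ and covariance $\rho\sigma^2$. Since $(T_1,T_2)$ is jointly Gaussian, this representation has the correct joint law. The factor $\sigma^8$ then scales out of $\mathbb{E}[T_1^4T_2^4]$ and of $\mathbb{E}[T_1^4]\mathbb{E}[T_2^4]$, so it suffices to take $\sigma=1$.

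The subtracted term is $\mathbb{E}[T_1^4]\mathbb{E}[T_2^4]=3\cdot 3=9$, using the Gaussian fourth moment $\mathbb{E}[Z^4]=3$.

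For the mixed moment, write $s=\sqrt{1-\rho^2}$ and expand
\[
\mathbb{E}\bigl[Z_1^4(\rho Z_1+sZ_2)^4\bigr]=\sum_{j=0}^4\binom{4}{j}\rho^j s^{4-j}\,\mathbb{E}[Z_1^{4+j}]\,\mathbb{E}[Z_2^{4-j}].
\]
Independence factorizes each term, and only even $j$ survive. Using $\mathbb{E}[Z^2]=1$, $\mathbb{E}[Z^4]=3$, $\mathbb{E}[Z^6]=15$ and $\mathbb{E}[Z^8]=105$, the surviving terms are:
\begin{itemize}
\item $j=0$: $3\cdot 3\, s^4 = 9s^4$;
\item $j=2$: $6\rho^2s^2\cdot 15\cdot 1 = 90\rho^2s^2$;
\item $j=4$: $\rho^4\cdot 105 = 105\rho^4$.
\end{itemize}

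Substituting $s^2=1-\rho^2$ gives
\[
9(1-2\rho^2+\rho^4)+90\rho^2-90\rho^4+105\rho^4 = 9+72\rho^2+24\rho^4.
\]
Subtracting $9$ and restoring the factor $\sigma^8$ yields
\[
\text{Cov}(T_1^4,T_2^4)=\sigma^8(72\rho^2+24\rho^4),
\]
which is the claimed identity.

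As a cross-check, Isserlis' theorem counts pairings of the eight factors. Pairings with $k$ cross pairs between the $T_1$ and $T_2$ blocks, where $k\in\{0,2,4\}$, contribute $9$, $72\rho^2$ and $24\rho^4$ respectively. These agree with the three terms above.

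There is no substantive obstacle in this argument; the only care needed is the bookkeeping of the binomial coefficients and the even Gaussian moments.
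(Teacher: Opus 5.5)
Your proof is correct, and your main argument takes a different route from the paper's. The paper applies Isserlis' formula directly to the eight factors $T_1,T_1,T_1,T_1,T_2,T_2,T_2,T_2$ and reads off each coefficient as a pairing count. There are $[(4-1)!!]^2=9$ pairings with no cross pairs, $\binom42\binom42\cdot 2=72$ with two, and $4!=24$ with four. The $9\sigma^8$ term then cancels against $\mathbb{E}[T_1^4]\mathbb{E}[T_2^4]$. This is exactly what you relegate to a cross-check.

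Your argument instead builds the joint law from independent standard Gaussians via $T_2=\sigma(\rho Z_1+\sqrt{1-\rho^2}\,Z_2)$. It then needs only independence and the univariate moments $3,15,105$. The cost is that $72$ and $24$ appear only after cancellation ($-18+90$ and $9-90+105$) once you substitute $s^2=1-\rho^2$. In the pairing count, by contrast, each coefficient is visibly a nonnegative integer attached to $\rho^{\nu}$. That makes it immediate that the covariance is $O(\sigma^8\rho^2)$ for small $\rho$, which is the property the paper actually uses when bounding $\mathrm{Var}_\mu(Q_4)$.

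So your route is more elementary and self-contained, while the Isserlis route is more structural. It also extends with no extra algebra to $\mathrm{Cov}(T_1^{2a},T_2^{2b})$ or to several correlated variables. One minor point: rescaling to $\sigma=1$ presumes $\sigma>0$. The case $\sigma=0$ is trivial, and $|\rho|\le 1$ holds automatically by Cauchy--Schwarz, so your representation is always available.
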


\section{Single-Constraint Enumeration and Structural Comparison}
\label{sec:single-constraint}

We begin by enumerating $\mathscr{A}_1$ and $\mathscr{A}_2$ directly, and it is helpful here to provide some intuition. On the one hand, the row sum constraint of $\mathscr{A}_1$ is straightforward and involves just the row sums of $\mathbf{A}$. On the other hand, the paired column sum constraint of $\mathscr{A}_2$ involves both $\mathbf{A}$ and $\mathbf{X}$, effectively introducing an inequality constraint. To see why, fix a locus $p$, corresponding to the column pair $p$ and $(P+p)$. Using the definitions of $\Phi_{p,0}$ and $F_{p,0}$ (Eq.~\eqref{eq:Fp0}) and the fact that $0\leq X_{np}\leq 1$, it is clear that $0\leq \Phi_{p,0}\leq \sum_{n=1}^N(2-A_{np}-A_{n(P+p)})$, where the upper bound is just the number of zeroes appearing in columns $p$ and $(P+p)$ of $\mathbf{A}$. Hence, if the ancestry-specific allele dosages $(\Phi_{p,0},\Phi_{p,1})$ are equal to $(\phi_{p,0},\phi_{p,1})$, the number of zeroes appearing in columns $p$ and $(P+p)$ of $\mathbf{A}$ must be \emph{at least} $\phi_{p,0}$. Similar reasoning shows that the number of ones appearing in columns $p$ and $(P+p)$ of $\mathbf{A}$ must be at least $\phi_{p,1}$. Finally, the number of ones appearing in columns $p$ and $(P+p)$ of $\mathbf{X}$ is exactly $\sum_{n=1}^N(X_{np}+X_{n(P+p)})=\Phi_{p,0}+\Phi_{p,1}=\phi_{p,0}+\phi_{p,1}$.

\begin{Prop}[Size of $\mathscr{A}_1$]
\label{prop:a1}
For $N,P$ and row local ancestry tallies $(a_{1\cdot},\ldots,a_{N\cdot})$ fixed, the number of admixed arrays satisfying the constraint on row local ancestry tallies is
\begin{equation}
\left|\mathscr{A}_1(N,P;(a_{1\cdot},\ldots,a_{N\cdot}))\right|=\left(\prod_{n=1}^N{2P \choose a_{n\cdot}}\right)2^{2NP}.\label{eq:a1_exact}
\end{equation}
\end{Prop}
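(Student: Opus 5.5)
The count follows from a product decomposition. The row-tally constraint defining $\mathscr{A}_1$ involves only $\mathbf{A}$ and leaves $\mathbf{X}$ completely free. So $\mathscr{A}_1$ is in bijection with the Cartesian product
\[
\mathscr{R}\times\{0,1\}^{N\times 2P},\qquad \mathscr{R}=\left\{\mathbf{A}\in\{0,1\}^{N\times 2P}: (\forall n\in[N])\ A_{n\cdot}=a_{n\cdot}\right\}.
\]
Hence $|\mathscr{A}_1|=|\mathscr{R}|\cdot 2^{2NP}$, since each of the $2NP$ entries of $\mathbf{X}$ is chosen independently from $\{0,1\}$. Here we use that $(k,\ell)=(2,2)$, so $\mathbf{A}$ is binary and $A_{n\cdot}$ is literally the number of ones in row $n$.

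Next, count $\mathscr{R}$. The constraints on distinct rows of $\mathbf{A}$ involve disjoint sets of entries, and there is no column constraint. Therefore $\mathscr{R}$ factors as the product over $n\in[N]$ of the sets of binary row vectors of length $2P$ with exactly $a_{n\cdot}$ ones. Each such set is in bijection with the $a_{n\cdot}$-subsets of $[2P]$ (the positions of the ones), so it has size ${2P\choose a_{n\cdot}}$. Multiplying gives $|\mathscr{R}|=\prod_{n=1}^N{2P\choose a_{n\cdot}}$, and combining with the first step yields Eq.~\eqref{eq:a1_exact}.

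There is no real obstacle. The only point to state explicitly is the convention that ${2P\choose a}=0$ when $a\notin\{0,\ldots,2P\}$. With it, the formula correctly returns $0$ for infeasible tallies, and the product decomposition holds without case distinctions.
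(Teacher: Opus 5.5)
Your proof is correct and matches the paper's argument: $\mathbf{X}$ is unconstrained, giving $2^{2NP}$ choices, and each row of $\mathbf{A}$ is filled independently in ${2P\choose a_{n\cdot}}$ ways. Your note on the convention ${2P\choose a}=0$ for infeasible tallies is a harmless addition that the paper leaves implicit.
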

\begin{proof}
There are no constraints on $\mathbf{X}$, so there are $2^{2NP}$ possible choices of $\mathbf{X}$. For $\mathbf{A}$, there are ${2P\choose a_{n\cdot}}$ ways to assign ones or zeros along the $n$th row $\mathbf{A}_n$, and so there are $\prod_{n=1}^N{2P \choose a_{n\cdot}}$ possible choices of $\mathbf{A}$.
\end{proof}

To obtain the size of $\mathscr{A}_2$, we define a \emph{feasible set}, which encodes the implicit inequality constraint described earlier, imposed by $\bsphi_0$ and $\bsphi_1$ on the column sums of $\mathbf{A}$.

\begin{Def}[Feasible set]\label{def:feasible-set}
Let $\boldsymbol{\phi}_0=(\phi_{1,0},\ldots,\phi_{P,0})$ and $\boldsymbol{\phi}_1=(\phi_{1,1},\ldots,\phi_{P,1})$ be two integer-valued vectors. The feasible set $\mathscr{S}$ for the pair $(\boldsymbol{\phi}_0,\boldsymbol{\phi}_1)$ is defined as the set of vectors satisfying the following coordinate-wise inequalities with the elements of $(\boldsymbol{\phi}_0,\boldsymbol{\phi}_1)$.
\begin{equation}
\mathscr{S}=\{[\mathbf{v}_1,\mathbf{v}_2]\in([0,N]\cap \mathbb{Z})^{2P}:\mathbf{v}_1+\mathbf{v}_2\geq \bsphi_1,2N\mathbf{1}-(\mathbf{v}_1+\mathbf{v}_2)\geq \bsphi_0 \}.\label{eq:feasible-set}
\end{equation}
\end{Def}

\begin{Prop}[Size of $\mathscr{A}_2$]
\label{prop:a2}
For $N,P$ and ancestry-specific allele dosages $\bsphi_0$ and $\bsphi_1$ fixed, the number of admixed arrays satisfying the constraint on ancestry-specific allele dosages is
\begin{align}
\left|\mathscr{A}_2(N,P;\bsphi_0,\bsphi_1)\right| & =&\sum_{[\mathbf{s}_1,\mathbf{s}_2]\in \mathscr{S}} \left[\prod_{p=1}^P{N\choose s_{1p}}{N\choose s_{2p}}{s_{1p}+s_{2p}\choose \phi_{p,1}}{2N-(s_{1p}+s_{2p})\choose \phi_{p,0}}\right] \label{eq:a2_firstexp} \\
&=& \prod_{p=1}^P \left[{2N\choose \phi_{p,0}+\phi_{p,1}}{\phi_{p,0}+\phi_{p,1}\choose\phi_{p,1}}2^{2N-(\phi_{p,0}+\phi_{p,1})}\right],\label{eq:a2_secondexp}
\end{align}
where $\mathscr{S}$ is defined in Eq.~\eqref{eq:feasible-set}.
\end{Prop}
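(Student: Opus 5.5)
The constraint defining $\mathscr{A}_2$ factorizes over loci. For each $p$ it involves only the two column pairs $(p,P+p)$ of $\mathbf{A}$ and of $\mathbf{X}$. I will therefore count, for a single locus, the number of ways to fill the $N\times 2$ blocks of $\mathbf{A}$ and $\mathbf{X}$ at columns $p$ and $P+p$ so that $\Phi_{p,0}=\phi_{p,0}$ and $\Phi_{p,1}=\phi_{p,1}$. Then $|\mathscr{A}_2|$ is the product of these per-locus counts over $p\in[P]$. Each of the two formulas is a way of evaluating this per-locus count, and the step that needs care is showing that they agree.

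For the first expression, Eq.~\eqref{eq:a2_firstexp}, I condition on the column sums $s_{1p}$ and $s_{2p}$ of $\mathbf{A}$ in columns $p$ and $P+p$. There are ${N\choose s_{1p}}{N\choose s_{2p}}$ such choices of the $\mathbf{A}$-block. Given $\mathbf{A}$, the $2N$ cells of the locus split into $s_{1p}+s_{2p}$ cells with ancestry $1$ and $2N-s_{1p}-s_{2p}$ cells with ancestry $0$. The constraint then says exactly that $\mathbf{X}$ has $\phi_{p,1}$ ones among the ancestry-1 cells and $\phi_{p,0}$ ones among the ancestry-0 cells. These choices are independent, giving ${s_{1p}+s_{2p}\choose \phi_{p,1}}{2N-s_{1p}-s_{2p}\choose\phi_{p,0}}$. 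The binomials vanish unless the inequalities in Definition~\ref{def:feasible-set} hold, so summing over $(s_{1p},s_{2p})$ is the same as summing over the feasible set. Multiplying over $p$ and interchanging product and sum (the feasible set is a product set over loci) gives Eq.~\eqref{eq:a2_firstexp}.

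For the second expression, Eq.~\eqref{eq:a2_secondexp}, I count the same per-locus configurations in the opposite order, choosing $\mathbf{X}$ first. Let $m_p=\phi_{p,0}+\phi_{p,1}$.
\begin{enumerate}
\item Choose which $m_p$ of the $2N$ cells carry $X=1$: ${2N\choose m_p}$ ways.
\item Among those $m_p$ cells, choose which $\phi_{p,1}$ have ancestry $A=1$; the rest have $A=0$: ${m_p\choose \phi_{p,1}}$ ways.
\item The remaining $2N-m_p$ cells have $X=0$, and their ancestry is unconstrained: $2^{2N-m_p}$ ways.
\end{enumerate}
This is a bijective reorganization of the same set of $(\mathbf{A},\mathbf{X})$-blocks, so it yields the same per-locus count.

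The main obstacle is checking that this reordering is a genuine bijection. Concretely, $\Phi_{p,1}$ counts the cells with $A=1$ and $X=1$, and $\Phi_{p,0}$ counts the cells with $A=0$ and $X=1$. So the constraint only restricts cells with $X=1$, and cells with $X=0$ carry free ancestry. Making this observation explicit from Eqs.~\eqref{eq:Fp0}--\eqref{eq:Fp1} is what justifies the factor $2^{2N-m_p}$. As a consistency check, one can also derive Eq.~\eqref{eq:a2_secondexp} from Eq.~\eqref{eq:a2_firstexp} algebraically: merge ${N\choose s_1}{N\choose s_2}$ by Vandermonde into ${2N\choose s}$ with $s=s_1+s_2$, then use the identity
\[
\sum_s{2N\choose s}{s\choose \phi_1}{2N-s\choose\phi_0}={2N\choose m}{m\choose\phi_1}2^{2N-m},
\]
which follows from a multinomial rewriting.
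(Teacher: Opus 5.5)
Your proposal is correct and follows the paper's own proof. You obtain Eq.~\eqref{eq:a2_firstexp} by conditioning on the column sums of $\mathbf{A}$ at each locus and then placing the $\phi_{p,1}$ and $\phi_{p,0}$ ones of $\mathbf{X}$ among the ancestry-1 and ancestry-0 cells, and Eq.~\eqref{eq:a2_secondexp} by first placing the ``colored'' ones of $\mathbf{X}$ and leaving the ancestry of the remaining $2N-(\phi_{p,0}+\phi_{p,1})$ cells free; your explicit per-locus factorization with the product/sum interchange over $\mathscr{S}$ and your Vandermonde/multinomial consistency check are correct additions the paper does not spell out.
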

\begin{proof}
We first show Eq.~\eqref{eq:a2_firstexp} holds. As reasoned in the beginning of Section \ref{sec:single-constraint}, the vectors $\bsphi_0$ and $\bsphi_1$ constrain the paired column sums of $\mathbf{A}$ at each locus $p$ in the following way: letting $v_{1p}=\sum_{n=1}^N A_{np}$ and $v_{2p}=\sum_{n=1}^N A_{n(P+p)}$, then $v_{1p}+v_{2p}\geq \phi_{p,1}$ and $2N-(v_{1p}+v_{2p}) \geq \phi_{p,0}$. Additionally, $v_{1p},v_{2p}\in[0,N]$ because $A_{np}$ and $A_{n(P+p)}$ can only take on values $0$ or $1$. Collecting these inequalities across all $P$ loci, we obtain $\mathbf{v}_1+\mathbf{v}_2\geq \bsphi_1$, $2N\mathbf{1}-(\mathbf{v}_1+\mathbf{v}_2)\geq \bsphi_0$ and $[\mathbf{v}_1,\mathbf{v}_2]\in [0,N]^{2P}$, which defines $\mathscr{S}$.   

Picking an element $[\mathbf{s}_1,\mathbf{s}_2]$ from $\mathscr{S}$, we count the number of ways to populate the local ancestry matrix $\mathbf{A}$ with zeros and ones subject to the column sum constraints $\mathbf{s}_1$ and $\mathbf{s}_2$, followed by populating the allele dosage matrix $\mathbf{X}$ with zeros and ones such that the ancestry-specific allele dosages (paired column sums) are exactly $\bsphi_0$ and $\bsphi_1$. For each locus $p$, there are ${N\choose s_{1p}}{N\choose s_{2p}}$ ways to allocate zeros and ones to the $p$th and $(P+p)$th columns of $\mathbf{A}$. For the $p$th and $(P+p)$th columns of $\mathbf{X}$, there are ${s_{1p}+s_{2p}\choose \phi_{p,1}}{2N-(s_{1p}+s_{2p})\choose \phi_{p,0}}$ ways to allocate zeros and ones. To see why, imagine coloring each cell of the $p$th and $(P+p)$th columns of $\mathbf{X}$ blue or red, depending on whether the corresponding cell in the two columns of $\mathbf{A}$ was assigned zero or one. There are exactly $s_{1p}+s_{2p}$ red cells and $2N-(s_{1p}+s_{2p})$ blue cells. It remains to assign $\phi_{p,1}$ cells to have value $1$ amongst the red cells, and to assign $\phi_{p,0}$ cells amongst the blue cells to have value $1$. Therefore, by summing the counts of all admixed arrays $[\mathbf{A},\mathbf{X}]$ over each element of the feasible set $\mathscr{S}$, we obtain Eq.~\eqref{eq:a2_firstexp}. 

Now we show Eq.~\eqref{eq:a2_secondexp} holds, by counting the number of admixed arrays in a different way. First, we populate the allele dosage matrix $\mathbf{X}$ with ``colored'' ones, where the $p$th and $(P+p)$th columns will have exactly $\phi_{p,1}$ red ones and $\phi_{p,0}$ blue ones. There are ${2N\choose \phi_{p,0}+\phi_{p,1}}{\phi_{p,0}+\phi_{p,1}\choose\phi_{p,1}}$ ways to do this. Mapping the red cells to $1$ and blue cells to $0$ in the $p$th and $(P+p)$th columns of $\mathbf{A}$, there are $2N-(\phi_{p,0}+\phi_{p,1})$ cells that can be assigned values zero or one, without restriction. This can be done in $2^{2N-(\phi_{p,0}+\phi_{p,1})}$ ways. Therefore, by taking the product of these two quantities over all $p$ loci, we obtain Eq.~\eqref{eq:a2_secondexp}. 
\end{proof}

\subsection{Exact and Entropy Approximate Criteria}
\label{subsec:size-comparison}

It is not immediately obvious which of $|\mathscr{A}_{1}|$ and $|\mathscr{A}_{2}|$ is larger, so we obtain a necessary and sufficient criterion for $|\mathscr{A}_{1}|>|\mathscr{A}_{2}|$.

\begin{Th}[Necessary and sufficient criterion for $|\mathscr{A}_1|>|\mathscr{A}_2|$]
\label{thm:compare_a1_a2}
Suppose $N$ and $P$ are fixed, and the ancestry-specific allele dosages $\bsphi_0$ and $\bsphi_1$, as well as the local ancestry tallies $\bsa$, are given. For $n=1,\ldots,N$ let $\overline{a_{n\cdot}}=a_{n\cdot}/(2P)$; for $p=1,\ldots,P$ let $f_{p,0}=\phi_{p,0}/(2N)$ and $f_{p,1}=\phi_{p,1}/(2N)$ (not to be confused with $F_{p,0}$ and $F_{p,1}$ in Eqs.~\eqref{eq:Fp0} and \eqref{eq:Fp1}, which are normalized by a different denominator). Then $|\mathscr{A}_{1}|>|\mathscr{A}_{2}|$ if and only if the following inequality holds.
%\begin{align}
%\prod_{n=1}^N{2P\choose a_{n\cdot}} > 2^{-\sum_{p=1}^P(\phi_{p,0}+\phi_{p,1})} \left[\prod_{p=1}^P{2N\choose \phi_{p,0}+\phi_{p,1}}{\phi_{p,0}+\phi_{p,1}\choose \phi_{p,1}}\right].\label{eq:compare_a1_a2_exp}
%\end{align}
\begin{align}
\frac{1}{P}\sum_{p=1}^P(f_{p,0}+f_{p,1}) > \frac{1}{2NP}\left[\sum_{p=1}^P\left(\log{2N\choose 2N(f_{p,0}+f_{p,1})}+\log{2N(f_{p,0}+f_{p,1})\choose 2Nf_{p,1}}\right) - \right. \label{eq:compare_a1_a2_exp}\\ \left.\sum_{n=1}^N \log{2P\choose 2P\overline{a_{n\cdot}}}\right].\nonumber
\end{align}     
\end{Th}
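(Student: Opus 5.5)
The plan is to compare the exact formulas of Propositions \ref{prop:a1} and \ref{prop:a2} directly. The inequality \eqref{eq:compare_a1_a2_exp} should then follow by taking base-$2$ logarithms (recall $\log$ is base $2$) and rearranging. Both $|\mathscr{A}_1|$ and $|\mathscr{A}_2|$ are positive whenever the constraints are feasible, so taking logarithms preserves the strict inequality in both directions. This gives the ``if and only if'' at once, with no loss of equivalence.

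First, I would write $\log|\mathscr{A}_1| = 2NP + \sum_{n=1}^N \log{2P\choose a_{n\cdot}}$ from Eq.~\eqref{eq:a1_exact}. Using the product form \eqref{eq:a2_secondexp}, I would write
\[
\log|\mathscr{A}_2| = \sum_{p=1}^P\left[\log{2N\choose \phi_{p,0}+\phi_{p,1}}+\log{\phi_{p,0}+\phi_{p,1}\choose \phi_{p,1}} + 2N-(\phi_{p,0}+\phi_{p,1})\right].
\]
The terms $\sum_p 2N = 2NP$ cancel against the $2NP$ coming from $2^{2NP}$ in $|\mathscr{A}_1|$. So $|\mathscr{A}_1|>|\mathscr{A}_2|$ is equivalent to
\[
\sum_{p=1}^P(\phi_{p,0}+\phi_{p,1}) > \sum_{p=1}^P\left[\log{2N\choose \phi_{p,0}+\phi_{p,1}}+\log{\phi_{p,0}+\phi_{p,1}\choose \phi_{p,1}}\right] - \sum_{n=1}^N\log{2P\choose a_{n\cdot}}.
\]

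Next, I would substitute $\phi_{p,i}=2Nf_{p,i}$ and $a_{n\cdot}=2P\overline{a_{n\cdot}}$. The left side becomes $2N\sum_p(f_{p,0}+f_{p,1})$. Dividing both sides by the positive quantity $2NP$ gives exactly \eqref{eq:compare_a1_a2_exp}.

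There is no real obstacle here. The one point needing care is that the $2^{2NP}$ factor must cancel exactly against the $2^{2N}$ per-locus factors, leaving only the $-(\phi_{p,0}+\phi_{p,1})$ exponents. I would also note the implicit assumption that the constraint families are nonempty, i.e.\ that $\phi_{p,0}+\phi_{p,1}\le 2N$, so that the binomial coefficients are positive and the logarithms are defined.
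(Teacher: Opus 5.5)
Your proposal is correct and follows the paper's own proof: substitute the exact counts from Propositions~\ref{prop:a1} and \ref{prop:a2} (product form), cancel the $2^{2NP}$ factor, take base-$2$ logarithms, then substitute the normalized constraints and divide by $2NP$. Your remark that the logarithms require both families to be nonempty is a reasonable caveat; the paper leaves it implicit.
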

\begin{proof}
By replacing $|\mathscr{A}_1|$ and $|\mathscr{A}_2|$ with Eqs.~\eqref{eq:a1_exact} and \eqref{eq:a2_secondexp}, and performing simplifications, we obtain the following equivalent inequalities.
\begin{eqnarray*}
|\mathscr{A}_1| > |\mathscr{A}_2| & \Longleftrightarrow & \prod_{n=1}^N{2P \choose a_{n\cdot}} > 2^{-\sum_{p=1}^P(\phi_{p,0}+\phi_{p,1})}\prod_{p=1}^P{2N\choose \phi_{p,0}+\phi_{p,1}}{\phi_{p,0}+\phi_{p,1}\choose \phi_{p,1}} \\
        & \Longleftrightarrow & \sum_{p=1}^P(\phi_{p,0}+\phi_{p,1}) > \sum_{p=1}^P\left(\log{2N\choose \phi_{p,0}+\phi_{p,1}}+\log{\phi_{p,0}+\phi_{p,1}\choose \phi_{p,1}}\right)-\sum_{n=1}^N\log{2P\choose a_{n\cdot}}.
\end{eqnarray*}
Plugging $\phi_{p,0}=2Nf_{p,0},\phi_{p,1}=2Nf_{p,1}$ and $a_{n\cdot}=2P\overline{a_{n\cdot}}$ into the last inequality, and dividing both sides by $2NP$, we recover Eq.~\eqref{eq:compare_a1_a2_exp}.
\end{proof}

Theorem \ref{thm:compare_a1_a2} applies to all parameterizations of marginal constraints and holds for all finite $N$ and $P$. However, if the marginal constraints are ``well-behaved'' in the sense that they do not lie on the boundary of the range of permissible values, then one may derive an approximately necessary and sufficient condition that is more insightful than Eq.~\eqref{eq:compare_a1_a2_exp}. 

To state and prove the approximate condition, we define some relevant quantities. Let $H(z_1,\ldots,z_I)\allowbreak =\sum_{i=1}^I z_i\log(1/z_i)$ denote the binary Shannon entropy function computed on a length $I$ vector $(z_1,\ldots,z_I)$ lying in the $(I-1)$-dimensional simplex for $I\geq 2$, and define 
\begin{equation}
\mathscr{T}(N,P)=\{(\bsa/(2P),\bsphi_0/(2N),\bsphi_1/(2N)))\in[0,1]^{N+2P}: |\mathscr{A}_1|>|\mathscr{A}_2|\},\label{eq:T_NP}
\end{equation}
the set of normalized marginal constraints for which the size of $\mathscr{A}_1$ exceeds the size of $\mathscr{A}_2$. Define the mean global ancestry entropy (row entropy, $H_1$), the mean ancestry-specific allele fraction entropy (column entropy, $H_2$), and the sum of mean ancestry-specific allele fractions ($\overline{f}$) by
\begin{eqnarray}
H_1(\bsa) & = &\frac{1}{N}\sum_{n=1}^NH\left(\overline{a_{n\cdot}},1-\overline{a_{n\cdot}}\right),\label{eq:ave_row_entropy} \\
H_2(\bsphi_0,\bsphi_1) & = &\frac{1}{P}\sum_{p=1}^PH\left(f_{p,0},f_{p,1},1-f_{p,0}-f_{p,1}\right),\label{eq:ave_col_entropy} \\
\overline{f}(\bsphi_0,\bsphi_1) & = & \frac{1}{P}\sum_{p=1}^P\left(f_{p,0}+f_{p,1}\right). \label{eq:f_bar}
\end{eqnarray}
Finally, define
\begin{align}
\widehat{\mathscr{T}(N,P)}=\Bigg\{(\bsa/(2P),\bsphi_0/(2N),\bsphi_1/(2N)))\in[0,1]^{N+2P}: H_1 - H_2 + \overline{f}>0\Bigg\}.\label{eq:T_hat_NP}
\end{align}
The following result shows that the difference between $\mathscr{T}(N,P)$ and $\widehat{\mathscr{T}(N,P)}$ is small for large $N$ and $P$, provided the marginal constraints are well-behaved.

%Assume that their limiting quantities $H_r^{(\infty,\infty)}=\lim_{N,P\to\infty}H_r^{(N,P)}$ and $H_c^{(\infty,\infty)}=\lim_{N,P\to\infty}H_c^{(N,P)}$ exist. 

\begin{Th}[Approximate criterion for $|\mathscr{A}_1|>|\mathscr{A}_2|$]
\label{thm:asymp_compare_a1_a2}
Suppose $N$ and $P$ are fixed, and assume that $\overline{a_{n\cdot}},f_{p,0}$ and $f_{p,1}$ are all contained in $(0,1)$ with $f_{p,0}+f_{p,1}< 1$ as $n$ ranges in $[N]$ and $p$ ranges in $[P]$. With $\mathscr{T}(N,P)$ and $\widehat{\mathscr{T}(N,P)}$ defined in Eqs.~\eqref{eq:T_NP} and \eqref{eq:T_hat_NP}, the set of marginal constraints that lie in their symmetric set difference, $\widehat{\mathscr{T}(N,P)}\triangle \mathscr{T}(N,P)$, is contained in the set of vectors $(\bsa/(2P),\bsphi_0/(2N),\bsphi_1/(2N)))$ for which the quantity $H_1-H_2 + \overline{f}$ lies in a subset of
\begin{align}
\left(-c\left(\frac{\log N}{N}+\frac{\log P}{P}\right),c\left(\frac{\log N}{N}+\frac{ \log P}{P}\right)\right],\label{eq:compare_a1_a2_asymp}
\end{align}
where $c$ is a positive constant that does not depend on $N$ or $P$.  
\end{Th}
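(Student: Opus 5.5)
The plan is to compare the exact criterion of Theorem \ref{thm:compare_a1_a2} with its entropy version, using Stirling-type bounds for binomial coefficients. Write the exact criterion as $\Delta_{N,P}>0$, where
\begin{equation*}
\Delta_{N,P}=\overline{f}-\frac{1}{2NP}\left[\sum_{p=1}^P\left(\log{2N\choose 2N(f_{p,0}+f_{p,1})}+\log{2N(f_{p,0}+f_{p,1})\choose 2Nf_{p,1}}\right)-\sum_{n=1}^N\log{2P\choose 2P\overline{a_{n\cdot}}}\right].
\end{equation*}
The main task is to show $|\Delta_{N,P}-(H_1-H_2+\overline{f})|\le c(\log N/N+\log P/P)$ uniformly over admissible interior constraints. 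Once that holds, the set-theoretic conclusion is immediate. If a constraint vector lies in $\widehat{\mathscr{T}(N,P)}\triangle\mathscr{T}(N,P)$, then $\Delta_{N,P}$ and $H_1-H_2+\overline{f}$ lie on opposite sides of $0$ (one is $>0$, the other $\le0$). Their difference is at most $c(\cdots)$, so $H_1-H_2+\overline{f}$ must lie in the half-open interval \eqref{eq:compare_a1_a2_asymp}.

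\textbf{Step 1 (binomial entropy bounds).} I would use the standard bound, valid for $1\le k\le m-1$:
\begin{equation*}
mH(k/m,1-k/m)-\tfrac12\log m-O(1)\le \log{m\choose k}\le mH(k/m,1-k/m).
\end{equation*}
This follows from Stirling's formula with explicit error terms, or from the elementary inequality ${m\choose k}\ge 2^{mH}/(m+1)$. The version with $\log(m+1)$ in the error has no dependence on $k$, so it stays uniform even near the boundary. The interior assumption in the theorem is used only to guarantee $1\le k\le m-1$, so that the entropies are well defined and the bounds apply. Applied to the row terms with $m=2P$, this gives
\begin{equation*}
\frac{1}{2NP}\sum_n\log{2P\choose a_{n\cdot}}=\frac{1}{N}\sum_n H(\overline{a_{n\cdot}},1-\overline{a_{n\cdot}})+O\!\left(\frac{\log P}{P}\right)=H_1+O\!\left(\frac{\log P}{P}\right).
\end{equation*}

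\textbf{Step 2 (column terms via the chain rule).} Set $g_p=f_{p,0}+f_{p,1}$. Since ${2N\choose 2Ng_p}{2Ng_p\choose 2Nf_{p,1}}$ is the multinomial coefficient ${2N\choose 2Nf_{p,0},\,2Nf_{p,1},\,2N(1-g_p)}$, applying Step 1 to each factor gives
\begin{equation*}
2N\left[H(g_p,1-g_p)+g_pH\!\left(\tfrac{f_{p,1}}{g_p},\tfrac{f_{p,0}}{g_p}\right)\right]+O(\log N).
\end{equation*}
The grouping (chain) rule of entropy identifies the bracket with $H(f_{p,0},f_{p,1},1-f_{p,0}-f_{p,1})$. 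Dividing by $2NP$ and summing over $p$ yields $H_2+O(\log N/N)$. For the second factor the inner count $m=2Ng_p$ may be small. However, the upper bound $\log{m\choose k}\le mH$ is exact, and the lower-bound error $\log(m+1)\le\log(2N+1)$ is still $O(\log N)$, so uniformity is preserved. This is the point to check carefully.

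\textbf{Step 3 (combine).} Put the two estimates into $\Delta_{N,P}$ to get $\Delta_{N,P}=\overline{f}-H_2+H_1+E$ with $|E|\le c(\log N/N+\log P/P)$, where $c$ is an absolute constant (something like $1$ plus slack). Then finish with the sign argument described at the start.

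I expect the main obstacle to be making the Stirling error terms uniform and one-signed enough to get a constant $c$ independent of $N$, $P$, and the constraints. This is especially delicate when the $f$'s are close to the boundary, where the $-\tfrac12\log(2\pi m\,x(1-x))$ correction could blow up. I would handle it by using only the crude but uniform $\log(m+1)$ lower bound rather than the sharp Stirling expansion. The theorem's interval has only an $O(\log/\text{dim})$ width, so the crude bound should suffice.
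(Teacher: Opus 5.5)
Your proof is correct. It follows the same overall plan as the paper: entropy bounds on the binomial coefficients in the exact criterion of Theorem~\ref{thm:compare_a1_a2}, followed by a thin-band sign argument. The difference is the key estimate.

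The paper uses the sharp MacWilliams--Sloane bounds $\sqrt{n/(8k(n-k))}\,2^{nH}\le\binom{n}{k}\le\sqrt{n/(2\pi k(n-k))}\,2^{nH}$. These leave boundary-sensitive terms $\frac12\log\frac{1}{f_{p,0}f_{p,1}(1-f_{p,0}-f_{p,1})}$ and $\frac12\log\frac{1}{\overline{a_{n\cdot}}(1-\overline{a_{n\cdot}})}$. The paper bounds their sums by $P\max_p$ and $N\max_n$ (this is where the interior hypothesis enters). It then builds the sandwich $\widehat{\mathscr{T}(N,P)}^{\text{sub}}\subset\mathscr{T}(N,P)\subset\widehat{\mathscr{T}(N,P)}^{\text{sup}}$ and reads the band off the two set differences.

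You instead use the $k$-free bounds $2^{mH}/(m+1)\le\binom{m}{k}\le 2^{mH}$ together with the grouping rule. This gives $\Delta_{N,P}=(H_1-H_2+\overline{f})+e_C-e_R$ with one-signed errors $0\le e_C\le\log(2N+1)/N$ and $0\le e_R\le\log(2P+1)/(2P)$. Your direct sign argument then produces exactly the half-open interval.

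What your route buys is a genuinely absolute constant. The paper's $\max_p$ step yields an $O(1/N+1/P)$ whose implied constant depends on how close the constraints sit to the boundary. It becomes uniform only after observing $f_{p,i}\ge 1/(2N)$ and $\overline{a_{n\cdot}}\ge 1/(2P)$, which inflates those terms to $O(\log N/N+\log P/P)$; your bounds deliver this directly. In return, the paper's sharper bound keeps an explicit lower-order shift of the threshold, which is finer than the statement needs.

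One caveat, which the paper's argument shares: $\log(2P+1)/(2P)$ is not $O(\log P/P)$ at $P=1$, and this is a real counterexample, not an artifact of the bounds. The hypotheses allow $P=1$ with every $a_{n\cdot}=1$. Then the row term equals $\frac12$ while $H_1=1$, so $\Delta_{N,1}=(H_1-H_2+\overline{f})-\frac12+e_C$. Constraints with $H_2-\overline{f}$ near $\frac34$ therefore lie in the symmetric difference with $H_1-H_2+\overline{f}\approx\frac14$ for all large $N$, outside the claimed band. You should assume $P\ge 2$; the hypotheses already force $N\ge 2$, since $\phi_{p,0},\phi_{p,1}\ge1$ and $\phi_{p,0}+\phi_{p,1}\le 2N-1$. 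With $N,P\ge2$, an explicit constant such as $c=\log 5$ suffices in your argument.
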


\begin{proof}
We rely on a classical information-theoretic bound on the binomial coefficient (p.~309 of\linebreak \textcite{macwilliams1977theory}):
\begin{equation}
\sqrt{\frac{n}{8k(n-k)}} 2^{nH(k/n, 1-k/n)} \leq {n\choose k} \leq \sqrt{\frac{n}{2\pi k(n-k)}} 2^{nH(k/n,1-k/n)},\hspace{1cm}0<k<n.\label{eq:info-theoretic}
\end{equation}
Taking base-$2$ logarithms on Eq.~\eqref{eq:info-theoretic}, we obtain
\begin{equation}
\log{n\choose k} = nH\left(\frac{k}{n},1-\frac{k}{n}\right) + \frac{1}{2}\log n -\frac{1}{2}(\log k+\log(n-k)) - c, \label{eq:log-info-theoretic}
\end{equation}
where $c\in[\log(2\pi)/2,3/2]$. Applying Eq.\eqref{eq:log-info-theoretic} to each binomial coefficient appearing on the right hand side (RHS) of Eq.~\eqref{eq:compare_a1_a2_exp} yields
\begin{eqnarray*}
\log{2N\choose 2N(f_{p,0}+f_{p,1})} + \log{2N(f_{p,0}+f_{p,1})\choose 2Nf_{p,1}} & \in & \left[\tau_p-4,\tau_p-(2+\log\pi)\right],  \\
\log{2P \choose 2P\overline{a_{n\cdot}}} & \in & \left[\xi_n-2,\xi_n-\frac{1}{2}(2+\log\pi)\right],
\end{eqnarray*}
where the quantities $\tau_p$ and $\xi_n$ are defined as
\begin{eqnarray*}
\tau_p & = & 2N H(f_{p,0},f_{p,1},1-f_{p,0}-f_{p,1})-\frac{1}{2}\log N+\frac{1}{2}\log\left(\frac{1}{f_{p,0}f_{p,1}(1-f_{p,0}-f_{p,1})}\right), \\
\xi_n & = & 2PH(\overline{a_{n\cdot}},1-\overline{a_{n\cdot}})-\frac{1}{2}\log P + \frac{1}{2}\log\left(\frac{1}{\overline{a_{n\cdot}}(1-\overline{a_{n\cdot}})}\right).
\end{eqnarray*}
Thus, 
\begin{equation}
\frac{1}{2NP}\left[\sum_{p=1}^P \tau_p - \sum_{n=1}^N\xi_n\right]-\frac{4}{N} + \frac{2+\log\pi}{4P}\leq \text{RHS of Eq.~\eqref{eq:compare_a1_a2_exp}} \leq \frac{1}{2NP}\left[\sum_{p=1}^P \tau_p - \sum_{n=1}^N\xi_n\right]-\frac{2+\log\pi}{N} + \frac{1}{P},\label{eq:bound-rhs}
\end{equation}
with
\begin{equation*}
\begin{aligned}
\frac{1}{2NP}\Biggl[\sum_{p=1}^P \tau_p - \sum_{n=1}^N \xi_n\Biggr]
&= H_2 - H_1
 + \frac{1}{4}\Biggl(\frac{\log P}{P} - \frac{\log N}{N}\Biggr) \\
&\quad + \frac{1}{4NP}\Biggl[
\sum_{p=1}^P \log\!\Biggl(\frac{1}{f_{p,0}f_{p,1}\bigl(1-f_{p,0}-f_{p,1}\bigr)}\Biggr)
-\sum_{n=1}^N \log\!\Biggl(\frac{1}{\overline{a_{n\cdot}}\bigl(1-\overline{a_{n\cdot}}\bigr)}\Biggr)
\Biggr].
\end{aligned}
\end{equation*}
Define the following sets,
\begin{equation*}
\begin{aligned}
\widehat{\mathscr{T}(N,P)}^{\text{sub}}
&=\Bigg\{\bigl(\bsa/(2P),\bsphi_0/(2N),\bsphi_1/(2N)\bigr)\in[0,1]^{N+2P}:\;
H_1 - H_2 + \overline{f} > \\
&\qquad \frac{1}{4NP}\Biggl[
\sum_{p=1}^P \log\!\Biggl(\frac{1}{f_{p,0}f_{p,1}\bigl(1-f_{p,0}-f_{p,1}\bigr)}\Biggr)
-\sum_{n=1}^N \log\!\Biggl(\frac{1}{\overline{a_{n\cdot}}\bigl(1-\overline{a_{n\cdot}}\bigr)}\Biggr)
\Biggr] \\
&\qquad + \left(\frac{\log P}{4P} -\frac{\log N}{4N}\right)
+\frac{1}{P}-\frac{2+\log\pi}{N}
\Bigg\}, \\[0.5em]
\widehat{\mathscr{T}(N,P)}^{\text{sup}}
&=\Bigg\{\bigl(\bsa/(2P),\bsphi_0/(2N),\bsphi_1/(2N)\bigr)\in[0,1]^{N+2P}:\;
H_1 - H_2 + \overline{f} > \\
&\qquad \frac{1}{4NP}\Biggl[
\sum_{p=1}^P \log\!\Biggl(\frac{1}{f_{p,0}f_{p,1}\bigl(1-f_{p,0}-f_{p,1}\bigr)}\Biggr)
-\sum_{n=1}^N \log\!\Biggl(\frac{1}{\overline{a_{n\cdot}}\bigl(1-\overline{a_{n\cdot}}\bigr)}\Biggr)
\Biggr] \\
&\qquad + \left(\frac{\log P}{4P} -\frac{\log N}{4N}\right)
+\frac{2+\log\pi}{4P}-\frac{4}{N}
\Bigg\}.
\end{aligned}
\end{equation*}

From Eq.~\eqref{eq:bound-rhs} we get $\hat{\mathscr{T}}^{\text{sub}}\subset \mathscr{T}\subset\hat{\mathscr{T}}^{\text{sup}}$, where $\mathscr{T}$ is defined in Eq.~\eqref{eq:T_NP}. Moreover, the quantity $\sum_{p=1}^P\log(1/[f_{p,0}f_{p,1}(1-f_{p,0}-f_{p,1})])$ is bounded between $P\cdot\min_p \{\log(1/[f_{p,0}f_{p,1}(1-f_{p,0}-f_{p,1})])\}$ and $P\cdot\max_p \{\log(1/[f_{p,0}f_{p,1}(1-f_{p,0}-f_{p,1})])\}$, which are both finite since $f_{p,0},f_{p,1}$ and $f_{p,0}+f_{p,1}$ lie in $(0,1)$. This shows that $\sum_{p=1}^P\log(1/[f_{p,0}f_{p,1}(1-f_{p,0}-f_{p,1})])=O(P)$. Similar reasoning shows that $\sum_{n=1}^N\log(1/[\overline{a_{n\cdot}}(1-\overline{a_{n\cdot}})])=O(N)$. From set-theoretic arguments, we have 
\begin{equation*}
\mathscr{T}\triangle\hat{\mathscr{T}} =(\mathscr{T}\setminus\hat{\mathscr{T}})\cup(\hat{\mathscr{T}}\setminus\mathscr{T})\subset (\hat{\mathscr{T}}^\text{sup}\setminus\hat{\mathscr{T}})\cup(\hat{\mathscr{T}}\setminus\hat{\mathscr{T}}^\text{sub}).
\end{equation*}
Thus, $\mathscr{T}\triangle\hat{\mathscr{T}}$ is contained in the union of two sets, defined by the collection of vectors $(\overline{a_{1\cdot}},\ldots,\overline{a_{N\cdot}},f_{1,0},\ldots,\linebreak  f_{P,0},f_{1,1},\ldots,f_{P,1})=(\bsa/(2P),\bsphi_0/(2N),\bsphi_1/(2N)))\in[0,1]^{N+2P}$ satisfying
\begin{align*}
H_1 - H_2 + \overline{f} \in \left(O\left(\frac{1}{N}+\frac{1}{P}\right)+\left(\frac{\log P}{4P} -\frac{\log N}{4N}\right)+\frac{2+\log\pi}{4P}-\frac{4}{N},0\right]\cup\\
\left(0,O\left(\frac{1}{N}+\frac{1}{P}\right)+\left(\frac{\log P}{4P} -\frac{\log N}{4N}\right)+\frac{1}{P}-\frac{2+\log\pi}{N}\right].
\end{align*}
The non-zero endpoints of both intervals are dominated by $\log(N)/N+\log(P)/P$, so there exists a positive constant $c$ such that their union is a subset of the interval $(-c(\log(N)/N+\log(P)/P),c(\log(N)/N+\log(P)/P)]$.
\end{proof}

It is clear from Eq.~\eqref{eq:compare_a1_a2_asymp} that as $N,P\to\infty$, the difference between $\mathscr{T}$ and $\hat{\mathscr{T}}$ approaches the empty set. Therefore, Theorem \ref{thm:asymp_compare_a1_a2} establishes the sign of the quantity $H_1-H_2+\overline{f}$, which can be computed directly from the constraints $\bsa,\bsphi_0$ and $\bsphi_1$, as an approximate criterion for large $N$ and large $P$. This quantity has an information-theoretic interpretation: between the sets $\mathscr{A}_1$ and $\mathscr{A}_2$, $\mathscr{A}_1$ is larger if and only if the entropy across the row constraints exceeds the entropy across the column constraints minus the sum of ancestry-specific fractions in the allele dosage matrix.   

We state a special case whereby the marginal constraints are uniform, or ``semi-regular'', which facilitates visualization of our exact and approximate criteria. Specifically, we let $\bsa=(a,\ldots,a)$ for some constant $a\in\{1\ldots,2P-1\}$, and $\bsphi_0=(\phi_0,\ldots,\phi_0)$ and $\bsphi_1=(\phi_1,\ldots,\phi_1)$ for some constants $\phi_0,\phi_1\in \{1,\ldots,2N-1\}$ subject to the constraint $0< \phi_0+\phi_1< 2N$. We similarly define the normalized marginal constraints $\overline{a}=a/(2P), f_0=\phi_0/(2N)$ and $f_1=\phi_1/(2N)$, which also lie in $(0,1)$ but no longer depend on the indices $n$ and $p$. We can now restate Theorems \ref{thm:compare_a1_a2} and \ref{thm:asymp_compare_a1_a2} for this special case. 

\begin{Cor}[Semi-regular criteria for $|\mathscr{A}_1|>|\mathscr{A}_2|$]
\label{cor:compare_a1_a2_uniform}
Suppose $N$ and $P$ are fixed and the marginal constraints are uniform, with ancestry-specific allele dosages all equal to $\phi_0$ and $\phi_1$, as well as the local ancestry tallies all equal to $a$. Let $f_0,f_1$ and $\overline{a}$ denote the normalized marginal constraints. Then $|\mathscr{A}_{1}|>|\mathscr{A}_{2}|$ if and only if the following inequality holds.
\begin{align}
\frac{1}{2P}\log{2P\choose 2P\overline{a}}-\frac{1}{2N}\left[\log{2N\choose 2N\left(f_0+f_1\right)} + \log{2N\left(f_0+f_1\right)\choose 2Nf_1}\right] + (f_0+f_1) > 0.\label{eq:compare_a1_a2_uniform}
\end{align}
Assume that the corresponding large $N,P$ limits --- $f_0^{(\infty,\infty)}=\lim_{N,P\to\infty} f_0, f_1^{(\infty,\infty)}=\lim_{N,P\to\infty} f_1$ and $\overline{a}^{(\infty,\infty)}=\lim_{N,P\to\infty}\overline{a}$ --- exist, and that $f_0^{(\infty,\infty)},f_1^{(\infty,\infty)},f_0^{(\infty,\infty)}+f_1^{(\infty,\infty)}$ and $\overline{a}^{(\infty,\infty)}$ lie in $(0,1)$. Then, as $N,P\to\infty$, $|\mathscr{A}_1|>|\mathscr{A}_2|$ if and only if the following inequality holds.
\begin{align}
H\left(\overline{a}^{(\infty,\infty)},1-\overline{a}^{(\infty,\infty)}\right) - H\left(f_0^{(\infty,\infty)},f_1^{(\infty,\infty)},1-f_0^{(\infty,\infty)}-f_1^{(\infty,\infty)}\right)+\left(f_0^{(\infty,\infty)}+f_1^{(\infty,\infty)}\right)>0,\label{eq:largeNlargeP_a_1_a2_uniform}
\end{align}
where $H$ denotes the binary Shannon entropy function. 
\end{Cor}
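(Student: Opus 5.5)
The corollary has two parts, and I would prove them in order.

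\textbf{Exact criterion.} This is a direct specialization of Theorem~\ref{thm:compare_a1_a2}. Substitute $\overline{a_{n\cdot}}=\overline{a}$, $f_{p,0}=f_0$ and $f_{p,1}=f_1$ into Eq.~\eqref{eq:compare_a1_a2_exp}. The sum over $p$ on the right-hand side becomes $P$ times a single term, and the sum over $n$ becomes $N$ times a single term. Dividing by $2NP$ then gives
\begin{equation*}
\frac{1}{2N}\left[\log{2N\choose 2N(f_0+f_1)}+\log{2N(f_0+f_1)\choose 2Nf_1}\right]-\frac{1}{2P}\log{2P\choose 2P\overline{a}}.
\end{equation*}
The left-hand side of Eq.~\eqref{eq:compare_a1_a2_exp} becomes $f_0+f_1$. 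Moving everything to one side yields Eq.~\eqref{eq:compare_a1_a2_uniform}, and the equivalence with $|\mathscr{A}_1|>|\mathscr{A}_2|$ is inherited from the theorem.

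\textbf{Limiting criterion.} Let $L_{N,P}$ denote the left-hand side of Eq.~\eqref{eq:compare_a1_a2_uniform}. I would apply Eq.~\eqref{eq:log-info-theoretic} to each of the three binomial coefficients, exactly as in the proof of Theorem~\ref{thm:asymp_compare_a1_a2}. This gives
\begin{equation*}
L_{N,P}=H(\overline{a},1-\overline{a})-H(f_0,f_1,1-f_0-f_1)+(f_0+f_1)+R_{N,P},
\end{equation*}
with $|R_{N,P}|\le C\bigl(\log N/N+\log P/P\bigr)$. The constant $C$ must be uniform, and this is where the hypotheses enter. Since the limits lie in the open intervals, for large $N,P$ the quantities $f_0$, $f_1$, $1-f_0-f_1$ and $\overline{a}$ are bounded away from $0$. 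Hence the terms $\tfrac12\log(1/[f_0f_1(1-f_0-f_1)])$ and $\tfrac12\log(1/[\overline{a}(1-\overline{a})])$ stay bounded. The entropy function $H$ is continuous on the simplex, so the main term converges to the left-hand side $L_\infty$ of Eq.~\eqref{eq:largeNlargeP_a_1_a2_uniform}, and $R_{N,P}\to0$. Therefore $L_{N,P}\to L_\infty$.

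\textbf{Main obstacle.} The delicate step is making the phrase ``as $N,P\to\infty$, if and only if'' precise. Convergence $L_{N,P}\to L_\infty$ only settles the sign when $L_\infty\neq0$. If $L_\infty>0$, then $L_{N,P}>0$ for all large $N,P$, so $|\mathscr{A}_1|>|\mathscr{A}_2|$ eventually. If $L_\infty<0$, then $|\mathscr{A}_1|\le|\mathscr{A}_2|$ eventually. I would therefore state and prove the corollary in this eventual sense. I would also note that the boundary case $L_\infty=0$ is exactly the ambiguity band of Eq.~\eqref{eq:compare_a1_a2_asymp}, in which the sign depends on the $O(\log N/N+\log P/P)$ remainder. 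That case is left undetermined, consistent with Theorem~\ref{thm:asymp_compare_a1_a2}.
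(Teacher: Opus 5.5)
Your proposal is correct and follows the paper's route: the paper offers no separate proof and presents this corollary as the uniform-margin specialization of Theorem~\ref{thm:compare_a1_a2} (giving Eq.~\eqref{eq:compare_a1_a2_uniform}), combined with the binomial--entropy estimates from the proof of Theorem~\ref{thm:asymp_compare_a1_a2} (giving Eq.~\eqref{eq:largeNlargeP_a_1_a2_uniform}). Your explicit continuity step and your restriction of the limiting ``if and only if'' to $L_\infty\neq 0$ add rigor the paper omits, and that restriction is genuinely necessary: when $L_\infty=0$ the sign is decided by the $O(\log N/N+\log P/P)$ remainder, which is exactly the ambiguity band of Eq.~\eqref{eq:compare_a1_a2_asymp}.
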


\begin{figure}[ht]
\begin{center}
\includegraphics[width=\textwidth]{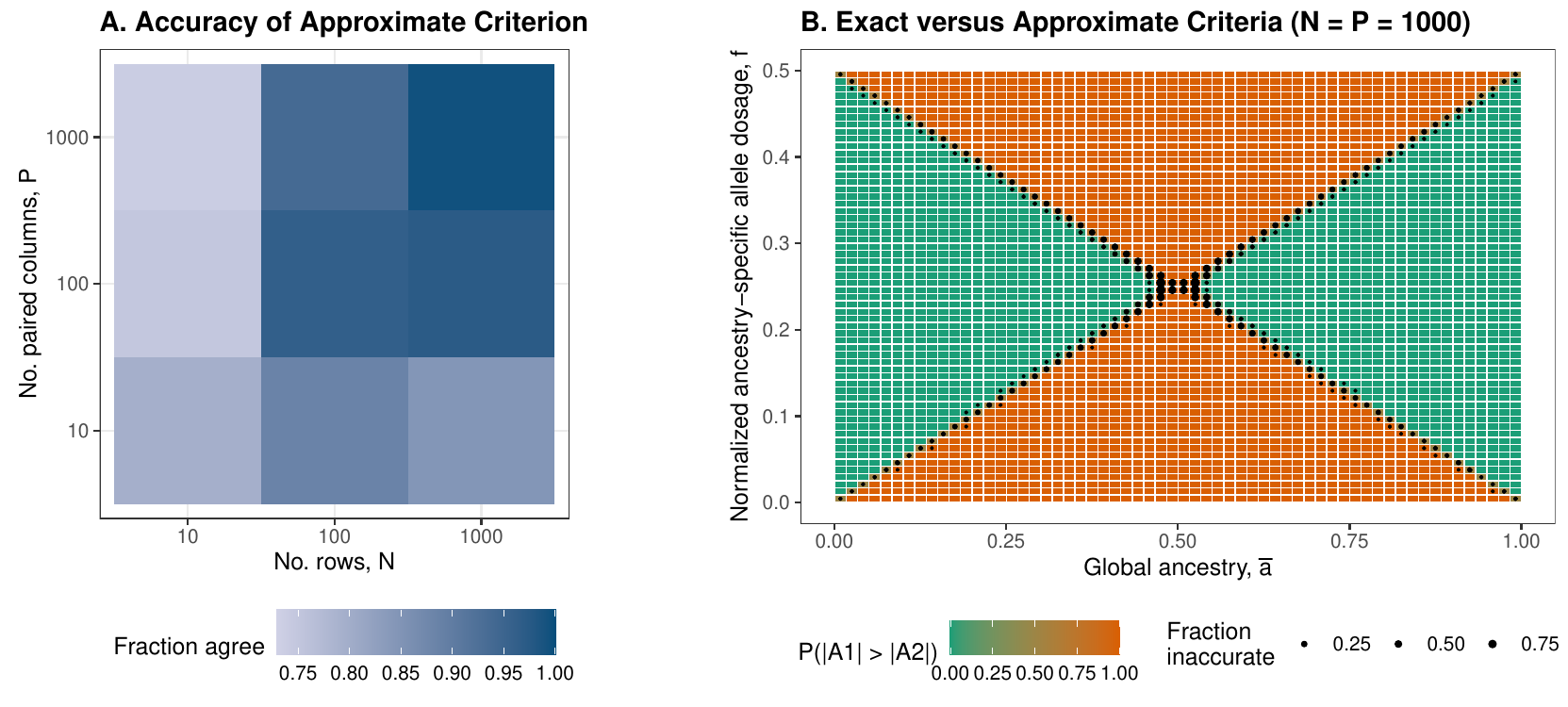}
\end{center}
\caption{Visualizing Corollary \ref{cor:compare_a1_a2_uniform}. \textbf{A.} Heat map of the fraction of predictions obtained from applying the entropy approximate criterion (Eq.~\eqref{eq:largeNlargeP_a_1_a2_uniform}) that agree with the true classification obtained by direct calculation of $|\mathscr{A}_1|$ and $|\mathscr{A}_2|$. Fractions range from $0.74$ ($(N,P)=(10,10^3)$) to $0.99$ ($(N,P)=(10^3,10^3)$). \textbf{B.} Heat map showing ranges of values of the pair of quantities $(\overline{a},f)$ for which $\mathscr{A}_1$ is larger than $\mathscr{A}_2$, when $N=P=1000$ and $\overline{a}$ and $f$ are allowed to range within their discrete domains. To improve visualization, the roughly $2\times 10^6$ choices of $(\overline{a},f)$ are partitioned into $3,600$ square bins, with each bin reporting the fraction of $(\overline{a},f)$ pairs --- denoted $P(|\mathscr{A}_1|>|\mathscr{A]_2|})$ --- for which $|\mathscr{A}_1| > |\mathscr{A}_2|$. Black points identify bins for which the approximate bound (Eq.~\eqref{eq:largeNlargeP_a_1_a2_uniform}) predicted the larger set inaccurately, with larger points indicating a greater fraction of inaccurate predictions within the bin.}
\label{fig:entropy_approx}
\end{figure}

To visualize Corollary \ref{cor:compare_a1_a2_uniform}, we set $f_0=f_1=f$ and allow $f$ and $\overline{a}$ to range over their discrete domains, $\{j/2N:j=1,\ldots,N-1\}$ and $\{i/2P:i=1,\ldots,2P-1\}$ respectively. We then use Eq.~\eqref{eq:compare_a1_a2_uniform} (ground truth) and Eq.~\eqref{eq:largeNlargeP_a_1_a2_uniform} (approximation) to determine whether $\mathscr{A}_1$ is larger than $\mathscr{A}_2$ for each pair $(\overline{a},f)$. Figure \ref{fig:entropy_approx} shows that the fraction of points for which the approximation is inaccurate (i.e., Eq.~\eqref{eq:largeNlargeP_a_1_a2_uniform} classifying $\mathscr{A}_1$ as larger when in fact $\mathscr{A}_2$ is, according to Eq.~\eqref{eq:compare_a1_a2_uniform}) approaches $0$, with a \emph{small but modest} error fraction of $0.01$ for $N=P=10^3$ (Figure \ref{fig:entropy_approx}A). Inaccurate predictions of the larger set occur along the diagonals of the rectangular domain $[0,1]\times [0,0.5]$ in which the quantities $(\overline{a},f)$ range, corresponding to instances where the quantities on both sides of the bound of Eq.~\eqref{eq:largeNlargeP_a_1_a2_uniform} are numerically very close (Figure \ref{fig:entropy_approx}B). This last observation reflects Theorem \ref{thm:asymp_compare_a1_a2}, which states that the region of inaccurate prediction lies in a set for which the difference between the entropy expressions is very small (Eq.~\eqref{eq:compare_a1_a2_asymp}). To explain the first observation, we next show that the fraction of points lying in the symmetric difference $\widehat{\mathscr{T}(N,P)}\triangle \mathscr{T}(N,P)$ --- as described in Theorem \ref{thm:asymp_compare_a1_a2} --- \emph{decays moderately}, behaving like $\sqrt{\log(N)/N + \log(P)/P}$. To obtain this result, we approximate the discrete domains by their continuous counterparts and drop the constant $c$ appearing in Eq.~\eqref{eq:compare_a1_a2_asymp}. Specifically, let $\mathscr{B}_0\subset [0,1]^3$ denote the set of points $(\overline{a},f_0,f_1)$ for which $1>\overline{a},f_0,f_1>0$ and $f_0+f_1<1$, and let $\mathscr{B}\subset \mathscr{B}_0$ denote the subset of points for which $H\left(\overline{a},1-\overline{a}\right) - H\left(f_0,f_1,1-f_0-f_1\right)+\left(f_0+f_1\right)\in \left(-(\log(N)/N+\log(P)/P),\log(N)/N+\log(P)/P\right]$. Then $\text{Vol}(\mathscr{B})/\text{Vol}(\mathscr{B}_0)$ is an approximate upper bound of the fraction of points lying in $\widehat{\mathscr{T}(N,P)}\triangle \mathscr{T}(N,P)$. This approximation is reasonable, because the uniform measure on a bounded domain of equally spaced points converges weakly to the Lebesgue measure over the domain as the spacing approaches $0$. %Finally, our proof is particular to the semi-regular case only to avoid having the essential arguments be buried under notational excess; it generalizes to all parameterizations of the constraints.                 

\begin{Th}[Semi-regular classification error fraction]
\label{thm:convergence_uniform}
Let $\mathscr{B}_0$ and $\mathscr{B}$ be defined as in the preceding paragraph. Then, there exist universal positive constants $K_1$ and $K_2$ such that the following inequality holds. 
\begin{align}
\frac{\text{Vol}(\mathscr{B})}{\text{Vol}(\mathscr{B}_0)} \leq K_1\sqrt{\frac{\log N}{N} + \frac{\log P}{P}} + K_2 \left(\frac{\log N}{N} + \frac{\log P}{P}\right)^{3/2}.\label{eq:rate-of-convergence}
\end{align}
Because the right-hand side expression is bounded above by a constant multiple of $\sqrt{\log(N)/N + \log(P)/P}$, the decay rate is also bounded above by a quantity of order $\sqrt{\log(N)/N + \log(P)/P}$.
\end{Th}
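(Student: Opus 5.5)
Write $\varepsilon=\log(N)/N+\log(P)/P$ and
\[
G(\overline{a},f_0,f_1)=H(\overline{a},1-\overline{a})+\theta(f_0,f_1),
\]
so that $-H(f_0,f_1,1-f_0-f_1)+(f_0+f_1)=\theta(f_0,f_1)-2(f_0+f_1)+2(f_0+f_1)$. In fact we check directly that $H-\overline f$-type expression: $H(f_0,f_1,1-f_0-f_1)-(f_0+f_1)=\theta(f_0,f_1)$ by the definition in Lemma \ref{lemma:theta-function}. Hence $\mathscr{B}=\{(\overline{a},f_0,f_1)\in\mathscr{B}_0: h(\overline{a})-\theta(f_0,f_1)\in(-\varepsilon,\varepsilon]\}$, where $h(x)=H(x,1-x)$. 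The plan is to compute the volume by slicing. Fix $(f_0,f_1)\in\mathscr{D}$ and set $t=\theta(f_0,f_1)$. The slice is then $\{\overline{a}\in(0,1): h(\overline{a})\in(t-\varepsilon,t+\varepsilon]\}$. By Lemma \ref{lemma:theta-function}, $t\le 1$, and $\theta$ attains the value $1$ only at $(1/4,1/4)$. Since $\text{Vol}(\mathscr{B}_0)=1/2$, it suffices to bound
\[
\int_{\mathscr{D}}\left|\{\overline{a}: |h(\overline{a})-\theta(f_0,f_1)|\le\varepsilon\}\right|\,df_0\,df_1 .
\]

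First I bound the one-dimensional slice length $L(t)$. Because $h$ is symmetric about $1/2$ and monotone on $(0,1/2]$, the set $\{|h-t|\le\varepsilon\}$ is a union of at most two intervals. There are two regimes.

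\emph{Away from the peak}, i.e.\ $t+\varepsilon<1$. On $(0,1/2)$ we have $h'(x)=\log((1-x)/x)$, and this derivative becomes small only near $x=1/2$. Near $1/2$ we have $1-h(x)\approx (2/\ln 2)(x-1/2)^2$. The level $h=1-s$ is therefore reached at $x=1/2-\Theta(\sqrt{s})$, where $h'\asymp\sqrt{s}$. This gives $L(t)\lesssim \varepsilon/\sqrt{1-t}$ up to constants, and more sharply $L(t)\lesssim \sqrt{1-t+\varepsilon}-\sqrt{\max(1-t-\varepsilon,0)}$. For small $t$ the interval endpoints lie near $0$ or $1$, where $h'$ is large, so $L$ is even smaller there. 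This is where Lemma \ref{lemma:ratio-approx} enters: the slice width $\varepsilon/h'(1/2-\sqrt{c\varepsilon})$ equals $\varepsilon/\log[(1/2+\sqrt{c\varepsilon})/(1/2-\sqrt{c\varepsilon})]\approx\tfrac12\sqrt{\varepsilon/c}$.

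\emph{Near the peak}, i.e.\ $1-t\le\varepsilon$. Here the whole band around $1/2$ of width $O(\sqrt{\varepsilon})$ is included, so $L(t)=O(\sqrt{\varepsilon})$.

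Uniformly in $t$, then, $L(t)\le K\sqrt{\varepsilon}$ together with the refined bound $L(t)\lesssim\varepsilon/\sqrt{1-t}$. Integrating the crude bound $K\sqrt\varepsilon$ over $\mathscr{D}$ (area $1/2$) already yields the leading term $K_1\sqrt{\varepsilon}$.

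The $\varepsilon^{3/2}$ term comes from the second-order Taylor corrections in Lemma \ref{lemma:ratio-approx}. The expansion $\ln\frac{1/2+u}{1/2-u}=4u+O(u^3)$ with $u=\sqrt{c\varepsilon}$ makes the ratio equal to $\tfrac12\sqrt{\varepsilon/c}\,(1+O(\varepsilon))$. Likewise the quadratic approximation of $1-h$ has a quartic error. Both contribute $O(\varepsilon^{3/2})$, and these are collected into $K_2$. The corrections are only relevant once $\varepsilon$ is no longer tiny; otherwise they are absorbed.

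The main obstacle is making the slice-length bound uniform over all $t\in(-\infty,1]$. The difficulty is concentrated near the maximum of $h$, where $h'$ vanishes: there the bound must be taken as the width of a sublevel band rather than as $\varepsilon/|h'|$. I will handle this by splitting $(0,1/2)$ at $x=1/2-\sqrt{c\varepsilon}$. On the inner piece I bound the length trivially by $2\sqrt{c\varepsilon}$. On the outer piece $|h'|\ge h'(1/2-\sqrt{c\varepsilon})$, so the preimage of an interval of length $2\varepsilon$ has length at most $2\varepsilon/h'(1/2-\sqrt{c\varepsilon})$, which Lemma \ref{lemma:ratio-approx} evaluates. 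Doubling for the symmetric half $(1/2,1)$ and integrating over $\mathscr{D}$ gives Eq.~\eqref{eq:rate-of-convergence}.
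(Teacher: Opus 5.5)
Your proof is correct, and it takes a genuinely different and simpler route from the paper's; the difference lies in \emph{where} the split is made. The paper introduces a regularization parameter (eventually set equal to your $\varepsilon$) and partitions the $(f_0,f_1)$-triangle by the value of $\theta$. On $\mathscr{D}_1$, where $\theta$ stays roughly $2\varepsilon$ below its maximum, it bounds the slice by the tangent-line estimate $4\varepsilon/h'(h^{-1}(1-2\varepsilon))$. On $\mathscr{D}_2$, a neighbourhood of $(1/4,1/4)$, it uses the full band width $O(\sqrt{\varepsilon})$, and then needs Lemma \ref{lemma:theta-function} plus an ellipse approximation to get $\mathrm{Vol}(\mathscr{D}_2)=O(\varepsilon)$. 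The $\varepsilon^{3/2}$ term in the statement is exactly $\mathrm{Vol}(\mathscr{D}_2)\times O(\sqrt{\varepsilon})$. It is not a higher-order Taylor correction in Lemma \ref{lemma:ratio-approx}, as you suggest.

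You instead split the $\overline{a}$-axis at $1/2-\sqrt{c\varepsilon}$ inside each slice. This gives $L(t)\le K\sqrt{\varepsilon}$ uniformly in $t$, so the outer integral is trivial and nothing about $\theta$ is needed. The inequality then holds with any $K_2>0$, and your paragraph explaining the $\varepsilon^{3/2}$ term can simply be deleted. Your route avoids the paper's approximate steps (the ellipse area and the tuning of the regularization parameter).

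Some small repairs are needed. Replace the ``roughly'' of Lemma \ref{lemma:ratio-approx} by the exact inequality $\ln\frac{1/2+u}{1/2-u}\ge 4u$ for $0\le u<1/2$. Track the factor $\ln 2$ that comes from $h'(x)=\log\frac{1-x}{x}$ being base $2$. Dispose of the case $c\varepsilon\ge 1/4$ by noting that the ratio is at most $1$. Also fix your opening line: $-H(f_0,f_1,1-f_0-f_1)+(f_0+f_1)=-\theta(f_0,f_1)$, so $G$ should be $h-\theta$. Your subsequent description of $\mathscr{B}$ is nevertheless right.

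Finally, the refined bound you state but do not use, $L(t)\le C\varepsilon/\sqrt{1-t}$, is where the structure of $\theta$ genuinely pays off. Since $-\nabla^2\theta\succeq(1/\ln 2)\mathbf{I}$ on the interior, and $(1/4,1/4)$ is an interior maximum, strong concavity gives $1-\theta\ge|(f_0,f_1)-(1/4,1/4)|^2/(2\ln 2)$. Hence $\int(1-\theta)^{-1/2}\,df_0\,df_1<\infty$, and the volume ratio is in fact $O(\varepsilon)$. That is a square root better than both your crude bound and the paper's.
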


\begin{proof}
Denote by $I_{\mathscr{B}}$ the indicator function for whether a triple $(\overline{a},f_0,f_1)$ lies in $\mathscr{B}$. Denote by $\mathscr{F}\subset [0,1]^2$ the set of points $(f_0,f_1)$ satisfying $0<f_0,f_1<1$ with $f_0+f_1<1$; this is just an isosceles right triangle with unit side length. 

First compute $\text{Vol}(\mathscr{B}_0)$; this is the volume of a unit cube sliced along the face diagonal, which is $0.5$. Next compute $\text{Vol}(\mathscr{B})$. By Fubini's theorem, this can be computed by evaluating the following double integral, 
\begin{equation}
\text{Vol}(\mathscr{B})=\int_{\mathscr{F}} \left(\int_0^1 I_{\mathscr{B}} d\overline{a}\right) df_0df_1. \label{appeq:vol-B}
\end{equation}
We evaluate Eq.~\eqref{appeq:vol-B} as follows. For each fixed pair $(f_0,f_1)\in\mathscr{F}$ we define the $(f_0,f_1)$-conditioned set $\mathscr{B}(f_0,f_1)\subset [0,1]$ as the set of all $\overline{a}$ satisfying 
\begin{equation}
    H\left(\overline{a},1-\overline{a}\right) - H\left(f_0,f_1,1-f_0-f_1\right)+\left(f_0+f_1\right)\in \left(-\left(\frac{\log N}{N}+\frac{\log P}{P}\right),\frac{\log N}{N} + \frac{\log P}{P}\right]. \label{appeq:bad-set}
\end{equation} 
We obtain upper bounds on the length of $\mathscr{B}(f_0,f_1)$ by controlling the gradient of the entropy function $H(x)=x\log(1/x)+(1-x)\log(1/(1-x))$, allowing us to replace the inner integral in Eq.~\eqref{appeq:vol-B} with the upper bound quantities. Finally, we integrate over $\mathscr{F}$ to obtain an upper bound on $\text{Vol}(\mathscr{B})$. Critically, the quantity $1/H'(x)$ (one over the gradient of entropy) that helps with obtaining upper bounds is itself unbounded as $x\rightarrow 1/2$, so we rely on $\varepsilon$ regularization \cite{tao2010epsilon}. 

Concretely, let $\varepsilon>0$, and further assume that $f_0$ and $f_1$ are given with $\theta=\theta(f_0,f_1)=H\left(f_0,f_1,1-\right.\allowbreak \left. f_0-f_1\right)-\left(f_0+f_1\right)$. $\mathscr{B}(f_0,f_1)$ is exactly the set of all $\overline{a}$ such that $H(\overline{a})\in (\theta-(\log(N)/N + \log(P)/P),\theta+(\log(N)/N + \log(P)/P)]$. We consider two cases: (1) $\theta+(\log(N)/N+\log(P)/P) \leq 1-2\varepsilon$; and (2) $\theta+(\log(N)/N+\log(P)/P) > 1-2\varepsilon$. \\

\noindent\textit{Case 1}. The binary entropy function $H(x)$ is non-negative and symmetric about $x=1/2$, where its value is also uniquely maximized with $H(1/2)=1$. It is also strictly increasing on $[0,1/2]$ and strictly decreasing on $(1/2,1]$. Thus, in Case 1, the set of all $\overline{a}$ is the disjoint union of two intervals that are symmetric about $1/2$, where the left interval is strictly contained in $[0,1/2]$. By bounding the left interval from above, we immediately obtain an upper bound for $\mathscr{B}(f_0,f_1)$ by multiplying the first bound by $2$. Concretely, let $\Delta^\ast$ denote the left interval just described. Let $\gamma_U$ be the smaller root of the equation $H(x)=\theta+\log(N)/N + \log(P)/P$; note $\gamma_U\in[0,1/2)$. Because $H(x)$ is concave, the tangent line to $H(x)$ at $x=\gamma_U$ --- namely, $y=H'(\gamma_U)(x-\gamma_U)+H(\gamma_U)$ --- lies above its graph, so $\Delta^\ast$ can be approximated by the interval $\overline{\Delta}$ with right endpoint $\gamma_U$ and left endpoint given by $x_L=[\theta-(\log(N)/N + \log(P)/P) - H(\gamma_U)]/H'(\gamma_U) + \gamma_U$ (see Figure \ref{fig:convergence_rate_proof}A). The length of $\overline{\Delta}$ is $\gamma_U-x_L=-[\theta-(\log(N)/N + \log(P)/P) - H(\gamma_U)]/H'(\gamma_U) = 2(\log(N)/N + \log(P)/P)/H'(\gamma_U)$. Because $H(x)$ is strictly concave, $1/H'(x)$ is strictly increasing on $[0,1/2]$. Let $H^{-1}(1-2\varepsilon)$ denote the smaller root of $H(x)=1-2\varepsilon$. Because $H$ is also strictly increasing on $[0,0.5]$, $\gamma_U\leq H^{-1}(1-2\varepsilon)$ and hence $1/H'(\gamma_U) \leq 1/H'(H^{-1}(1-2\varepsilon))$. We now approximate $1/H'(H^{-1}(1-2\varepsilon))$. By performing a Taylor expansion of $H(x)$ around $x=1/2$, we have $H(x)\approx 1 - 2\left(x-1/2\right)^2/\ln2$. Equating the Taylor approximation on the right with $1-2\varepsilon$, we obtain $x\approx 1/2 - \sqrt{\varepsilon \ln2}$. Thus, $1/H'(H^{-1}(1-2\varepsilon))\approx 1/H'(1/2-\sqrt{\varepsilon\ln2})=\ln2/\ln\left((1/2+\sqrt{\varepsilon\ln2})/(1/2-\sqrt{\varepsilon\ln2})\right)$, and so we obtain an upper bound on $\overline{\Delta}$:      
\begin{equation*}
\overline{\Delta} \leq \frac{2\left(\frac{\log N}{N} + \frac{\log P}{P}\right)}{H'(H^{-1}(1-2\varepsilon))} \approx \frac{2\ln2\left(\frac{\log N}{N} + \frac{\log P}{P}\right)}{\ln\left(\frac{\frac{1}{2}+\sqrt{\varepsilon\ln2}}{\frac{1}{2}-\sqrt{\varepsilon\ln2}}\right)}.
\end{equation*}
Therefore, $\mathscr{B}(f_0,f_1)$ is bounded above as follows.
\begin{equation}
|\mathscr{B}(f_0,f_1)|= 2\Delta^\ast \leq 2\overline{\Delta} \leq \frac{4\ln2\left(\frac{\log N}{N} + \frac{\log P}{P}\right)}{\ln\left(\frac{\frac{1}{2}+\sqrt{\varepsilon\ln2}}{\frac{1}{2}-\sqrt{\varepsilon\ln2}}\right)}.\label{eq:case-1}
\end{equation}

\begin{figure}[ht]
\begin{center}
\includegraphics[width=0.9\textwidth]{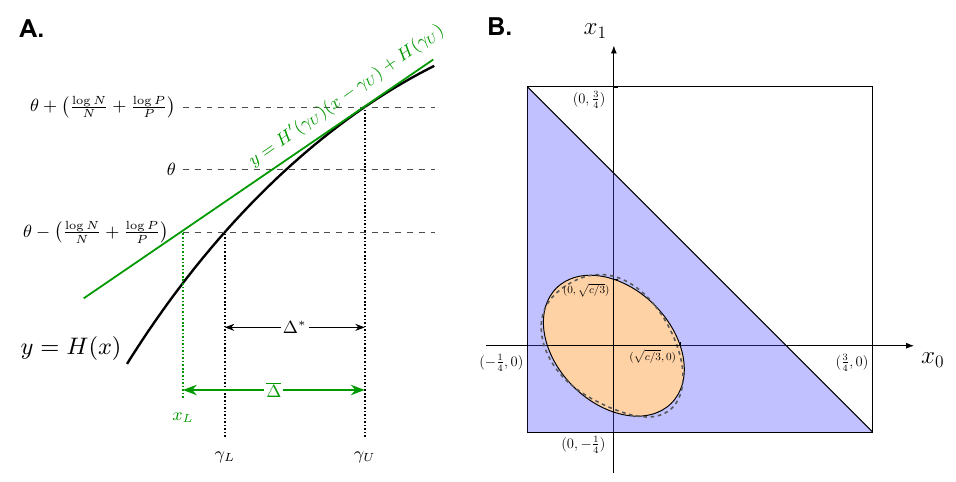}
\end{center}
\caption{Visualizations for the proof of Theorem \ref{thm:convergence_uniform}. \textbf{A.} Zoomed in graph of the binary entropy function $H(x)=x\log x+(1-x)\log(1-x)$ restricted to $x\in[0,0.5]$. \textbf{B.} Translation of the graph of the set of points $(f_0,f_1)$ that represent ancestry-specific allele fraction constraints, where the parameterization $x_0=f_0-1/4$ and $x_1=f_1-1/4$ is used. The quantity $c=(2\ln2)\varepsilon+\ln2(\log N/N +\log P/P)$, as defined in the proof of Theorem \ref{thm:convergence_uniform}. The ellipse and the region outside it bounded by the isosceles triangle are colored differently, as they depict two cases for which the volume of $\mathscr{B}$ (defined in Main Text) is to be bounded from above. The dashed closed curve overlaying the boundary of the ellipse depicts the actual region for one of the cases, which we approximate to second order with the ellipse.}
\label{fig:convergence_rate_proof}
\end{figure}

\noindent\textit{Case 2}. Depending on the value of $\theta$, $\mathscr{B}(f_0,f_1)$ is either a disjoint union of two intervals symmetric about $1/2$, or a single interval centered at $1/2$. In either case, $\mathscr{B}(f_0,f_1)$ is no larger than the interval $[\gamma_L,1-\gamma_L]$ with length $1-2\gamma_L$, where $\gamma_L$ is defined as the smaller root of the equation $H(x)=\theta-(\log(N)/N+\log(P)/P)$ (see Figure \ref{fig:convergence_rate_proof}A). Let $H^{-1}(1-2\varepsilon-2(\log(N)/N + \log(P)/P))$ denote the smaller root of $H(x)=1-2\varepsilon-2(\log(N)/N + \log(P)/P)$. Because $H$ is strictly increasing on $[0,0.5]$, $\gamma_L \geq H^{-1}(1-2\varepsilon-2(\log(N)/N + \log(P)/P))$, and so $|\mathscr{B}(f_0,f_1)| \leq 1-2H^{-1}(1-2\varepsilon-2(\log(N)/N + \log(P)/P))$. We now approximate the quantity on the right. Performing a Taylor expansion of $H(x)$ around $x=1/2$ and equating the approximation with $1-2\varepsilon-2(\log(N)/N + \log(P)/P)$, we obtain $H^{-1}(1-2\varepsilon-2(\log(N)/N + \log(P)/P))\approx 1/2 - \sqrt{\ln2\left(\varepsilon + \log(N)/N + \log(P)/P\right)}$. Therefore, $\mathscr{B}(f_0,f_1)$ is bounded above as follows.
\begin{equation}
|\mathscr{B}(f_0,f_1)| \leq 1-2H^{-1}\left(1-2\varepsilon-2\left(\frac{\log N}{N} + \frac{\log P}{P}\right)\right)\approx 2\sqrt{\ln2\left(\varepsilon + \frac{\log N}{N}+\frac{\log P}{P}\right)}. \label{eq:case-2}
\end{equation}

To finish bounding $\text{Vol}(\mathscr{B})$ from above, we need the volumes of the set of points $(f_0,f_1)$ that satisfy each case. Concretely, 
\begin{eqnarray*}
\mathscr{D}_1 & = & \left\{(f_0,f_1):\theta(f_0,f_1) \leq 1-2\varepsilon - \left(\frac{\log N}{N} + \frac{\log P}{P}\right)\right\}, \\
\mathscr{D}_2 & = & \left\{(f_0,f_1):\theta(f_0,f_1) > 1-2\varepsilon - \left(\frac{\log N}{N} + \frac{\log P}{P}\right)\right\},
\end{eqnarray*}
where $\mathscr{D}_1$ and $\mathscr{D}_2$ denote the disjoint sets satisfying Case 1 and Case 2 respectively (note $\mathscr{D}_1\cup \mathscr{D}_2=\mathscr{F}$). By Lemma \ref{lemma:theta-function}, $\mathscr{D}_2$ is contained in some ball containing $(1/4,1/4)$. To approximate $\mathscr{D}_2$, we perform a Taylor expansion of $\theta(f_0,f_1)$ around $(1/4,1/4)$, obtaining 
\begin{align*}
\theta(f_0,f_1) \approx\;& \theta\!\left(\tfrac14,\tfrac14\right)
+ \sum_{i\in\{0,1\}}
\left.\frac{\partial\theta}{\partial f_i}\right|_{(\tfrac14,\tfrac14)}
\left(f_i-\tfrac14\right) \\
&\quad
+ \frac12\sum_{k=0}^2
\left.\frac{\partial^2 \theta}{(\partial f_0)^k (\partial f_1)^{2-k}}\right|_{(\tfrac14,\tfrac14)}
\left(f_0-\tfrac14\right)^k \left(f_1-\tfrac14\right)^{2-k} \\
=\;& 1-\frac{1}{\ln2}\left[
\left(f_0+f_1-\tfrac12\right)^2
+2\left(f_0-\tfrac14\right)^2
+2\left(f_1-\tfrac14\right)^2
\right].
\end{align*}
Replacing $\theta(f_0,f_1)$ in the definition of $\mathscr{D}_2$ with the last quantity above, we obtain as an approximation to $\mathscr{D}_2$ the set of points $(f_0,f_1)\in \mathscr{F}$ satisfying 
\begin{equation}
\left(f_0+f_1-\frac{1}{2}\right)^2 + 2\left(f_0-\frac{1}{4}\right)^2+2\left(f_1-\frac{1}{4}\right)^2 > c,\label{eq:f0f1-approx-region}
\end{equation}
where $c=(2\ln2)\varepsilon + \ln2(\log(N)/N + \log(P)/P)$. To compute the volume of this set, reparameterize Eq.~\eqref{eq:f0f1-approx-region} by setting $x_0=f_0-1/4$ and $x_1=f_1-1/4$. This yields the constrained region 
\[
\left\{
\begin{aligned}
-\tfrac14 \le x_0, x_1 \le \tfrac34,\\
x_0 + x_1 \le \tfrac12,\\
3x_0^{2} + 2x_0x_1 + 3x_1^{2} > c,
\end{aligned}
\right.
\]
which is the interior of a rotated ellipse of the form $Ax_0^2+Bx_0x_1+Cx_1^2=D$, whose volume is given by $2\pi D/\sqrt{4AC-B^2}$ (Figure \ref{fig:convergence_rate_proof}B). Setting $A=C=3,B=2$ and $D=c$, we obtain 
\begin{equation}
\text{Vol}(\mathscr{D}_2)\approx \left(\frac{\pi\ln2}{\sqrt{2}}\right)\varepsilon + \frac{\ln2}{4\sqrt{2}}\left(\frac{\log N}{N} + \frac{\log P}{P}\right).\label{eq:vol-D2}
\end{equation}
Consequently, 
\begin{equation}
\text{Vol}(\mathscr{D}_1)\approx \frac{1}{2}-\left[\left(\frac{\pi\ln2}{\sqrt{2}}\right)\varepsilon + \frac{\ln2}{4\sqrt{2}}\left(\frac{\log N}{N} + \frac{\log P}{P}\right)\right] \leq \frac{1}{2}.\label{eq:vol-D1}
\end{equation}
Putting the pieces together (Eqs.~\eqref{eq:case-1}, \eqref{eq:case-2}, \eqref{eq:vol-D2} and \eqref{eq:vol-D1}), we obtain the following bound on $\text{Vol}(\mathscr{B})$.
\begin{align*}
\mathrm{Vol}(\mathscr{B})
&= \int_{\mathscr{F}} \left(\int_0^1 I_{\mathscr{B}}\, d\overline{a}\right)\, df_0\, df_1 \\
&\le \int_{\mathscr{D}_1}
\left[
\frac{4\ln2\left(\frac{\log N}{N} + \frac{\log P}{P}\right)}
{\ln\!\left(\frac{\frac12+\sqrt{\varepsilon\ln2}}{\frac12-\sqrt{\varepsilon\ln2}}\right)}
\right] df_0\, df_1
+ \int_{\mathscr{D}_2}
\left(2\sqrt{\ln2\left(\varepsilon + \frac{\log N}{N}+\frac{\log P}{P}\right)}\right) df_0\, df_1 \\
&\le \frac{2\ln2\left(\frac{\log N}{N} + \frac{\log P}{P}\right)}
{\ln\!\left(\frac{\frac12+\sqrt{\varepsilon\ln2}}{\frac12-\sqrt{\varepsilon\ln2}}\right)} \\
&\quad
\hspace{1cm}+ \left[\left(\sqrt{2}\ln2\right)\pi\varepsilon
+ \frac{\ln2}{2\sqrt{2}}\left(\frac{\log N}{N} + \frac{\log P}{P}\right)\right]
\sqrt{\ln2\left(\varepsilon + \frac{\log N}{N}+\frac{\log P}{P}\right)}.
\end{align*}
Since the inequality above holds for any (small) positive $\varepsilon$, choose $\varepsilon=\log(N)/N + \log(P)/P$. The second term of the upper bound simplifies to $k_2\left(\log(N)/N + \log(P)/P\right)^{3/2}$ where $k_2=(\ln2)^{3/2}\left(2\pi+1/2\right)\linebreak\approx 3.91$. To simplify the first term, we apply Lemma \ref{lemma:ratio-approx} with $\varepsilon=\log(N)/N + \log(P)/P$, obtaining the expression $k_1\sqrt{\log(N)/N + \log(P)/P}$ where $k_1=\sqrt{\ln 2} \approx 0.833$. Therefore Eq.~\eqref{eq:rate-of-convergence} holds with $K_1=2k_1$ and $K_2=2k_2$, completing the proof.
\end{proof}

\section{Doubly Constrained Admixed Arrays}
\label{sec:results}

Having analyzed the impact of the row-sum and paired column-sum constraints separately, and identified the entropic structure underlying each, we now turn to the doubly constrained family, $\mathscr{A}_{12}$. In the derivation of Proposition \ref{prop:a2}, Eq.~\eqref{eq:a2_firstexp} was obtained by counting the number of arrays conditioned on assigning a feasible vector of column sums to the local ancestry matrix $\mathbf{A}$. This conditioning strategy extends directly to the doubly constrained family $\mathscr{A}_{12}$. Accordingly, let $\mathscr{A}(\mathbf{v}_1,\mathbf{v}_2,\bsa)$ be the set of $N\times 2P$ binary matrices whose column sum is $[\mathbf{v}_1,\mathbf{v}_2]$ (both $\mathbf{v}_1$ and $\mathbf{v}_2$ are integer vectors lying in $[0,N]^{P}$) and whose row sum is $\bsa=(a_{1\cdot},\ldots,a_{N\cdot})$. 

\begin{Prop}[Size of $\mathscr{A}_{12}$]
\label{prop:a12}
For given $N,P$, and $\bsa=(a_{1\cdot},\ldots,a_{N\cdot}),\bsphi_0$ and $\bsphi_1$ fixed, the number of admixed arrays satisfying these constraints on row local ancestry tallies and ancestry-specific allele dosages is
\begin{equation}
\left|\mathscr{A}_{12}\right|=\sum_{[\mathbf{s}_1,\mathbf{s}_2]\in\mathscr{S}} \left[|\mathscr{A}(\mathbf{s}_1,\mathbf{s}_2,\bsa)|\left(\prod_{p=1}^P{s_{1p}+s_{2p}\choose \phi_{p,1}}{2N-(s_{1p}+s_{2p})\choose \phi_{p,0}}\right)\right],\label{eq:a12_exact}
\end{equation}
where $\mathscr{S}$ is defined in Eq.~\eqref{eq:feasible-set}, and $\mathscr{A}(\mathbf{s}_1,\mathbf{s}_2,\bsa)$ is the set of $N\times 2P$ binary matrices whose column and row sums are $[\mathbf{s}_1,\mathbf{s}_2]$ and $\bsa$ respectively. 
\end{Prop}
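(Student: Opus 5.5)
The plan is to repeat the conditioning argument behind Eq.~\eqref{eq:a2_firstexp} in Proposition \ref{prop:a2}, now with the row constraint imposed on $\mathbf{A}$. I will partition $\mathscr{A}_{12}$ according to the column-sum vector of the local ancestry matrix. For an admixed array $[\mathbf{A},\mathbf{X}]$, set $v_{1p}=\sum_{n}A_{np}$ and $v_{2p}=\sum_n A_{n(P+p)}$. The sets $\{[\mathbf{A},\mathbf{X}]\in\mathscr{A}_{12}: [\mathbf{v}_1,\mathbf{v}_2]=[\mathbf{s}_1,\mathbf{s}_2]\}$ are pairwise disjoint and cover $\mathscr{A}_{12}$. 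So $|\mathscr{A}_{12}|$ is the sum of their sizes over all $[\mathbf{s}_1,\mathbf{s}_2]\in([0,N]\cap\mathbb{Z})^{2P}$.

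\textbf{Step 1: only feasible column sums contribute.} The argument at the start of Section \ref{sec:single-constraint} applies verbatim, since the row constraint plays no role in it. Any array in $\mathscr{A}_2$, and hence any array in $\mathscr{A}_{12}\subset\mathscr{A}_2$, satisfies $v_{1p}+v_{2p}\ge\phi_{p,1}$ and $2N-(v_{1p}+v_{2p})\ge\phi_{p,0}$. Therefore every nonempty block has $[\mathbf{s}_1,\mathbf{s}_2]\in\mathscr{S}$. Conversely, for $[\mathbf{s}_1,\mathbf{s}_2]\notin\mathscr{S}$ one of the binomial coefficients in Eq.~\eqref{eq:a12_exact} vanishes. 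So it makes no difference whether the sum is written over $\mathscr{S}$ or over all column-sum vectors.

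\textbf{Step 2: count a single block as a product.} Fix $[\mathbf{s}_1,\mathbf{s}_2]\in\mathscr{S}$. The array lies in the block exactly when two conditions hold: $\mathbf{A}$ has row sums $\bsa$ and column sums $[\mathbf{s}_1,\mathbf{s}_2]$, so $\mathbf{A}\in\mathscr{A}(\mathbf{s}_1,\mathbf{s}_2,\bsa)$; and $\mathbf{X}$ realizes the dosages $\bsphi_0,\bsphi_1$ given $\mathbf{A}$. The row constraint involves only $\mathbf{A}$. Hence, for each admissible $\mathbf{A}$, the number of compatible $\mathbf{X}$ is the same count obtained in the proof of Proposition \ref{prop:a2}, and it depends on $\mathbf{A}$ only through its column sums.

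To see this, color each cell of columns $p$ and $P+p$ of $\mathbf{X}$ by the value of the corresponding cell of $\mathbf{A}$. This gives $s_{1p}+s_{2p}$ ``red'' cells and $2N-s_{1p}-s_{2p}$ ``blue'' cells. We must choose $\phi_{p,1}$ of the red cells and $\phi_{p,0}$ of the blue cells to carry ones. Choices at different loci are independent, so the number of compatible $\mathbf{X}$ is
\[
\prod_{p=1}^P\binom{s_{1p}+s_{2p}}{\phi_{p,1}}\binom{2N-(s_{1p}+s_{2p})}{\phi_{p,0}}.
\]
Since this number is the same for every $\mathbf{A}$ in the block, the block has size $|\mathscr{A}(\mathbf{s}_1,\mathbf{s}_2,\bsa)|$ times this product. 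Summing over $\mathscr{S}$ gives Eq.~\eqref{eq:a12_exact}.

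\textbf{Main obstacle.} The only point needing care is Step 2: checking that the number of valid $\mathbf{X}$ depends on $\mathbf{A}$ only through its column sums, and that the row constraint does not couple to $\mathbf{X}$. With that established, the count factorizes within each block. Everything else is bookkeeping already carried out in Proposition \ref{prop:a2}.
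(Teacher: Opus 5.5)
Your proposal is correct and follows the same route as the paper. Both arguments condition on the column sums $[\mathbf{s}_1,\mathbf{s}_2]\in\mathscr{S}$ of $\mathbf{A}$, count the matrices in $\mathscr{A}(\mathbf{s}_1,\mathbf{s}_2,\bsa)$, and multiply by the per-locus count of compatible $\mathbf{X}$ inherited from Proposition \ref{prop:a2}; your version simply spells out the partition and why only feasible column-sum vectors contribute, which the paper leaves implicit.
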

\begin{proof}
Suppose that a choice of $\bsa$ leads to a non-empty set $\mathscr{A}(\mathbf{s}_1,\mathbf{s}_2,\bsa)$. For each local ancestry matrix $\mathbf{A}\in\mathscr{A}(\mathbf{s}_1,\mathbf{s}_2,\bsa)$, the accompanying allele dosage matrix $\mathbf{X}$ shares the same constraint on the numbers of zeros and ones as in Proposition \ref{prop:a2}. Thus, the number of such $\mathbf{X}$'s is $\prod_{p=1}^P{s_{1p}+s_{2p}\choose \phi_{p,1}}{2N-(s_{1p}+s_{2p})\choose \phi_{p,0}}$, completing the proof. 
\end{proof}

To get a sense of the number of summands contributing to the sum in Eq.~\eqref{eq:a12_exact}, we quantify the size of the feasible set $\mathscr{S}$ below.

\begin{Prop}[Size of feasible set]
\label{lemma:feasible-set-size}
Let $\mathscr{S}$ be the feasible set defined in Proposition \ref{prop:a2}, which consists of all local ancestry dosages satisfying the inequality constraints imposed by $\bsphi_0=(\phi_{p,0}:p=1,\ldots P)$ and $\bsphi_1=(\phi_{p,1}:p=1,\ldots P)$. Then the size of $\mathscr{S}$ is given by the following quantity. 
\begin{equation}
|\mathscr{S}|= \prod_{p=1}^P\left[\sum_{\ell=\phi_{p,1}}^{2N-\phi_{p,0}}\left(\min\{\ell,2N-\ell\}+1\right)\right]. \label{eq:feasible-set-size}
\end{equation} 
\end{Prop}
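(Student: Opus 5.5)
The count $|\mathscr{S}|$ factors over loci, so I would compute it one locus at a time. By Definition~\ref{def:feasible-set}, membership $[\mathbf{v}_1,\mathbf{v}_2]\in\mathscr{S}$ consists of conditions that each involve only the pair $(v_{1p},v_{2p})$ for a single $p$:
\begin{itemize}
\item $v_{1p},v_{2p}\in[0,N]\cap\mathbb{Z}$,
\item $v_{1p}+v_{2p}\geq \phi_{p,1}$,
\item $2N-(v_{1p}+v_{2p})\geq\phi_{p,0}$.
\end{itemize}
So $\mathscr{S}$ is a Cartesian product $\prod_{p=1}^P \mathscr{S}_p$, where
\[
\mathscr{S}_p=\{(x,y)\in([0,N]\cap\mathbb{Z})^2:\ \phi_{p,1}\leq x+y\leq 2N-\phi_{p,0}\}.
\]
Hence $|\mathscr{S}|=\prod_p|\mathscr{S}_p|$, and it remains to show that $|\mathscr{S}_p|$ equals the bracketed sum in Eq.~\eqref{eq:feasible-set-size}. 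Strictly, one should reindex coordinates so that $v_{1p}$ and $v_{2p}$ are grouped together, but this permutation is a bijection and does not change the count.

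Next, I would partition $\mathscr{S}_p$ according to the value $\ell=x+y$. Here $\ell$ ranges over the integers from $\phi_{p,1}$ to $2N-\phi_{p,0}$. If $\phi_{p,1}>2N-\phi_{p,0}$, both the set and the sum are empty, so the formula still holds with an empty sum.

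For each fixed $\ell\in[0,2N]$, I count the integer pairs $(x,y)$ with $x+y=\ell$ and $0\leq x,y\leq N$. Since $y=\ell-x$ is determined by $x$, the constraints reduce to $\max\{0,\ell-N\}\leq x\leq\min\{N,\ell\}$. The number of such $x$ is
\[
\min\{N,\ell\}-\max\{0,\ell-N\}+1 .
\]
I then split into two cases:
\begin{itemize}
\item If $\ell\leq N$, the count is $\ell+1$.
\item If $\ell>N$, the count is $N-(\ell-N)+1=2N-\ell+1$.
\end{itemize}
In both cases the count equals $\min\{\ell,2N-\ell\}+1$. Summing over $\ell$ gives $|\mathscr{S}_p|$, and taking the product over $p$ gives Eq.~\eqref{eq:feasible-set-size}.

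The only step needing care is the range of $\ell$. It uses the tacit assumption that the constraints are meaningful, namely $\phi_{p,0},\phi_{p,1}\geq0$. This guarantees $\phi_{p,1}\geq 0$ and $2N-\phi_{p,0}\leq 2N$, so every $\ell$ in the summation range lies in $[0,2N]$, where the case formula is valid. I would state this assumption explicitly. Apart from that, the argument is elementary.
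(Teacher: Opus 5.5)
Your proof is correct and follows the same route as the paper: factor $\mathscr{S}$ over loci, partition each locus by $\ell=v_{1p}+v_{2p}\in[\phi_{p,1},2N-\phi_{p,0}]$, and count the $\min\{\ell,2N-\ell\}+1$ admissible pairs for each $\ell$. Your explicit case check, your handling of an empty range, and your remark that $\phi_{p,0},\phi_{p,1}\geq 0$ is needed slightly tighten the paper's terser argument, which leaves these points implicit; without nonnegativity, out-of-range $\ell$ would contribute nonpositive terms.
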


\begin{proof}[Proof of Lemma \ref{lemma:feasible-set-size}]
In the definition of $\mathscr{S}$, the condition is that at any locus $p\in[P]$, the $p$th entries $(v_{1p},v_{2p})\in\{0,\ldots,N\}^2$ are restricted to sum to $\ell$, where $\ell$ ranges from $\phi_{p,1}$ to $2N-\phi_{p,0}$. For each fixed $\ell$ within the range, there are exactly $\min\{\ell,2N-\ell\}+1$ possible pairs of non-negative integers $(v_{1p},v_{2p})$ bounded above by $N$ such that $v_{1p}+v_{2p}=\ell$. Thus, the inner sum of Eq.~\eqref{eq:feasible-set-size} enumerates the number of possible pairs for locus $p$, and we obtain the size of $\mathscr{S}$ by multiplying the inner sum across all $P$ loci, which is just the product in Eq.~\eqref{eq:feasible-set-size}.   
\end{proof}

\subsection{Asymptotic Enumeration in a Semi-regular Case}
\label{subsec:asymptotic}

{\red Enumerating $\mathscr{A}_{12}$ is generally not straightforward. First, the feasible set $\mathscr{S}$ is typically very large: if $\phi_{p,0}=\phi_{p,1}=K$ for some positive integer $K$ at most $N$, then Proposition \ref{lemma:feasible-set-size} shows that $|\mathscr{S}|=\left[(N+1)^2-K(K+1)\right]^P$, which is generally $\Theta(N^{2P})$ unless $K/N=1-o(1)$. Second, as seen in the expression of $|\mathscr{A}_{12}|$ in Proposition \ref{prop:a12}, each term in the sum involves a $P$-fold product, complicating direct approximation strategies. Critically, even in the general semi-regular setting, where the normalized marginal constraints are controlled by just three parameters $\overline{a},f_0$ and $f_1$, a non-trivial argument is needed to obtain the generating function associated with $|\mathscr{A}_{12}|$. The Hessian associated with the Taylor expansion around the saddle point has a different rank depending on the region of parameter space. For these reasons, we analyze the doubly constrained model in a ``semi-regular $1/2$'' setting, in which the inequality constraints become equalities with the generating function admitting a tractable analytic representation, and so the different independence heuristic arises succinctly. The case-by-case asymptotics in the general semi-regular setting across all feasible triplets $(\overline{a},f_0,f_1)$ will be reported in a separate paper.}

%Enumerating $\mathscr{A}_{12}$ is challenging for several reasons. First, the feasible set $\mathscr{S}$ is typically very large: if $\phi_{p,0}=\phi_{p,1}=K$ for some positive integer $K$ at most $N$, then Proposition \ref{lemma:feasible-set-size} shows that $|\mathscr{S}|=\left[(N+1)^2-K(K+1)\right]^P$, which is generally $\Theta(N^{2P})$ unless $K/N=1-o(1)$. Second, as seen in the expression of $|\mathscr{A}_{12}|$ in Proposition \ref{prop:a12}, each term in the sum involves a $P$-fold product, complicating direct approximation strategies. Critically, even in the general semi-regular setting, the generating function associated with $|\mathscr{A}_{12}|$ does not admit a straightforward saddle-point formulation. After several attempts, we found that the constraints require enlarging the summation domain from $\mathscr{S}$ to $([0,N]\cap \mathbb{Z})^{2P}$, introducing additional dependencies that obstruct direct asymptotic analysis. For these reasons, we analyze the doubly constrained model in a ``semi-regular $1/2$'' setting, in which the inequality constraints become equalities and the generating function admits a tractable analytic representation.

\begin{Th}[Approximation of $\log|\mathscr{A}_{12}|$ in the semi-regular $1/2$ case]
\label{thm:growth-rate-a12}
Suppose that all ancestry-specific allele dosages are $N$ and all row local ancestry tallies are $P$. That is, $\phi_{1,0}=\ldots=\phi_{P,0}=\phi_{1,1}=\ldots=\phi_{P,1}=N$ and $a_{1\cdot}=\ldots=a_{N\cdot}=P$. Then the following upper bound holds: 
\begin{equation}
\alpha_{12}=\log|\mathscr{A}_{12}| \leq 2NP -\frac{1}{2}\left[N\log(\pi P) + P\log(\pi N)\right] + \frac{1}{2}\log(\pi NP).\label{eq:a12_upper_bound}
\end{equation}
Moreover, for $N,P\to\infty$ with $N=\Theta(P)$, 
\begin{equation}
\alpha_{12}=\log|\mathscr{A}_{12}| = 2NP -\frac{1}{2}\left[N\log(\pi P) + P\log(\pi N)\right] + \frac{1}{2}\log(\pi NP)-\frac{(N+P-1)^2}{8NP}\log(e)+O\left(\frac{1}{\sqrt{m}}\right),\label{eq:a12_approx_semiregular}
\end{equation}
where $m=\min\{N,P\}$ and $e:=\lim_{n\to\infty}(1+1/n)^n\approx2.71828$.
\end{Th}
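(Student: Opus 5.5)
In the semi-regular $1/2$ case the inequality constraints become equalities, and I would use this to reduce $|\mathscr{A}_{12}|$ to a coefficient extraction, then apply a saddle-point analysis at the trivial saddle $x=y=1$.

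\emph{Reduction.} With $\phi_{p,0}=\phi_{p,1}=N$, membership $[\mathbf{s}_1,\mathbf{s}_2]\in\mathscr{S}$ forces $s_{1p}+s_{2p}=N$ exactly. Then both binomials in Eq.~\eqref{eq:a12_exact} equal ${N\choose N}=1$. So by Proposition \ref{prop:a12}, $|\mathscr{A}_{12}|$ is the number of $N\times 2P$ binary matrices with all row sums $P$ and paired column sums $N$. Merging columns $p$ and $P+p$ gives
\begin{equation*}
|\mathscr{A}_{12}|=[x_1^P\cdots x_N^P\,y_1^N\cdots y_P^N]\prod_{n=1}^N\prod_{p=1}^P(1+x_ny_p)^2 .
\end{equation*}
Writing this as a Cauchy integral on the torus with $x_n=e^{i\theta_n}$ and $y_p=e^{i\varphi_p}$, the phases cancel exactly, because $\sum_{n,p}(\theta_n+\varphi_p)=P\sum_n\theta_n+N\sum_p\varphi_p$. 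Using $h(t)=|1+e^{it}|^2=4\cos^2(t/2)$, this gives
\begin{equation*}
|\mathscr{A}_{12}|=\frac{2^{2NP}}{(2\pi)^{N+P}}\int_{[-\pi,\pi]^{N+P}}\prod_{n,p}\frac{h(\theta_n+\varphi_p)}{4}\,d\boldsymbol{\theta}\,d\boldsymbol{\varphi}.
\end{equation*}
The integrand is invariant under $(\theta_n,\varphi_p)\mapsto(\theta_n+c,\varphi_p-c)$, which is the source of the rank deficiency. I would use this invariance (a measure-preserving change of variables on the torus) to set $\varphi_P=0$. This gains a factor $2\pi$ and leaves an integral over $N+P-1$ variables $\mathbf{z}=(\boldsymbol{\theta},\varphi_1,\ldots,\varphi_{P-1})$, each reduced mod $2\pi$.

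\emph{Upper bound \eqref{eq:a12_upper_bound}.} Apply Lemma \ref{lemma:quadratic-upper-bound} to each factor, with $t$ taken as the representative of $\theta_n+\varphi_p$ in $[-\pi,\pi]$. This is legitimate since $h$ is $2\pi$-periodic, and the bound then holds for the unreduced sum as well because the reduced $|t|$ is smaller. We get $\prod h/4\le\exp(-\tfrac14\sum_{n,p}(\theta_n+\varphi_p)^2)=\exp(-\tfrac12\mathbf{z}^\top\mathbf{B}\mathbf{z})$, where $\mathbf{B}$ is the matrix of Eq.~\eqref{eq:B-matrix}, i.e. the form $\tfrac12\sum(\theta_n+\varphi_p)^2$ with $\varphi_P=0$. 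Enlarging the domain to $\mathbb{R}^{N+P-1}$ and using $\det\mathbf{B}=N^{P-1}P^{N-1}$ from Lemma \ref{lemma:covariance-matrix-properties} gives
\begin{equation*}
|\mathscr{A}_{12}|\le 2^{2NP}(2\pi)^{-(N+P-1)/2}N^{-(P-1)/2}P^{-(N-1)/2}.
\end{equation*}
Taking $\log$ yields exactly \eqref{eq:a12_upper_bound}.

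\emph{Asymptotics.} Split the domain into a box $\mathscr{R}=\{\|\mathbf{z}\|_\infty\le m^{-1/2}\log m\}$ and its complement.
\begin{itemize}
\item Outside $\mathscr{R}$, I would use the same Gaussian domination to show the contribution is a $e^{-\Omega(\log^2 m)}$ fraction of the Gaussian integral. Some care is needed with large $|\theta_n|$ combined with small sums $\theta_n+\varphi_p$, since the invariance only removes one direction.
\item Inside $\mathscr{R}$, expand $\ln(h(t)/4)=-t^2/4-t^4/96+O(t^6)$. The main integral becomes the Gaussian normalizer times $\mathbb{E}\exp\bigl(-\tfrac1{96}\sum_{n,p}T_{np}^4+O(\sum T_{np}^6)\bigr)$, where $\mathbf{z}\sim\mathcal{N}(0,\boldsymbol{\Sigma})$ with $\boldsymbol{\Sigma}=2\mathbf{B}^{-1}$ and $T_{np}=\theta_n+\varphi_p$.
\item Lemma \ref{lemma:covariance-matrix-properties} gives $\operatorname{Var}T_{np}=\sigma^2$ with $\sigma^2=2(N+P-1)/(NP)$ for $p<P$: the diagonal entries $2(N+P-1)/NP+4/N$ combine with the cross term $2\cdot(-2/N)$. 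For $p=P$ the variance is the same directly.
\item Hence $\mathbb{E}\sum T_{np}^4=3NP\sigma^4$, and $\tfrac{3NP\sigma^4}{96}=\frac{(N+P-1)^2}{8NP}$. This produces the $-\frac{(N+P-1)^2}{8NP}\log e$ term.
\end{itemize}

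\emph{Main obstacle.} The hard step is showing that $\mathbb{E}e^{-S/96}=e^{-\mathbb{E}S/96}(1+O(m^{-1/2}))$ for $S=\sum T_{np}^4$. For $\operatorname{Var}S$, I would sum Lemma \ref{lemma:covariance-gaussian-4th} over pairs of indices, using the correlations $\rho$ from Lemma \ref{lemma:covariance-matrix-properties}. Pairs sharing a row or column have $\rho=\Theta(1)$ and contribute $O(m^3\sigma^8)=O(m^{-1})$; the remaining pairs have $\rho=O(1/m)$. Together with $\mathbb{E}\sum T^6=O(m^{-1})$, this should give fluctuations of order $m^{-1/2}$. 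Since $S$ is a degree-4 Gaussian polynomial, hypercontractivity supplies tail bounds strong enough to pass from variance control to control of the exponential moment, and to absorb the truncation to $\mathscr{R}$. Combining the $e^{-\mathbb{E}S/96}$ factor with the Gaussian normalizer from the upper-bound step then gives \eqref{eq:a12_approx_semiregular}.
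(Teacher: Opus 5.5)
Your route is essentially the paper's: your $\prod_{n,p}(1+x_ny_p)^2$ Cauchy integral is the alternative noted in the paper's Remarks, and it lands directly on the $(N+P)$-variable integral the paper reaches after its first change of variables. However, two steps fail as written.

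First, the Gaussian normalizer is off by a factor $2^{(N+P-1)/2}$. With $\varphi_P=0$ one has exactly $\sum_{n,p}(\theta_n+\varphi_p)^2=\mathbf z^\top\mathbf B\mathbf z$, because the diagonal of $\mathbf B$ already counts the $P$ (resp.\ $N$) squared terms. So the exponent is $-\tfrac14\mathbf z^\top\mathbf B\mathbf z=-\tfrac12\mathbf z^\top(\mathbf B/2)\mathbf z$, not $-\tfrac12\mathbf z^\top\mathbf B\mathbf z$. This is also what your own later choice $\boldsymbol\Sigma=2\mathbf B^{-1}$ requires. The Gaussian integral is then $(4\pi)^{(N+P-1)/2}(\det\mathbf B)^{-1/2}$, and the bound becomes $4^{NP}\pi^{-(N+P-1)/2}N^{-(P-1)/2}P^{-(N-1)/2}$, whose $\log$ is \eqref{eq:a12_upper_bound}. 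Your displayed bound is smaller by $2^{(N+P-1)/2}$ and is false: for $N=P=2$ one has $|\mathscr{A}_{12}|=18$, while your bound gives about $8.1$. Reusing that normalizer in the asymptotic step would add a spurious $-(N+P-1)/2$ to \eqref{eq:a12_approx_semiregular}.

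Second, your periodicity justification runs the wrong way. Applying Lemma~\ref{lemma:quadratic-upper-bound} to the reduced representative $\tilde t\in[-\pi,\pi]$ gives $\ln(h(t)/4)\le-\tilde t^{2}/4$. Since $|\tilde t|\le|t|$, this bound is \emph{weaker} than $-t^2/4$, not stronger. For $\mathbf z\in[-\pi,\pi]^{N+P-1}$ the sums $\theta_n+\varphi_p$ range over $[-2\pi,2\pi]$, and the pointwise Gaussian domination genuinely fails. For example, take $\theta_n=\varphi_p=2\pi/3$ for all $n$ and all $p<P$. Then $\prod h/4=4^{-NP}$, whereas $\exp(-\tfrac14\sum(\theta_n+\varphi_p)^2)=e^{-\pi^2N(4P-3)/9}$, which is smaller as soon as $P\ge 2$.

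The paper's proof makes the same unjustified move. It asserts $W\le 1$ on all of $[-\pi,\pi]^{N+P-1}$ via Lemma~\ref{lemma:quadratic-upper-bound}, but that lemma only covers $|t|\le\pi$, so you cannot import the missing step from there. What is valid is $\prod h/4\le\exp(-\tfrac14\sum_{n,p}\|\theta_n+\varphi_p\|^2)$, where $\|\cdot\|$ is the distance to $2\pi\mathbb{Z}$. This is not a quadratic form in $\mathbf z$, so you need a separate argument showing that the part of the box where some $|\theta_n+\varphi_p|>\pi$ contributes negligibly compared with the Gaussian integral.

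That is exactly what your ``some care is needed'' remark leaves open. Both the tail estimate on $\mathscr{R}^c$ and the non-asymptotic inequality \eqref{eq:a12_upper_bound} rest on it. Inside $\mathscr{R}$, where $|\theta_n+\varphi_p|\le 2\delta<\pi$, your fourth-moment, variance and hypercontractivity argument goes through as in the paper.
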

\begin{proof}
We proceed along three key steps. First, we obtain a Cauchy integral expression for $|\mathscr{A}_{12}|$. Next, we simplify the Cauchy integral. Finally, we estimate the simplified integral using a saddle-point approximation. Our approximation quickly leads to the upper bound Eq.~\eqref{eq:a12_upper_bound}, and with some extra work, also to Eq.~\eqref{eq:a12_approx_semiregular}. 

\underline{Step 1: Obtaining the Cauchy integral.} We first observe that the constraints placed on $\phi_{p,0}$ and $\phi_{p,1}$ force the $p$th and $(P+p)$th columns of the local ancestry matrix $\mathbf{A}$ to have exactly $N$ entries of each ancestry (i.e., $N$ colored red and the other $N$ colored blue, following the visualization of Figure \ref{fig:example}). This follows from plugging $\phi_{p,0}=\phi_{p,1}=N$ into the inequalities defining all feasible local ancestry dosages, which simplifies to all local ancestry dosage vectors $[\mathbf{s}_1,\mathbf{s}_2]$ in the feasible set $\mathscr{S}$ satisfying $\mathbf{s}_1+\mathbf{s}_2=N\mathbf{1}$. Moreover, conditioned on assigning the ancestries for the entries of these matrices, the $p$th and $(P+p)$th columns of $\mathbf{X}$ must consist of all ones. Thus, taking into consideration the additional constraint placed on $a_{n\cdot}$, each admixed array $[\mathbf{A},\mathbf{X}]$ included in the total count is such that $\mathbf{X}$ is the matrix of all ones, while $\mathbf{A}$ must have $N$ entries of each ancestry between the $p$th and $(P+p)$th columns and exactly $P$ entries of each ancestry along the $n$th row. The problem is thus reduced to enumerating all such $\mathbf{A}$. Let us represent $\mathbf{A}$ as a collection of $NP$ \emph{cell pairs}. Specifically, let $\mathbf{v}_{n,p}=(A_{np},A_{n(P+p)})\in\{(0,0),(0,1),(1,0),(1,1)\}$ be the cell pair for the $n$th row at the $p$th locus, where $1\leq n\leq N$ and $1\leq p\leq P$. We claim that the constraints placed on $\mathbf{A}$ are equivalent to the collection of cell pairs $\mathbf{v}_{n,p}$ satisfying the condition that the occurrences of $(0,0)$ and $(1,1)$ are equal across rows $n$ and across columns $p$. To see this, observe that the constraints on $\mathbf{A}$ are equivalent to 
\begin{eqnarray*}
(\forall p\in[P])(\exists a_p,b_p\in\mathbb{N}) & \mathbf{v}_{1,p}+\ldots+\mathbf{v}_{N,p}=(a_p,b_p)~\wedge~a_p+b_p=N, \\
(\forall n\in[N])(\exists c_n,d_n\in\mathbb{N}) & \mathbf{v}_{n,1}+\ldots+\mathbf{v}_{n,P}=(c_n,d_n)~\wedge~c_n+d_n=P. 
\end{eqnarray*}
Now, $(a_p,b_p)=\kappa_p^{00}(0,0)+\kappa_p^{01}(0,1)+\kappa_p^{10}(1,0)+\kappa_p^{11}(1,1)$ and $(c_n,d_n)=\kappa_n^{00}(0,0)+\kappa_n^{01}(0,1)+\kappa_n^{10}(1,0)+\kappa_n^{11}(1,1)$, where $(\kappa_p^{00},\kappa_p^{01},\kappa_p^{10},\kappa_p^{11})$ and $(\kappa_n^{00},\kappa_n^{01},\kappa_n^{10},\kappa_n^{11})$ track the number of cell pairs of each type contributing to the $p$th locus and the $n$th row respectively. By directly comparing the entries and invoking $a_p+b_p=N$ for the $p$th locus, we obtain $\kappa_p^{00}=\kappa_p^{11}$. By a similar comparison for the $n$th individual, we obtain $\kappa_n^{00}=\kappa_n^{11}$. Therefore the claim is verified.

In the next step of obtaining the Cauchy integral, we construct a Laurent polynomial generating function for which the number of row paired cell collections $\{\mathbf{v}_{n,p}\}$ satisfying $\kappa_p^{00}=\kappa_p^{11}$ and $\kappa_n^{00}=\kappa_n^{11}$ for all $n$ and $p$ corresponds to a coefficient. Introduce the variables $\{x_n\}$, $\{w_n\}$ and $\{z_p\}$ to track the row sums, row balances and column balances respectively. For each row paired cell $\mathbf{v}_{n,p}$, define their weight by $f(x_n,w_n,z_p)=x_n^2w_nz_p+w_n^{-1}z_p^{-1}+2x_n$. Collecting the variables as $\mathbf{x}=(x_1,\ldots,x_N)$, $\mathbf{w}=(w_1,\ldots,w_N)$ and $\mathbf{z}=(z_1,\ldots,z_P)$, define
\begin{equation*}
F(\mathbf{x},\mathbf{w},\mathbf{z})=\prod_{n=1}^N\prod_{p=1}^Pf(x_n,w_n,z_p).%\label{eq:laurent_series}
\end{equation*}
We claim that the term whose coefficient corresponds to $|\mathscr{A}_{12}|$ is $x_1^P\cdots x_N^P$. To see this, first note that the row sum constraint of $P$ on the $n$th individual immediately implies that the power of $x_n$ in our term of interest is $P$. Second, note that the balance constraints ($\kappa_p^{00}=\kappa_p^{11}$ and $\kappa_n^{00}=\kappa_n^{11}$) implies that the powers of $w_n$ and $z_p$ are zero. Therefore, $|\mathscr{A}_{12}|=[x_1^P\cdots x_N^P][w_1^0\cdots w_N^0][z_1^0\cdots z_P^0] F(\mathbf{x},\mathbf{w},\mathbf{z})$. By Cauchy's theorem, 
\begin{equation}
|\mathscr{A}_{12}|=\frac{1}{(2\pi i)^{2N+P}} \oint\cdots\oint \frac{F(\mathbf{x},\mathbf{w},\mathbf{z})}{x_1^{P+1}\cdots x_N^{P+1}\cdot w_1\cdots w_N\cdot z_1\cdots z_P} dx_1\cdots dx_N\cdot dw_1\cdots dw_N\cdot dz_1\cdots dz_P,\label{eq:cauchy_integral}
\end{equation}
where each contour of the integral in Eq.~\eqref{eq:cauchy_integral} can be set to a counterclockwise path along the unit circle. 

\underline{Step 2: Simplifying the Cauchy integral.} To simplify Eq.~\eqref{eq:cauchy_integral}, we reparameterize the variables $x_n,w_n$ and $z_p$ with polar coordinates. For $n\in[N]$ and $p\in[P]$, let $x_n=\exp(i\vartheta_n),w_n=\exp(i\alpha_n)$ and $z_p=\exp(i\beta_p)$, with $\vartheta_n,\alpha_n$ and $\beta_p$ lying in $[-\pi,\pi]$. Then $dx_n/d\vartheta_n=ix_n,dw_n/d\alpha_n=iw_n$ and $dz_p/d\beta_p=iz_p$, so the Cauchy integral simplifies (we use shorthand notation $d\bsvartheta=d\vartheta_1\cdots d\vartheta_N$, $d\bsalpha=d\alpha_1\cdots d\alpha_N$ and $d\bsbeta=d\beta_1\cdots d\beta_P$): 
\begin{align}
|\mathscr{A}_{12}|
&= \frac{1}{(2\pi i)^{2N+P}}
\int_{[-\pi,\pi]^{2N+P}}
\Biggl[
\prod_{n=1}^N\prod_{p=1}^P
\Bigl(
e^{i(2\vartheta_n+\alpha_n+\beta_p)}
+ e^{-i(\alpha_n+\beta_p)}
+ 2e^{i\vartheta_n}
\Bigr)
\Biggr] \notag\\
&\qquad\qquad \cdot
\frac{i^{2N+P}}{e^{iP(\vartheta_1+\ldots+\vartheta_N)}}
\, d\bsvartheta\, d\bsalpha\, d\bsbeta \nonumber\\
    & = \frac{1}{(2\pi)^{2N+P}} \int_{[-\pi,\pi]^{2N+P}}\prod_{n=1}^N\prod_{p=1}^P 4\cos^2\left(\frac{\vartheta_n+\alpha_n+\beta_p}{2}\right) d\bsvartheta\, d\bsalpha\, d\bsbeta, & \label{eq:reparam_integral}
\end{align}
where the last equality follows from standard identities of Euler and trigonometry, $e^{ix}=\cos x + i\sin x$ and $\cos(2x)= 2\cos^2x - 1$. Observing that the quantity in Eq.~\eqref{eq:reparam_integral} depends on $\vartheta_n$ and $\alpha_n$ only through their sum $\vartheta_n+\alpha_n$ allows us to further simplify Eq.~\eqref{eq:reparam_integral}. Concretely, for $n\in [N]$ let $u_n=\vartheta_n+\alpha_n~(\text{mod}~2\pi)$. Let $h(t)=4\cos^2(t/2)$, and rewrite Eq.~\eqref{eq:reparam_integral} as $1/(2\pi)^{2N+P} \int_{[-\pi,\pi]^{2N+P}}\prod_{n=1}^N\prod_{p=1}^P h(\vartheta_n+\alpha_n+\beta_p)d\bsvartheta d\bsalpha d\bsbeta$. We claim that 
\begin{equation}
\int_{[-\pi,\pi]^{2N+P}} \prod_{n=1}^N\prod_{p=1}^P h(\vartheta_n+\alpha_n+\beta_p) d\bsvartheta\, d\bsalpha\, d\bsbeta = (2\pi)^N\int_{[-\pi,\pi]^{N+P}} \prod_{n=1}^N\prod_{p=1}^P h(u_n+\beta_p) d\bsu\, d\bsbeta. \label{eq:reduced_integral}
\end{equation}
To prove our claim, we first apply Fubini's theorem to recognize that we can hold the $\beta_p$ fixed and evaluate a $2N$-dimensional inner integral in $\vartheta_n$ and $\alpha_n$. Thus, fix $\beta_1,\ldots,\beta_P$. For each $n\in[N]$, the contribution of $\theta_n,\alpha_n$ to the integrand $\prod_{n=1}^N\prod_{p=1}^P h(\vartheta_n+\alpha_n+\beta_p)$ is exactly $\prod_{p=1}^P h(\vartheta_n+\alpha_n+\beta_p)$. Now define a map $K$ from the torus $\mathscr{G}^2=(\mathbb{R}/2\pi\mathbb{Z})^2$ to itself, given by $K(\vartheta,\alpha)=(\vartheta+\alpha,\alpha)~(\text{mod}~2\pi)$. This is a smooth bijection with inverse $K^{-1}(u,v)=(u-v,v)~(\text{mod}~2\pi)$. Its derivative matrix is 
\begin{equation*}
D_K=\begin{bmatrix}
\partial u/\partial\vartheta & \partial u/\partial\alpha \\
\partial v/\partial\vartheta & \partial v/\partial\alpha
\end{bmatrix} = \begin{bmatrix}
1 & 1 \\
0 & 1
\end{bmatrix},
\end{equation*}
which has $\det(D_K)=1$. Hence, Haar measure on $\mathscr{G}^2$ is invariant under $K$, and so for any function $g$ integrable on $\mathscr{G}^2$,
\begin{equation}
\int_{\mathscr{G}^2} g(\vartheta,\alpha)d\vartheta d\alpha = \int_{\mathscr{G}^2} g(K^{-1}(u,v))du dv.\label{eq:change-variables}
\end{equation}
Let $\Phi(u)=\prod_{p=1}^Ph(u+\beta_p)$; $\Phi$ is continuous on $[-\pi,\pi]$ and is $2\pi$-periodic. Set $g(\vartheta,\alpha)=\Phi(\vartheta+\alpha)$ in Eq.~\eqref{eq:change-variables}, so that $g(K^{-1}(u,v))=g(u-v,v)=\Phi(u)$. We obtain for each $n$
\begin{eqnarray*}
\int_{[-\pi,\pi]^2} \prod_{p=1}^Ph(\vartheta_n+\alpha_n+\beta_p)d\vartheta_n d\alpha_n & = & \int_{\mathscr{G}^2} g(\vartheta_n,\alpha_n)d\vartheta_n d\alpha_n \\
    & = & \int_{\mathscr{G}^2} \Phi(u_n)du_ndv_n \\
    & = & \left(\int_{-\pi}^\pi dv_n\right)\left(\int_{-\pi}^\pi \prod_{p=1}^P h(u_n+\beta_p)du_n\right) \\
    & = & (2\pi)\int_{-\pi}^\pi \prod_{p=1}^P h(u_n+\beta_p)du_n.
\end{eqnarray*}
Multiplying across all $n\in[N]$, we obtain an equality between the original $2N$-dimensional inner integral and a $N$-dimensional integral: 
\begin{equation}
\int_{[-\pi,\pi]^{2N}} \prod_{n=1}^N\prod_{p=1}^P h(\vartheta_n+\alpha_n+\beta_p)d\bsvartheta d\bsalpha = (2\pi)^N\int_{[-\pi,\pi]^N} \prod_{n=1}^N\prod_{p=1}^P h(u_n+\beta_p)d\bsu,\label{eq:reduced_inner_integral}
\end{equation}
where we use the shorthand notation $d\bsu= du_1\cdots du_N$. Integrating both sides of Eq.~\eqref{eq:reduced_inner_integral} over the $\beta_p$'s, we obtain Eq.~\eqref{eq:reduced_integral} as desired.

Next, we show that an extra $2\pi$ factor can be removed from the right-hand-side expression in Eq.~\eqref{eq:reduced_integral}. Concretely, we claim that
\begin{equation}
\int_{[-\pi,\pi]^{N+P}} \prod_{n=1}^N\prod_{p=1}^Ph(u_n+\beta_p) d\bsu d\bsbeta = (2\pi) \int_{[-\pi,\pi]^{N+P-1}}\prod_{n=1}^N\left[h(v_n)\prod_{p=1}^{P-1} h(v_n+b_p)\right]d\bsv d\bsb,\label{eq:doubly_reduced_integral}
\end{equation}
where we use the shorthand notation $d\bsv = dv_1\cdots dv_N$ and $d\bsb = db_1\cdots db_{P-1}$. Our claim is proved in a similar manner to how we justified Eq.~\eqref{eq:reduced_integral}. Let $\mathscr{G}^{N+P}=(\mathbb{R}/2\pi\mathbb{Z})^{N+P}$ be the $(N+P)$-torus, and define the map $K:\mathscr{G}^{N+P}\rightarrow \mathscr{G}^{N+P}$, given by $K(u_1,\ldots,u_N,\beta_1,\ldots,\beta_P)=(v_1,\ldots,v_N,b_1,\ldots,b_{P-1},t)$, where $v_n=u_n+t$ for $n\in[N]$, $b_p=\beta_p-t$ for $p\leq P-1$ and $t=\beta_P$. All expressions are defined modulo $2\pi$. This is again a smooth bijection with inverse $K^{-1}(v_1,\ldots,v_N,b_1,\allowbreak\ldots,b_{P-1},t)=(v_1-t,\ldots,v_N-t,b_1+t,\ldots,b_{P-1}+t,t)$. Its derivative matrix is 
\begin{equation*}
D_K=\begin{bmatrix}
\mathbf{I}_N & \mathbf{0}_{N\times (P-1)} & \mathbf{1}_{N\times 1} \\
\mathbf{0}_{(P-1)\times N} & \mathbf{I}_{P-1} & -\mathbf{1}_{(P-1)\times 1} \\
\mathbf{0}_{1\times N} & \mathbf{0}_{1\times (P-1)} & 1
\end{bmatrix},
\end{equation*}
which is upper triangular with ones along the diagonal and thus satisfies $\det(D_K)=1$. Hence, for any function $g$ integrable on $\mathscr{G}^{N+P}$, 
\begin{equation}
\int_{\mathscr{G}^{N+P}} g(u_1,\ldots,u_N,\beta_1,\ldots,\beta_P)d\bsu d\bsbeta = \int_{\mathscr{G}^{N+P}} g(K^{-1}(v_1,\ldots,v_N,b_1,\ldots,b_{P-1},t))d\bsv d\bsb dt.\label{eq:change-variables-2}
\end{equation}
Now let $g(u_1,\ldots,u_N,\beta_1,\ldots,\beta_P)=\prod_{n=1}^N\prod_{p=1}^P h(u_n+\beta_p)$. Under the inverse map $K^{-1}$, 
\begin{equation*}
u_n+\beta_p=\begin{cases}
    (v_n-t) + (b_p+t) = v_n+b_p & \text{if}~p\leq P-1 \\
    v_n & \text{if}~p=P.
\end{cases}
\end{equation*}
So $g(K^{-1}(v_1,\ldots,v_N,b_1,\ldots,b_{P-1},t))=\prod_{n=1}^N\left[h(v_n)\prod_{p=1}^{P-1} h(v_n+b_p)\right]$, which does not depend on $t$. Plugging this choice of $g$ into Eq.~\eqref{eq:change-variables-2}, we obtain
\begin{eqnarray*}
\int_{[-\pi,\pi]^{N+P}} \prod_{n=1}^N\prod_{p=1}^Ph(u_n+\beta_p) d\bsu d\bsbeta & = &  \int_{\mathscr{G}^{N+P}} g~d\bsu d\bsbeta \\
    & = & \int_{\mathscr{G}^{N+P}} g\circ K^{-1}~d\bsv d\bsb dt \\
    & = & \left(\int_{-\pi}^{\pi}dt\right)\left(\int_{[-\pi,\pi]^{N+P-1}}\prod_{n=1}^N\left[h(v_n)\prod_{p=1}^{P-1} h(v_n+b_p)\right]d\bsv d\bsb\right) \\
    & = & (2\pi)\int_{[-\pi,\pi]^{N+P-1}}\prod_{n=1}^N\left[h(v_n)\prod_{p=1}^{P-1} h(v_n+b_p)\right]d\bsv d\bsb,
\end{eqnarray*}
as desired. 

Therefore, applying Eqs.~\eqref{eq:reduced_integral} and \eqref{eq:doubly_reduced_integral} to Eq.~\eqref{eq:cauchy_integral}, we obtain the following simplified integral representation of $|\mathscr{A}_{12}|$:
\begin{equation}
|\mathscr{A}_{12}| = \frac{1}{(2\pi)^{N+P-1}}\int_{[-\pi,\pi]^{N+P-1}}\prod_{n=1}^N\left[h(v_n)\prod_{p=1}^{P-1} h(v_n+b_p)\right]d\bsv d\bsb,\label{eq:a12_reduced_integral_form}
\end{equation}
where $h(t)=4\cos^2(t/2)$.

\underline{Step 3: Approximating the integral.} This step is more involved, but can be organized around two key stages. We first rewrite Eq.~\eqref{eq:a12_reduced_integral_form} in terms of a standard saddle-point approximation, with extra terms distributed according to a truncated Gaussian law. We then apply concentration inequalities and hypercontractivity to recover the 4th moment explicitly and show that the higher order terms are $O(1/\sqrt{m})$. 

Concretely, write the integrand in Eq.~\eqref{eq:a12_reduced_integral_form} as
\begin{eqnarray}
\prod_{n=1}^N\left[h(v_n)\prod_{p=1}^{P-1} h(v_n+b_p)\right] &=& \exp\left(\sum_{n=1}^N\left(\ln(h(v_n))+\sum_{p=1}^{P-1}\ln(h(v_n+b_p))\right)\right) \nonumber\\
& = & \exp\left(\sum_{n=1}^N\sum_{p=1}^P \ln h(t_{np})\right) \label{eq:integrand}
\end{eqnarray}
where 
\begin{equation*}
t_{np} = \begin{cases}
    v_n+b_p & \text{if}~p\leq P-1 \\
    v_n & \text{if}~p=P
\end{cases}.
\end{equation*}
A Taylor expansion of $\ln(h(t))$ around $t=0$ gives $\ln(h(t))=\ln4 -t^2/4-t^4/96+O(t^6)$, so the natural candidate for a saddle-point approximation is given by the quadratic form
\begin{equation}
Q(\mathbf{v},\mathbf{b})= \frac{1}{4}\sum_{n=1}^N\sum_{p=1}^Pt_{np}^2.\label{eq:Q-quad-form}
\end{equation}
Define the weight
\begin{equation}
W(\mathbf{v},\mathbf{b})=\exp\left(\sum_{n=1}^N\sum_{p=1}^P \left[\ln\left(\frac{h(t_{np})}{4}\right)+\frac{t_{np}^2}{4}\right]\right),\label{eq:W-diff}
\end{equation}
where we use the shorthand $\mathbf{v}=(v_1,\ldots,v_N)$ and $\mathbf{b}=(b_1,\ldots,b_{P-1})$, so that the right hand side of Eq.~\eqref{eq:integrand} can be expressed as 
\begin{equation*}
\exp\left(\sum_{n=1}^N\sum_{p=1}^P \ln h(t_{np})\right)=4^{NP}\exp(-Q(\mathbf{v},\mathbf{b}))W(\mathbf{v},\mathbf{b}).
\end{equation*}
Now $W(\mathbf{v},\mathbf{b})\in[0,1]$ for all $(\mathbf{v},\mathbf{b})\in[-\pi,\pi]^{N+P-1}$. This is because each summand in Eq.~\eqref{eq:W-diff} is non-negative, by Lemma \ref{lemma:quadratic-upper-bound}. This implies the following upper bound on $|\mathscr{A}_{12}|$:
\begin{eqnarray*}
|\mathscr{A}_{12}| & = & \frac{1}{(2\pi)^{N+P-1}}\int_{[-\pi,\pi]^{N+P-1}} 4^{NP}\exp(-Q(\mathbf{v},\mathbf{b}))W(\mathbf{v},\mathbf{b})d\bsv d\bsb \\
    & \leq & \frac{4^{NP}}{(2\pi)^{N+P-1}}\int_{[-\pi,\pi]^{N+P-1}} \exp(-Q(\mathbf{v},\mathbf{b}))d\bsv d\bsb \\
    & \leq & \frac{4^{NP}}{(2\pi)^{N+P-1}}\int_{\mathbb{R}^{N+P-1}} \exp(-Q(\mathbf{v},\mathbf{b}))d\bsv d\bsb.
\end{eqnarray*}
To evaluate the last expression above, we observe that $Q(\mathbf{v},\mathbf{b}) = \frac{1}{2}(\mathbf{v},\mathbf{b})\left(\mathbf{B}/2\right)(\mathbf{v},\mathbf{b})^T$, where 
\begin{equation}
\mathbf{B}=\begin{bmatrix}
P\mathbf{I}_N & \mathbf{1}_{N\times (P-1)} \\
\mathbf{1}_{(P-1)\times N} & N\mathbf{I}_{P-1}
\end{bmatrix}.\label{eq:B-matrix}
\end{equation}
where $\mathbf{1}_{k\times \ell}$ denotes the $k\times \ell$ matrix of all ones. By Lemma \ref{lemma:covariance-matrix-properties}, $\det(\mathbf{B})=N^{P-1}P^{N-1}$, so the Gaussian integral is just $Z=\int_{\mathbb{R}^{N+P-1}}\exp(-Q)d\bsv\bsb = (2\pi)^{(N+P-1)/2}/\sqrt{\det(\mathbf{B}/2)}=(4\pi)^{(N+P-1)/2}/\linebreak (N^{(P-1)/2}P^{(N-1)/2})$. Plugging $Z$ back into the upper bound for $|\mathscr{A}_{12}|$, we obtain
\begin{equation*}
|\mathscr{A}_{12}|\leq \frac{4^{NP}N^{(P-1)/2}P^{(N-1)/2}}{\pi^{(N+P-1)/2}} \Longrightarrow \alpha_{12}\leq 2NP-\frac{1}{2}\left[N\log(\pi P)+P\log(\pi N)\right] + \frac{1}{2}\log(\pi NP),
\end{equation*}
the latter of which is Eq.~\eqref{eq:a12_upper_bound}. 

We now derive the approximation when $N=\Theta(P)$. Recall that $m=\min\{N,P\}$. Fix $\varepsilon\in(0,1/12)$, and set $\delta=m^{-1/2+\varepsilon}$. Define the region 
\begin{equation}
\mathscr{R}=\{(\mathbf{v},\mathbf{b}):(\forall n\in[N])(|v_n|\leq \delta) \wedge(\forall p\in[P-1])(|b_p|\leq \delta)\} \subset [-\pi,\pi]^{N+P-1}.\label{eq:good-region}
\end{equation}
With $Z$ the Gaussian normalizing constant computed earlier, define the measure 
\begin{equation*}
\mu(\mathscr{C})=\frac{1}{Z}\int_\mathscr{C}\exp(-Q(\mathbf{v},\mathbf{b}))d\bsv d\bsb,
\end{equation*}
where $\mathscr{C}$ is any measurable subset of $\mathbb{R}^{N+P-1}$. This is just the multivariate Gaussian law, with mean vector $\mathbf{0}\in\mathbb{R}^{N+P-1}$ and precision matrix $\mathbf{B}/2$. In particular, for any $\mathscr{C}$ we have 
\begin{equation*}
\int_{\mathscr{C}} 4^{NP}\exp(-Q(\mathbf{v},\mathbf{b}))W(\mathbf{v},\mathbf{b})d\bsv d\bsb = 4^{NP}Z\mathbb{E}_\mu\left[WI_{\mathscr{C}}\right],
\end{equation*}
where $\mathbb{E}_\mu[\cdot]$ denotes expectation with respect to probability measure $\mu$ and $I_\mathscr{C}$ is the indicator variable for a point in $\mathbb{R}^{N+P-1}$ lying in $\mathscr{C}$. On $\mathscr{R}^c:=[-\pi,\pi]^{N+P-1}\setminus \mathscr{R}$, we have $\int_{\mathscr{R}^c}4^{NP}\exp(-Q(\mathbf{v},\mathbf{b}))W(\mathbf{v},\mathbf{b})d\bsv d\bsb=4^{NP} Z\mathbb{E}_\mu[WI_{\mathscr{R}^c}]\leq 4^{NP} Z\cdot \mu(\mathscr{R}^c)$, where we used the fact that $W\leq 1$. We claim that there exists $c>0$ such that $\mu(\mathscr{R}^c)\leq 2(N+P)\exp(-cm^{2\varepsilon})\asymp O(m)\exp(-cm^{2\varepsilon})$, which implies that the contribution of the integral is at most $4^{NP}Z\cdot O(m)\exp(-cm^{2\varepsilon})$. To prove our claim, observe that under $\mu$, each coordinate $v_n$ and $b_p$ is univariate Gaussian, with variances determined by the diagonal entries of the matrix $\mathbf{\Sigma}=2\mathbf{B}^{-1}$: $\Var_\mu(v_n)=\Sigma_{n,n}$ and $\Var_\mu(b_p)=\Sigma_{(N+p),(N+p)}$. By Lemma \ref{lemma:covariance-matrix-properties}, these are $\Sigma_{n,n}=2(N+P-1)/NP$ and $\Sigma_{(N+p),(N+p)}=4/N$. The union bound and Chernoff bound together imply that
\begin{eqnarray*}
\mu(\mathscr{R}^c) & = & \mathbb{P}\left(\bigcup_{n,p}\{\pi\geq |v_n|> \delta\}\cup\{\pi\geq |b_p|> \delta\}\right)\\
    &\leq & \mathbb{P}\left(\bigcup_{n,p}\{|v_n|> \delta\}\cup\{|b_p|> \delta\}\right) \\
    & \leq & \sum_{n=1}^N\mathbb{P}(|v_n|> \delta) + \sum_{p=1}^{P-1}\mathbb{P}(|b_p|>\delta) \hspace{6cm}(\text{union bound})\\
    & \leq & 2N\exp\left(-\frac{\delta^2}{2\cdot [2(N+P-1)/NP]}\right) + 2(P-1)\exp\left(-\frac{\delta^2}{2\cdot (4/N)}\right) ~~(\text{Chernoff bound})\\
    & = & 2N\exp\left(-\frac{NPm^{-1+2\varepsilon}}{4(N+P-1)}\right)+2(P-1)\exp\left(-\frac{Nm^{-1+2\varepsilon}}{8}\right).
\end{eqnarray*}
Because $N=\Theta(P)$, the terms $NP/[4(N+P-1)]$ and $N/8$ are of order $m$, hence there is some $c>0$ for which both terms are at least $cm$. For this choice of $c$ we get the upper bound $(2N+2P-2)\exp(-cm^{2\varepsilon})< 2(N+P)\exp(-cm^{2\varepsilon})$, as desired.

We now turn our attention to $\mathscr{R}$. To obtain a tight estimate of the integral $\int_{\mathscr{R}} 4^{NP}\exp(-Q)W d\bsv d\bsb$, we rely once again on Taylor's theorem. Let $c_0\in(0,\pi)$. Because the Taylor series of $\ln(h(t))$ converges uniformly on any compact interval contained within $(-\pi,\pi)$, there exists $C>0$ such that for all $|t|\leq c_0$, 
\begin{equation*}
\ln(h(t))\geq \ln4 -\frac{t^2}{4}-\frac{t^4}{96}-Ct^6.
\end{equation*}
On $\mathscr{R}$, each $t_{np}$ satisfies $|t_{np}|\leq |v_n|+|b_p|\leq 2\delta$. Thus, for large enough $m$, we have $c_0\geq 2m^{-1/2+\varepsilon}=2\delta$, and so 
\begin{equation*}
\sum_{n=1}^N\sum_{p=1}^P\ln(h(t_{np})) \geq NP\ln4 - Q(\mathbf{v},\mathbf{b})-Q_4(\mathbf{v},\mathbf{b}) - E_6,
\end{equation*}
where 
\begin{equation*}
Q_4(\mathbf{v},\mathbf{b})=\frac{1}{96}\sum_{n=1}^N\sum_{p=1}^P t_{np}^4;\hspace{1cm} E_6=C\sum_{n=1}^N\sum_{p=1}^Pt_{np}^6.
\end{equation*}
Our choice of $\varepsilon=1/12$ guarantees that the contribution by $E_6$ is negligible:
\begin{equation*}
0 \leq E_6\leq C\cdot NP(2\delta)^6 \asymp m^2\cdot m^{-3+6\varepsilon}=m^{-1+6\varepsilon}=\frac{1}{\sqrt{m}}.
\end{equation*}
Thus 
\begin{align*}
\int_{\mathscr{R}} 4^{NP}\exp(-Q)W \, d\bsv \, d\bsb
&= \int_{\mathscr{R}} 4^{NP}\exp(-Q)\exp(-Q_4)\exp(-E_6)\, d\bsv \, d\bsb \\
&\in \Big[
4^{NP}e^{-\frac{1}{\sqrt{m}}}
\int_{\mathscr{R}}\exp(-Q)\exp(-Q_4)\, d\bsv \, d\bsb,\;
4^{NP}\int_{\mathscr{R}}\exp(-Q)\exp(-Q_4)\, d\bsv \, d\bsb
\Big].
\end{align*}
% \begin{align*}
% \int_{\mathscr{R}} 4^{NP}\exp(-Q)W d\bsv d\bsb & = &  \int_{\mathscr{R}} 4^{NP}\exp(-Q)\exp(-Q_4)\exp(-E_6) d\bsv d\bsb \\
% & \in & \left[4^{NP}e^{-\frac{1}{\sqrt{m}}}\int_\mathscr{R}\exp(-Q)\exp(-Q_4)d\bsv d\bsb,~~ 4^{NP}\int_\mathscr{R}\exp(-Q)\exp(-Q_4)d\bsv d\bsb\right].& 
% \end{align*}
Writing $\int_\mathscr{R}\exp(-Q)\exp(-Q_4)d\bsv d\bsb = Z\mathbb{E}_\mu[\exp(-Q_4)I_\mathscr{R}]$, we see that 
\begin{align*}
Z\mathbb{E}_\mu[\exp(-Q_4)]~\geq~~~~Z\mathbb{E}_\mu[\exp(-Q_4)I_\mathscr{R}]  &~= & Z\left(\mathbb{E}_\mu[\exp(-Q_4)] - \mathbb{E}_\mu[\exp(-Q_4)I_{\mathbb{R}^{N+P-1}\setminus \mathscr{R}}]\right) \\
    &~\geq & Z\left(\mathbb{E}_\mu[\exp(-Q_4)] - \mu(\mathbb{R}^{N+P-1}\setminus \mathscr{R})\right) \\
    &~\geq & Z(\mathbb{E}_\mu[\exp(-Q_4)]-O(m)\exp(-cm^{2\varepsilon})),
\end{align*}
where the left inequality follows from $e^{-Q_4}>0$, the first right inequality follows from $e^{-Q_4}\leq 1$, and the second right inequality follows from the Chernoff bound used earlier. Because $O(m)\exp(-cm^{2\varepsilon})\rightarrow 0$ as $m\to\infty$, this shows that $Z\mathbb{E}_\mu[\exp(-Q_4)I_\mathscr{R}]$ is approximately $Z\mathbb{E}_\mu[\exp(-Q_4)]$, so it suffices to approximate the latter quantity. 

Write $Y=Q_4-\mathbb{E}_\mu[Q_4]$, so that $Z\mathbb{E}_\mu[\exp(-Q_4)] = Z\exp(-\mathbb{E}_\mu[Q_4])\cdot\mathbb{E}_\mu[\exp(-Y)]$. We claim that the following statements are true.
\begin{enumerate}
\item $\mathbb{E}_\mu[Q_4]=(N+P-1)^2/(8NP)$.
\item $\mathbb{E}_\mu[\exp(-Y)]=1+O(1/m)$. 
\end{enumerate}
For Statement 1, owing to $\mathbb{E}_\mu[Q_4]=1/96\times \sum_{n=1}^N\sum_{p=1}^P \mathbb{E}_\mu[t_{np}^4]$, we must compute $\mathbb{E}_\mu[t_{np}^4]$. We do so in two cases: $p=P$ and $p\leq P-1$. When $p=P$, $t_{np}=v_n$. Recall that a Gaussian random variable $X\sim N(0,\sigma^2)$ satisfies $\mathbb{E}[X^4]=3\sigma^4$, so by Lemma \ref{lemma:covariance-matrix-properties}, $\mathbb{E}_\mu[t_{np}^4] = 12(N+P-1)^2/(NP)^2$. When $p\leq P-1$, $t_{np}=v_n+b_p$. Both $v_n$ and $b_p$ are univariate mean-zero Gaussian, so their sum is univariate mean-zero Gaussian with variance $\text{Var}(v_n+b_p)=\text{Var}(v_n)+\text{Var}(b_p)+2\text{Cov}(v_n,b_p)=2(N+P-1)/(NP)$, where the covariance and variance quantities are derived in Lemma \ref{lemma:covariance-matrix-properties}. So $v_n+b_p\sim N(0,2(N+P-1)/(NP))$ and $\mathbb{E}_\mu[t_{np}^4]=12(N+P-1)^2/(NP)^2$ in this case. Plugging these expressions back into $\mathbb{E}_\mu[Q_4]$, we obtain 
\begin{equation*}
\mathbb{E}_\mu[Q_4]=  \frac{1}{96}\cdot NP \cdot \frac{12(N+P-1)^2}{(NP)^2} = \frac{(N+P-1)^2}{8NP},
\end{equation*}
as desired.

For Statement 2, first observe that $\mathbb{E}_\mu[\exp(-Y)]\geq 1$, because $\exp(-y)\geq 1-y$ for any real $y$ and we can take expectations over $Y$. We thus focus on obtaining an upper bound on $\mathbb{E}_\mu[\exp(-Y)]$ of the form $1+\text{``extra terms''}$, and show that the extra terms decay like $O(1/m)$. We apply Taylor's theorem yet again, this time with integral remainder. For any $y\in\mathbb{R}$, 
\begin{equation*}
e^{-y}=1-y+\frac{y^2}{2} + R_3(y),\hspace{1cm} R_3(y) = \frac{y^3}{2}\int_{0}^1(1-s)^2e^{-sy}ds.
\end{equation*}
We bound $R_3(y)$ above as follows. Observe that if $y\geq 0$, then $\exp(-sy)\leq 1$ for all $s\in[0,1]$. On the other hand, if $y<0$ then $-sy\leq -y$ so $\exp(-sy)\leq \exp(-y)$. Hence, for any $y\in\mathbb{R}$ and $s\in[0,1]$, $\exp(-sy)\leq 1+\exp(-y)$. Consequently,
\begin{equation*}
|R_3(y)| \leq \frac{|y|^3}{2}\int_{0}^1(1-s)^2|e^{-sy}|ds \leq \frac{|y|^3}{2}(1+e^{-y})\int_{0}^1(1-s)^2ds =\frac{|y|^3(1+e^{-y})}{6},
\end{equation*}
where the last equality follows from $\int_0^1(1-s)^2ds=1/3$. Taking expectation over $Y$ and applying $\mathbb{E}_\mu[Y]=0$, we obtain
\begin{equation}
\mathbb{E}_\mu[e^{-Y}] \leq 1+\frac{\mathbb{E}_\mu[Y^2]}{2} + \frac{\mathbb{E}_\mu[|Y|^3(1+e^{-Y})]}{6}.\label{eq:taylor-upper-exp-Y}
\end{equation}
We next estimate each quantity involving an expectation on the right hand side of Eq.~\eqref{eq:taylor-upper-exp-Y}, where we must show that their overall contribution is $O(1/m)$. 

We first estimate $\mathbb{E}_\mu[Y^2]$, which is $\text{Var}_\mu(Q_4)$ because $Y$ is the mean-centered version of $Q_4$. Since $Q_4$ is a sum of $t_{np}^4$'s, $\text{Var}_\mu(Q_4)$ is, up to a scalar multiple, equal to the sum $\sum_{n_1,n_2,p_1,p_2} \text{Cov}_\mu(t_{n_1p_1}^4,t_{n_2p_2}^4)$. Recall that each $t_{np}$ has variance $2(N+P-1)/(NP)$. By Lemma \ref{lemma:covariance-gaussian-4th}, for jointly Gaussian random variables $(T_1,T_2)$ with shared variance $\sigma^2$ and correlation $\rho$, $\text{Cov}(T_1^4,T_2^4)=\sigma^8(72\rho^2+24\rho^4)$. By applying Lemma \ref{lemma:covariance-matrix-properties} (Properties 2 and 3), we obtain the following values of $\rho$ for four cases.
\begin{itemize}
\item If $(n_1,p_1)$ and $(n_2,p_2)$ share neither row nor column index, then $\text{Corr}(t_{n_1p_1},t_{n_2p_2})=-1/(N+P-1)=O(1/m)$.  
\item If $(n,p_1)$ and $(n,p_2)$ share a row index $n$, then $\text{Corr}(t_{np_1},t_{np_2})=(2N-1)/[2(N+P-1)]=O(1)$.
\item If $(n_1,p)$ and $(n_2,p)$ share a column index $p$, then $\text{Corr}(t_{n_1p},t_{n_2p})=(P-1)/(N+P-1)=O(1)$.
\item In the diagonal case, $\text{Corr}(t_{np},t_{np})=1$.
\end{itemize}
Now count all pairs of $(n,p)$ belonging to each scenario and estimate their contribution to the total sum, using $\text{Cov}(T_1^4,T_2^4)=\sigma^8(72\rho^2+24\rho^4)$ with $\rho$ just worked out and $\sigma^2=2(N+P-1)/(NP)=O(1/m)$. 
\begin{itemize}
\item There are $NP$ diagonal terms, each $\text{Var}(t_{np}^4)=O(\sigma^8)=O(m^{-4})$. This contributes $O(m^{-2})$.
\item There are $N{P\choose 2}\asymp m^3$ row-sharing pairs, each covariance $\Theta(\sigma^8)=\Theta(m^{-4})$. This contributes $\Theta(m^{-1})$.
\item There are $P{N\choose 2}\asymp m^3$ row-sharing pairs, each covariance $\Theta(\sigma^8)=\Theta(m^{-4})$. This contributes $\Theta(m^{-1})$.
\item There are $\Theta(m^4)$ pairs that share neither row nor column index, each covariance $O(\sigma^8\rho^2)=O(m^{-6})$. This contributes $O(m^{-2})$.
\end{itemize}
Thus the total sum is 
\begin{equation*}
\mathbb{E}_\mu[Y^2]=\text{Var}_\mu(Q_4)=\sum_{n_1,n_2,p_1,p_2}\text{Cov}_\mu(t_{n_1p_1}^4,t_{n_2p_2}^4) = \Theta\left(\frac{1}{m}\right).
\end{equation*}
We next estimate $\mathbb{E}_\mu[|Y|^3(1+\exp(-Y))]/6$. Because $Q_4$ is non-negative, $Y\geq -\mathbb{E}_\mu[Q_4]=-(N+P-1)^2/(8NP)$. So $\exp(-Y)\leq \exp((N+P-1)^2/(8NP))=\Theta(1)$, i.e., there exists a constant $C_2>0$ such that $\mathbb{E}_\mu[|Y|^3(1+\exp(-Y))]/6\leq C_2\mathbb{E}_\mu[|Y|^3]/6$. We shall show that $\mathbb{E}_\mu[|Y|^3]= O(1/m)$. By H\"{o}lder's inequality with exponents $p=4/3$ and $q=4$ (so $p^{-1}+q^{-1}=1$),
\begin{equation}
\mathbb{E}_\mu[|Y|^3] = \mathbb{E}_\mu[|Y|^3\cdot 1] \leq \left(\mathbb{E}_\mu[|Y|^{3p}]\right)^{\frac{1}{p}}\left(\mathbb{E}_\mu[1^{q}]\right)^{\frac{1}{q}}=\left(\mathbb{E}_\mu[Y^4]\right)^{\frac{3}{4}}.\label{eq:holder}
\end{equation}
Now, we apply a Gaussian hypercontractivity argument (see, e.g., \cite[Chapter 5 \& Theorem 5.10]{janson1997gaussian}): because $\mu$ is a centered non-degenerate Gaussian law on $\mathbb{R}^{N+P-1}$, there exists an invertible matrix $M$ and a standard Gaussian vector $G\sim N(\mathbf{0},\mathbf{I}_{N+P-1})$ such that $(\mathbf{v},\mathbf{b})\overset{d}{=} MG$. Each $t_{np}$ is a linear combination of entries of $(\mathbf{v},\mathbf{b})$, so by composing these linear maps we observe that $t_{np}^4$ is a degree-$4$ polynomial in $G$. This shows that $Y$ belongs to the finite sum of Wiener chaoses up to order $4$. By the hypercontractivity inequality for finite sums of chaoses (\cite[Proposition 2.6]{nourdin2010invariance}), 
\begin{equation}
(\mathbb{E}_\mu[Y^4])^{\frac{1}{4}}=||Y||_{L^4(\mu)}\leq (4-1)^{\frac{4}{2}}||Y||_{L^2(\mu)} = 9(\mathbb{E}_\mu[Y^2])^{\frac{1}{2}}.\label{eq:hypercontractivity}
\end{equation}
Combining Eqs.~\eqref{eq:holder} and \eqref{eq:hypercontractivity}, we get
\begin{equation*}
\mathbb{E}_\mu[|Y|^3] \leq \left(9(\mathbb{E}_\mu[Y^2])^{\frac{1}{2}}\right)^3=729(\mathbb{E}_\mu[Y^2])^{\frac{3}{2}}=\Theta\left(\frac{1}{m^{3/2}}\right),%\label{eq:combined_holder_hypercontract}
\end{equation*}
which is clearly $O(1/m)$. Thus, plugging these estimates back into Eq.~\eqref{eq:taylor-upper-exp-Y} we obtain $\mathbb{E}_\mu[\exp(-Y)]=1+O(1/m)$, which establishes Statement 2.

Putting the pieces together, we have shown that 
\begin{eqnarray*}
|\mathscr{A}_{12}| & = & \frac{1}{(2\pi)^{N+P-1}}\int_{[-\pi,\pi]^{N+P-1}}4^{NP}\exp(-Q)Wd\bsv d\bsb \\
                   & = & \frac{1}{(2\pi)^{N+P-1}}\int_{\mathscr{R}^c}4^{NP}\exp(-Q)Wd\bsv d\bsb + \frac{1}{(2\pi)^{N+P-1}}\int_{\mathscr{R}}4^{NP}\exp(-Q)Wd\bsv d\bsb\\
                   %& = & \frac{4^{NP}Z}{(2\pi)^{N+P-1}}\cdot O(m)\exp(-cm^{2\varepsilon}) + \frac{4^{NP}Z}{(2\pi)^{N+P-1}}e^{-\frac{1}{\sqrt{m}}} e^{-\frac{(N+P-1)^2}{8NP}}\left(1+O\left(\frac{1}{m}\right)\right) \\
                   & = & \frac{4^{NP}N^{(P-1)/2}P^{(N-1)/2}}{\pi^{(N+P-1)/2}}\left[O(m)\exp(-cm^{2\varepsilon}) + e^{-\frac{(N+P-1)^2}{8NP}}\left(1+O\left(\frac{1}{\sqrt{m}}\right)\right)\right]. 
\end{eqnarray*}
Taking base-$2$ logs on both sides, we obtain Eq.~\eqref{eq:a12_approx_semiregular}.
\end{proof}

\noindent\textbf{Remarks on Theorem \ref{thm:growth-rate-a12}.} Another way to set up a generating function in Step 1 of the proof is to obtain a \emph{weighted} count of doubly constrained $\{0,1,2\}$-valued matrices. Concretely, $\mathbf{X}$ is fixed by the two constraints, so one only has to count all valid $\mathbf{A}$'s. Overlaying the ``left'' and ``right'' half matrices --- both $N\times P$ binary matrices --- of $\mathbf{A}$ and taking the sum of entries, the constraints translate into imposing row sums $P$ and column sums $N$ on a $N\times P$ matrix with entries in $\{0,1,2\}$, with any such matrix $\mathbf{A}$ contributing a weight $2^{\#(\mathbf{A},1)}$ to the total count, where $\#(\mathbf{A},1)$ is the number of ones appearing in $\mathbf{A}$. The generating function for this is $\prod_{p=1}^P\prod_{n=1}^N\left(1+2x_ny_p+x_n^2y_p^2\right)=\prod_{p=1}^P\prod_{n=1}^N\left(1+x_ny_p\right)^2$ and the coefficient to extract is $x_1^P\cdots x_N^Py_1^N\cdots y_P^N$. A similar change of variables leads to an integral like RHS of Eq.~\eqref{eq:reduced_integral}. While completing this work, we were informed about \cite{isaev2018complex} that provides an insightful viewpoint: our steps provide an instantiation of the complex martingale framework, where after removing the null directions of the saddle, the Cauchy integral becomes a Gaussian integral with perturbation $f=-Q_4$. Our intermediate step of constructing invariant changes of measure achieves the same goal as their Lemma 4.6: to pull out the contribution of the nullspace of a rank-deficient Gaussian precision matrix. Our asymptotically constant correction term $-\mathbb{E}_\mu[Q_4]$ corresponds to the quantity $\mathbb{E}[f(X)]$ in their Theorem 3.3, while the remaining variance and truncation effects are $o(1)$ under their martingale bounds. One notable difference is that their general approach relies on verifying the existence of a whitening matrix $T$ --- see their Theorems 4.3 and 4.4 --- satisfying technical conditions involving $||T||_\infty$ and $||T||_1$ (which in our setting amounts to controlling the moments of the multivariate truncated Gaussian), whereas we use a hypercontractivity argument to provide explicit control of the higher-order remainder terms, allowing quantification of the $o(1)$ contribution in terms of $N$ and $P$.   

\subsection{Different Independence Heuristic for Admixed Arrays}
\label{subsec:probabilistic_interpretation}

Theorem \ref{thm:growth-rate-a12} admits a probabilistic interpretation. Suppose one samples an admixed array uniformly at random from a set of admixed arrays. The first question is to decide which set to sample from. One might be tempted to use $\mathscr{A}_0$, which has cardinality $2^{4NP}$ (there are $4NP$ cells across $\mathbf{A}$ and $\mathbf{X}$ that can take on values $0$ or $1$). A more appropriate choice is to use an analogue of the normalization factor underpinning the independence heuristic of semi-regular constrained binary and integer matrix enumeration (see Proposition \ref{prop:indep-heur}). This set corresponds to the \emph{smallest} {\red ``common ambient space'' of admixed arrays that properly includes the constraints characterizing $\mathscr{A}_1$ and $\mathscr{A}_2$}. For our semi-regular $1/2$ case, this is the set of admixed arrays $[\mathbf{A},\mathbf{X}]$ for which $\mathbf{A}$ contains $NP$ ones and there are no constraints on $\mathbf{X}$. Thus, we set the normalization factor to the size of this minimal unconstrained superset, $D=2^{2NP}{2NP\choose NP}$.

Sampling uniformly from the minimal unconstrained superset, the probabilities of observing an array with row local ancestry tallies $A_{n\cdot}=P$ (equivalently, $\overline{a_{n\cdot}}=1/2$), an array with ancestry-specific allele dosages $\phi_{p,0}=\phi_{p,1}=N$ (equivalently, $f_{p,0}=f_{p,1}=1/2$), and an array with both these constraints, are respectively $p_1=D^{-1}|\mathscr{A}_1|,p_2=D^{-1}|\mathscr{A}_2|$ and $p_{12}=D^{-1}|\mathscr{A}_{12}|$. Letting the corresponding events be denoted by $\mathcal{E}_1,\mathcal{E}_2$ and $\mathcal{E}_1\wedge \mathcal{E}_2$, one might ask whether $\mathcal{E}_1$ and $\mathcal{E}_2$ are independent, which if true, would imply $p_1p_2=\mathbb{P}(\mathcal{E}_1)\cdot \mathbb{P}(\mathcal{E}_2)=\mathbb{P}(\mathcal{E}_1\wedge \mathcal{E}_2)=p_{12}$. Our theorems imply that, at least in the $N=\Theta(P)$ regime, $\mathcal{E}_1$ and $\mathcal{E}_2$ are \emph{asymptotically negatively correlated}: $|\mathscr{A}_{12}|<D^{-1}|\mathscr{A}_1||\mathscr{A}_2|$. We state this more precisely below.
\begin{Cor}[Independence heuristic overcounts by $e^{1/4}$ factor]
\label{cor:indep-heur-semiregular}
Let $\mathscr{A}_{12}$ be the family of all admixed arrays on $N\times 2P$ binary matrix pairs, such that all row local ancestry tallies are equal to $N$, and all ancestry-specific allele dosages sum to $P$. Equivalently, the normalized marginal constraints $\overline{a}=f_0=f_1=1/2$. Then there is a correction factor $k_{N,P}>0$ satisfying $|\mathscr{A}_{12}|=k_{N,P}D^{-1}|\mathscr{A}_1||\mathscr{A}_2|$, with $\lim_{N,P\to\infty}k_{N,P}=1/\sqrt[4]{e}\approx 0.779$.
\end{Cor}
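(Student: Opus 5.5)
The plan is to compute the independence-heuristic quantity $D^{-1}|\mathscr{A}_1||\mathscr{A}_2|$ in closed form using Propositions \ref{prop:a1} and \ref{prop:a2}. We then expand it with Stirling's formula to second order and compare it with the asymptotic expansion of $|\mathscr{A}_{12}|$ in Eq.~\eqref{eq:a12_approx_semiregular} of Theorem \ref{thm:growth-rate-a12}. Throughout we work in the regime of that theorem, namely $N,P\to\infty$ with $N=\Theta(P)$.

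\textbf{Step 1: the heuristic in closed form.} With $a_{n\cdot}=P$, Proposition \ref{prop:a1} gives $|\mathscr{A}_1|={2P\choose P}^N2^{2NP}$. With $\phi_{p,0}=\phi_{p,1}=N$, Eq.~\eqref{eq:a2_secondexp} gives $|\mathscr{A}_2|=\prod_p{2N\choose 2N}{2N\choose N}2^{0}={2N\choose N}^P$. Since $D=2^{2NP}{2NP\choose NP}$, we obtain
\[
D^{-1}|\mathscr{A}_1||\mathscr{A}_2|=\frac{{2P\choose P}^N{2N\choose N}^P}{{2NP\choose NP}}.
\]
We then define $k_{N,P}$ as $|\mathscr{A}_{12}|$ divided by this quantity. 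It is positive because $|\mathscr{A}_{12}|>0$; for example, the array in which every cell pair equals $(0,1)$ belongs to $\mathscr{A}_{12}$.

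\textbf{Step 2: Stirling expansion.} We use the central binomial expansion ${2n\choose n}=\frac{4^n}{\sqrt{\pi n}}\exp\!\left(-\frac{1}{8n}+O(n^{-2})\right)$. Raising this to the $N$th and $P$th powers gives
\[
{2P\choose P}^N{2N\choose N}^P=\frac{4^{2NP}}{(\pi P)^{N/2}(\pi N)^{P/2}}\exp\!\left(-\frac{N}{8P}-\frac{P}{8N}+O\!\left(\frac{N}{P^2}+\frac{P}{N^2}\right)\right).
\]
Here the $1/(8n)$ corrections must be kept, since $N/(8P)$ is $\Theta(1)$. The error terms are $O(1/m)$ because $N=\Theta(P)$. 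The denominator is ${2NP\choose NP}=\frac{4^{NP}}{\sqrt{\pi NP}}(1+O(1/(NP)))$. Therefore
\[
D^{-1}|\mathscr{A}_1||\mathscr{A}_2|=\frac{4^{NP}\sqrt{\pi NP}}{(\pi P)^{N/2}(\pi N)^{P/2}}\exp\!\left(-\frac{N}{8P}-\frac{P}{8N}+O(1/m)\right).
\]

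\textbf{Step 3: compare.} We exponentiate Eq.~\eqref{eq:a12_approx_semiregular}, converting from base-$2$ logarithms. This gives $|\mathscr{A}_{12}|$ as the same prefactor $4^{NP}\sqrt{\pi NP}/((\pi P)^{N/2}(\pi N)^{P/2})$ times $\exp\!\left(-\frac{(N+P-1)^2}{8NP}+O(m^{-1/2})\right)$. The prefactors cancel exactly, so
\[
\ln k_{N,P}=-\frac{(N+P-1)^2}{8NP}+\frac{N}{8P}+\frac{P}{8N}+O(m^{-1/2}).
\]
Expanding the square gives $(N+P-1)^2=N^2+P^2+2NP-2(N+P)+1$. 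Hence
\[
\frac{(N+P-1)^2}{8NP}=\frac{N}{8P}+\frac{P}{8N}+\frac14+O(1/m),
\]
so $\ln k_{N,P}=-\tfrac14+O(m^{-1/2})\to-\tfrac14$. This yields $\lim k_{N,P}=e^{-1/4}$.

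\textbf{Main obstacle.} The main point of care is bookkeeping of the $\Theta(1)$ terms. The $N/(8P)$ and $P/(8N)$ pieces from the Stirling corrections must cancel precisely against the corresponding pieces of $\mathbb{E}_\mu[Q_4]$ from Theorem \ref{thm:growth-rate-a12}, so that only the constant $1/4$ survives. All remaining errors must be uniformly $o(1)$, which is where $N=\Theta(P)$ is used. Everything else follows directly from results already established in the paper.
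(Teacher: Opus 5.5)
Your proposal is correct and takes essentially the same route as the paper: closed forms from Propositions \ref{prop:a1} and \ref{prop:a2}, the central-binomial Stirling expansion keeping the $1/(8m)$ terms, and cancellation against $(N+P-1)^2/(8NP)$ from Theorem \ref{thm:growth-rate-a12} in the $N=\Theta(P)$ regime. The differences are cosmetic: you absorb the $O(1/m)$ pieces instead of keeping the exact $-(N-1)(P-1)/(4NP)$ form, and you add an explicit check that $k_{N,P}>0$ via the all-$(0,1)$ array.
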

We directly verify Corollary \ref{cor:indep-heur-semiregular} by comparing $\log(|\mathscr{A}_{12}|)=\alpha_{12}$ and $\log(D^{-1}|\mathscr{A}_{1}||\mathscr{A}_2|)=\alpha_1+\alpha_2-\log D$. Note from Propositions \ref{prop:a1} and \ref{prop:a2} that the semi-regular $1/2$ setting gives $\alpha_1=N \log {2P\choose P} + 2NP$ and $\alpha_2=P\log{2N\choose N}$. Using a detailed Stirling expansion (derived from 6.1.37 in Chapter 6 of \cite{abramowitz1965handbook})
\begin{equation*}
\log{2m\choose m} = 2m - \frac{1}{2}\log(\pi m) - \frac{1}{8m}\log(e) + O(m^{-3}),
\end{equation*}
and applying it with $m=P,m=N$ and $m=NP$, we get 
\begin{equation}
\alpha_1+\alpha_2-\log D = 2NP - \frac{1}{2}\left[N\log(\pi P) + P \log(\pi N) - \log(\pi NP)\right] - \left(\frac{N}{8P}+\frac{P}{8N}-\frac{1}{8NP}\right)\log(e) + o(1).\label{eq:indep-prod}
\end{equation}
Comparing Eq.~\eqref{eq:indep-prod} with Eq.~\eqref{eq:a12_approx_semiregular} in Theorem \ref{thm:growth-rate-a12} for $\alpha_{12}$ and ignoring the terms that are $o(1)$, we obtain their difference,
\begin{equation*}
\left[\frac{(N+P-1)^2}{8NP}-\left(\frac{N}{8P}+\frac{P}{8N}-\frac{1}{8NP}\right)\right]\log(e)=\frac{(N-1)(P-1)}{4NP}\log(e).
\end{equation*}
Therefore
\begin{eqnarray*}
\log(k_{N,P}) &=&\alpha_{12}-(\alpha_1+\alpha_2-\log{D}) \\
              &=& -\frac{(N-1)(P-1)}{4NP}\log(e) + o(1) \overset{N,P\to\infty}{\longrightarrow} -\frac{1}{4}\log(e),
\end{eqnarray*}   
which implies the correction factor after exponentiation. We verify this numerically in the next Section.

\subsection{Algorithms for Exact Computation and Numerical Comparisons}
\label{subsec:algorithms}

To evaluate our approximation (Eq.~\eqref{eq:a12_approx_semiregular}), we implement and extend the \software{EXACT} algorithm \cite{miller2013exact}, which enumerates constrained binary matrices efficiently using dynamic programming and the Gale-Ryser criterion (see Algorithm \ref{alg:geta12-parallel} in Appendix \ref{appsec:algorithm}). We use \software{EXACT} to compute the $|\mathscr{A}|$ terms appearing in Eq.~\eqref{eq:a12_exact} of Proposition \ref{prop:a12}, while summing over the vectors in the permissible set $\mathscr{S}$ in Eq.~\eqref{eq:a12_exact} by parallelizing over the first coordinate of these vectors. A crucial efficiency driver of \software{EXACT} is a \emph{memoization} principle involving the caching of conjugate vectors. Conjugate vectors track the number of times a particular integer appears in a target vector, and are repeatedly used in the dynamic programming recursion. Because any two problems that arrive at the same conjugate vector will have identical counts for the remaining subproblem, avoiding repeated calculations on a previously seen conjugate vector drastically reduces the number of operations. We apply the same memoization strategy independently to each thread in our parallelization, along with each unique conjugate vector within the thread. Table \ref{table:algorithm-comparison} summarizes key features and differences in the \software{EXACT} algorithm for binary matrix enumeration and our parallelized algorithm for admixed array enumeration.        
\begin{table}[ht]
\centering
\caption{Differences and similarities between \software{EXACT} and our algorithm for enumerating doubly constrained admixed arrays.}
\label{table:algorithm-comparison}
\small
\begin{tabular}{@{}lll@{}}
\toprule
& \software{EXACT} \cite{miller2013exact} & Admixed Array Exact Enumeration \\
\midrule
Counts & Binary matrices & Admixed arrays $[\mathbf{A}, \mathbf{X}]$ \\
Constraints & $(\mathbf{r}, \mathbf{c})$ & $(\bsa, \bsphi_0, \bsphi_1)$ \\
Core method & Dynamic programming with memoization & Weighted sum over \software{EXACT} calls \\
Parallelism & None & Over first-coordinate pairs of feasible set vector \\
Memo scope & Global & Per thread, per conjugate vector \\
\bottomrule
\end{tabular}
\end{table}

Using our algorithm, we compute exact values of $|\mathscr{A}_{12}|$ in the semi-regular $1/2$ setting for small values of $(N,P)$, in addition setting $N=P$. By comparing them against Eq.~\eqref{eq:a12_approx_semiregular} (quantities are reported in Table \ref{table:a12_asymptotics}), we observe good quantitative agreement. For example, when $N=P=9$, the saddle-point approximation is about $1.01$ times the exact value (difference in base-$2$ $\log$ scale is $0.015$). Additionally, we calculate the difference between the independence heuristic described in Section \ref{subsec:probabilistic_interpretation} and the saddle-point approximation, the latter converging to $\alpha_{12}$ in the $N=\Theta(P)$ regime. We observe that the difference approaches $-\log(e)/4\approx -0.36067$, the base-$2$ logarithm of the correction factor. For example, when $N=P=10^4$ (a scale comparable to contemporary genetic sequencing data) the difference is $-0.36060$.   
\begin{table}[!ht]
\caption{Comparison of exact and approximate values of $|\mathscr{A}_{12}|$, in the semi-regular $1/2$ case with $N=P$. Values in the saddle-point approximation column are computed using Eq.~\eqref{eq:a12_approx_semiregular}, including all terms except for the $O(1/\sqrt{m})$ quantity. Values in the right-most column are computed by subtracting $(\alpha_1+\alpha_2-\log D)$ from the quantity in the saddle-point approximation column. Exact values of $\alpha_{12}$ for $N=P\in\{10^2,10^3,10^4\}$ were not computed owing to a prohibitively long runtime.}
\label{table:a12_asymptotics}
\vspace{0.2cm}
\centering
\resizebox{\columnwidth}{!}{%
\begin{tabular}{|c|c|c|c|}
\hline
\multicolumn{1}{|c|}{\begin{tabular}[c]{@{}c@{}}Dimension of admixed array\\ ($N=P$)\end{tabular}} & \multicolumn{1}{c|}{$\alpha_{12}=\log|\mathscr{A}_{12}|$} & \multicolumn{1}{c|}{\begin{tabular}[c]{@{}c@{}}Saddle-point approximation \\ (SPA) of $\alpha_{12}$\end{tabular}} & \multicolumn{1}{c|}{\begin{tabular}[c]{@{}c@{}}Difference of SPA \\ with $(\alpha_1+\alpha_2-\log D)$\end{tabular}} \\ \hline
2                                                                                                & 4.16993                       & 4.11700 & $-0.09357$  \\ \hline
3                                                                                                & 10.20945                      & 10.20040  & $-0.16191$  \\ \hline
4                                                                                                & 19.65300                      & 19.66748  & $-0.20380$  \\ \hline
5                                                                                                & 32.67747                      & 32.69626 & $-0.23142$ \\ \hline
6                                                                                                & 49.36714                      & 49.38583   & $-0.25088$ \\ \hline
7                                                                                                & 69.78153                      & 69.79918 & $-0.26529$ \\ \hline
8                                                                                                & 93.96335                      & 93.97978 & $-0.27637$ \\ \hline
9                                                                                                & 121.94419                              & 121.95946 &  $-0.28516$  \\ \hline
$\vdots$                                                                                                & $\vdots$                              & $\vdots$ & $\vdots$                                                                                                  \\ \hline
$10^2$                                                                                                & -                             & $1.91772\times10^4$ &  $-0.35350$ \\ \hline
$10^3$                                                                                                & -                             & $1.98839\times 10^6$ &  $-0.35995$  \\ \hline
$10^4$                                                                                                & -                             & $1.99851\times 10^8$ &  $-0.36060$  \\ \hline
\end{tabular}
}
\end{table}

\section{Discussion}
\label{sec:discussion}

We have introduced admixed arrays, a class of discrete structures arising from analyses of genetic data from admixed populations. For two families of marginal constraints induced by standard genetic summaries, we obtained exact expressions for the number of admissible arrays and derived corresponding asymptotic formulas. These results provide an exact finite-size criterion, and a sharp entropy-based approximate criterion, for determining when row constraints are more restrictive than paired column constraints. We further quantified the error fraction of the entropy approximation in the semi-regular case. It is possible that the $\sqrt{\log(N)/N+\log(P)/P}$ error fraction holds in general.

{\red We have also studied the asymptotic enumeration of admixed arrays subject to both row and column constraints. In the semi-regular $1/2$ setting, we derived a detailed asymptotic expansion of the doubly constrained set using a saddle-point approximation, together with nontrivial control of the non-quadratic remainder terms. Critically, in the dense regime $N=\Theta(P)$ with $\overline{a}=f_0=f_1=1/2$, the row- and column-constraint events satisfy the independence heuristic with a different correction factor than those typically observed for binary and integer matrices under analogous constraints and asymptotic regimes. For general choices of $(\overline{a},f_0,f_1)$ satisfying $\overline{a}=f_0=f_1=\Theta(1)$, I conjecture that the independence heuristic does \emph{not} hold or exist, in part because defining a common ambient space for the normalizing constant $D$ is generally not straightforward. It also remains open how high-dimensional scalings other than $N=\Theta(P)$ and $\overline{a}=f_0=f_1=\Theta(1)$ affect the asymptotic behavior of the doubly constrained set. Investigating analytical and computational strategies for tackling these questions may be of interest to researchers in Analytic Combinatorics of Several Variables \cite{pemantle2024analytic,hackl2023rigorous}.}

Taken together, our work clarifies how admixed arrays fit into the broader picture of constrained combinatorial matrix models discussed in the Introduction. In many classical settings, distinct families of marginal constraints obey an independence heuristic, with deviations from this behaviour driven by non-uniformity and phase transitions. Admixed arrays provide an example of how modest extensions of classical constrained matrix models in an analogously dense and semi-regular setting give rise to different independence heuristics. It remains to be investigated if the findings on two-way admixed arrays will generalize to $\ell$-way admixed arrays, and, more broadly, what the landscape of asymptotic behaviour looks like for this new class of constrained discrete structures.

\section*{Acknowledgements}
\noindent I am grateful to Robin Pemantle for helpful discussions on saddle-point approximation; to Brendan McKay for insightful discussions on the complex martingale method and generating function representations; and to \'Ursula H\'ebert-Johnson and Catharine Lo for comments on an earlier draft of this work. %I am grateful to Catharine Lo and \'Ursula H\'ebert-Johnson for valuable comments on an earlier version of this work.\\
\section*{Conflict of Interest}

\noindent The author declares no conflict of interest.

\section*{Software and Data Availability} 

\noindent Software for counting admixed arrays is available at \url{https://github.com/alanaw1/zagar}.

\printbibliography
\newpage
%%%%%%%%%%%%%%%%%%%%%%%%%%%%%%%%%%%%%%%%%%%%%%%%%%%%%%%%%%%%%%%%%%%%%%%%
%%%%%%%%%%%%%%%%%%%%%%%%%%%%%%% APPENDIX %%%%%%%%%%%%%%%%%%%%%%%%%%%%%%%
%%%%%%%%%%%%%%%%%%%%%%%%%%%%%%%%%%%%%%%%%%%%%%%%%%%%%%%%%%%%%%%%%%%%%%%%
\begin{appendices}
\renewcommand{\thesection}{\Alph{section}}
\titleformat{\section}{\normalfont\Large\bfseries}{Appendix \thesection}{1em}{}

%%%%%%%%%%%%%%%%%%%%%%%%%%%%%%%%%%%%%%%%%%%%%%%%%%%%%
%%%%%%%%%%%% Appendix A: Proofs of Lemmas %%%%%%%%%%%
%%%%%%%%%%%%%%%%%%%%%%%%%%%%%%%%%%%%%%%%%%%%%%%%%%%%%
\section{Proofs of Auxiliary Lemmas}
\label{appsec:aux-lemmas}

\begin{proof}[Proof of Lemma \ref{lemma:theta-function}]
Let $s=1-f_0-f_1$ and write $\log(x)=\ln x/\ln 2$. A direct calculation gives the Hessian
\begin{equation*}
\nabla^2\theta(f_0,f_1)
=\frac{1}{\ln 2}
\begin{bmatrix}
-\left(\frac1{f_0}+\frac1s\right) & -\frac1s\\[4pt]
-\frac1s & -\left(\frac1{f_1}+\frac1s\right)
\end{bmatrix}.
\end{equation*}
This matrix is real and symmetric, so it has two real eigenvalues. Moreover, on the interior $D^\circ=\{f_0>0,f_1>0,s>0\}$, $\text{tr}(\nabla^2\theta(f_0,f_1))=-(1/f_0+1/f_1+2/s) <0$ and $\det(\nabla^2\theta(f_0,f_1))=1/(f_0f_1)+1/(f_0s)+1/(f_1s)>0$. This implies that $\nabla^2\theta(f_0,f_1)$ has two negative eigenvalues and is thus negative definite; this shows $\theta$ is concave. 

Since $\theta$ extends continuously to the compact convex set $\mathscr{D}$ (with the convention $0\ln 0=0$), it achieves a unique global maximum on $\mathscr{D}$. A direction calculation gives the derivative vector
\begin{equation*}
\nabla\theta(f_0,f_1)=(\partial_{f_0}\theta,\partial_{f_1}\theta)=\left(\frac{1}{\ln 2}\ln\frac{s}{f_0}-1,\frac{1}{\ln 2}\ln\frac{s}{f_1}-1\right).
\end{equation*}
Setting each component to zero to obtain the maximizer, we get $s=2f_0=2f_1$, which implies $f_0=f_1=1/4$. Thus 
\begin{equation*}
\max_{(f_0,f_1)\in\mathscr{D}}\theta(f_0,f_1)=\theta\left(\frac{1}{4},\frac{1}{4}\right)=\frac{1}{4}\log4 + \frac{1}{4}\log4 + \frac{1}{2}\log2-\frac{1}{2}=1.
\end{equation*}
\end{proof}

\begin{proof}[Proof of Lemma \ref{lemma:ratio-approx}]
By performing a Taylor expansion of $f(x)=\ln(x)$ around $1/2$, we may simplify the denominator:
\begin{eqnarray*}
\ln\left(\frac{\frac{1}{2}+\sqrt{c\varepsilon}}{\frac{1}{2}-\sqrt{c\varepsilon}}\right) & = & \ln\left(\frac{1}{2}+\sqrt{c\varepsilon}\right) - \ln\left(\frac{1}{2}-\sqrt{c\varepsilon}\right) \\
                            & \approx & \left(\sqrt{c\varepsilon} - \frac{c\varepsilon}{2}+\frac{(c\varepsilon)^{3/2}}{3}\right)-\left(-\sqrt{c\varepsilon} - \frac{c\varepsilon}{2}-\frac{(c\varepsilon)^{3/2}}{3}\right) \\
                            & = & 2\sqrt{c\varepsilon} + \frac{2}{3}(c\varepsilon)^{3/2}.
\end{eqnarray*}
Thus, 
\begin{equation*}
\frac{\varepsilon}{\ln\left(\frac{\frac{1}{2}+\sqrt{c\varepsilon}}{\frac{1}{2}-\sqrt{c\varepsilon}}\right)} \approx \frac{\varepsilon}{2\sqrt{c\varepsilon} + \frac{2}{3}(c\varepsilon)^{3/2}} \approx \frac{1}{2}\sqrt{\varepsilon/c}.
\end{equation*}
\end{proof}

\begin{proof}[Proof of Lemma \ref{lemma:quadratic-upper-bound}]
Consider the function $\psi(x)=\ln(\cos(x))+x^2/2$ defined on $[-\pi/2,\pi/2]$. It satisfies $\psi(0)=0$. Its derivatives are $\psi'(x)=x-\tan(x)$ and $\psi''(x)=-\tan^2(x)$, so at $x=0$ we have $\psi'(0)=0$ and $\psi''(0)=0$, establishing a local maximum at $x=0$. Since $\psi''(x)\leq 0$ on $[-\pi/2,\pi/2]$ with strict equality if and only if $x=0$, we see that $\psi$ is concave on $[-\pi/2,\pi/2]$ and is thus uniquely maximized at $x=0$ and negative at every other value of $x$ --- in other words, $\ln(\cos(x))\leq -x^2/2$ for all $x\in[-\pi/2,\pi/2]$. Setting $x=t/2$ and multiplying the last inequality by $2$ on both sides, we obtain $\ln(h(t)/4)\leq -t^2/4$ as desired. 
\end{proof}

\begin{proof}[Proof of Lemma \ref{lemma:covariance-matrix-properties}]
\underline{Property 1.} We apply Schur's formula:
\begin{eqnarray*}
\det(\mathbf{B}) & = & \det(P\mathbf{I}_N)\cdot\det\left(N\mathbf{I}_{P-1}-\mathbf{1}_{(P-1)\times N}\left(\frac{1}{P}\mathbf{I}_N\right) \mathbf{1}_{N\times (P-1)}\right) \\
                & = & P^N\det\left(N\mathbf{I}_{P-1}-\frac{N}{P}\mathbf{1}_{(P-1)\times (P-1)}\right) \\
                & = & P^N N^{P-1}\det\left(\mathbf{I}_{P-1}-\frac{1}{P}\mathbf{1}_{(P-1)\times (P-1)}\right),
\end{eqnarray*}
so it suffices to show that $\det(\mathbf{I}_{P-1}-\mathbf{1}_{(P-1)\times (P-1)}/P)=1/P$. Since $\mathbf{I}_{P-1}$ and $-\mathbf{1}_{(P-1)\times (P-1)}/P$ commute, they are simultaneously triangularizable, and so $\det(\mathbf{I}_{P-1}-\mathbf{1}_{(P-1)\times (P-1)}/P)=\prod_{p=1}^{P-1}(1+\lambda_p)$, where $\lambda_p$ are the eigenvalues of $-\mathbf{1}_{(P-1)\times (P-1)}/P$. The all-ones matrix $\mathbf{1}_{(P-1)\times (P-1)}$ has eigenvalues $(P-1)$ (multiplicity $1$; eigenvector is $\mathbf{1}_{(P-1)\times 1}$) and $0$ (multiplicity $P-2$; eigenvectors are $\mathbf{e}_p-\mathbf{e}_{P-1}$ for $p\leq P-2$), so $(\lambda_1,\ldots,\lambda_{P-1})=(-(P-1)/P,0,\ldots,0)$, and we obtain $\prod_{p=1}^{P-1}(1+\lambda_p)=1/P$ as desired. 

\underline{Property 2.} Let $\mathbf{S}=N(\mathbf{I}_{P-1}-\mathbf{1}_{(P-1)\times (P-1)}/P)$ be the Schur complement, whose determinant we computed in the proof of Property 1. Applying the block inverse formula, we obtain
\begin{equation}
\mathbf{B}^{-1}=\begin{bmatrix}
(P\mathbf{I}_{N} - \frac{P-1}{N}\mathbf{1}_{N\times N})^{-1} & \mathbf{M}_1 \\
\mathbf{M}_2 & \mathbf{S}^{-1}\label{eq:B-inverse}
\end{bmatrix},
\end{equation}
where $\mathbf{M}_1$ and $\mathbf{M}_2$ are $(P-1)\times N$ and $N\times (P-1)$ matrices irrelevant to our present computation. Recall the Sherman-Woodbury-Morrison formula \cite[p.~65]{golub2013matrix},
\begin{equation*}
(\mathbf{A}+\mathbf{u}\mathbf{v}^T)^{-1}=\mathbf{A}^{-1}-\frac{\mathbf{A}^{-1}\mathbf{u}\mathbf{v}^T\mathbf{A}^{-1}}{1+\mathbf{v}^T\mathbf{A}^{-1}\mathbf{u}}.
\end{equation*}
Setting $\mathbf{A}=\mathbf{I}_{P-1}$, $\mathbf{u}=\mathbf{1}_{(P-1)\times 1}/\sqrt{P}$ and $\mathbf{v}=-\mathbf{u}$, so that $\mathbf{1}_{P-1}-\mathbf{1}_{(P-1)\times(P-1)}/P=\mathbf{A}+\mathbf{u}\mathbf{v}^T$, we obtain the simplification $(\mathbf{1}_{P-1}-\mathbf{1}_{(P-1)\times(P-1)}/P)^{-1}=\mathbf{I}_{P-1}+\mathbf{1}_{(P-1)\times(P-1)}$. Thus $\mathbf{S}^{-1}=(\mathbf{I}_{P-1}+\mathbf{1}_{(P-1)\times(P-1)})/N$, and we obtain $\Sigma_{(N+p),(N+p)}=2(1+1)/N=4/N$. Finally, by setting $\mathbf{A}=P\mathbf{I}_N$, $\mathbf{u}=\sqrt{(P-1)/N}\mathbf{1}_{N\times 1}$ and $\mathbf{v}=-\mathbf{u}$ in the Sherman-Woodbury-Morrison formula and performing similar calculations, we obtain $\Sigma_{n,n}=2(N+P-1)/NP$.

\underline{Property 3.} We work out off-diagonal elements of Eq.~\eqref{eq:B-inverse}. We already computed $\mathbf{S}^{-1}$; this gives $\Sigma_{(N+p_1),(N+p_2)}=2/N$. The Sherman-Woodbury-Morrison formula applied to $(P\mathbf{I}_{N} - \frac{P-1}{N}\mathbf{1}_{N\times N})^{-1}$ yields $(P\mathbf{I}_{N} - \frac{P-1}{N}\mathbf{1}_{N\times N})^{-1}=\mathbf{I}_N/P+(P-1)\mathbf{1}_{N\times N}/(NP)$. This gives $\Sigma_{n_1,n_2}=2(P-1)/(NP)$. By the block inverse formula, the off-diagonal block matrices in Eq.~\eqref{eq:B-inverse} are
\begin{eqnarray*}
\mathbf{M}_1 & = & -\left(P\mathbf{I}_{N} - \frac{P-1}{N}\mathbf{1}_{N\times N}\right)^{-1}\mathbf{1}_{N\times (P-1)} \left(\frac{1}{N}\mathbf{I} _{P-1}\right), \\
\mathbf{M}_2 & = & -\mathbf{S}^{-1}\mathbf{1}_{(P-1)\times N}\left(\frac{1}{P}\mathbf{I}_N\right).
\end{eqnarray*}
Because $\mathbf{B}^{-1}$ is symmetric (it is, up to a scaling factor, a covariance matrix), it suffices to simplify $\mathbf{M}_2$. 
\begin{eqnarray*}
-\mathbf{S}^{-1}\mathbf{1}_{(P-1)\times N}\left(\frac{1}{P}\mathbf{I}_N\right) & = & -\frac{1}{NP}\left(\mathbf{I}_{P-1} + \mathbf{1}_{(P-1)\times(P-1)}\right)\mathbf{1}_{(P-1)\times N} \\
    & = & \frac{-1}{N}\mathbf{1}_{(P-1)\times N}.
\end{eqnarray*}
This gives $\Sigma_{n,N+p}=\Sigma_{N+p,n}=-2/N$.
\end{proof}

\begin{proof}[Proof of Lemma \ref{lemma:covariance-gaussian-4th}]
Recall the Isserlis formula \cite{isserlis1918formula}, which states that for mean-zero jointly Gaussian random variables $(X_1,\ldots,X_{2n})$,
\begin{equation}
\mathbb{E}\left[\prod_{i=1}^{2n} X_i\right] =\sum_{\pi\in\mathscr{M}}\prod_{\{i,j\}\in\pi}\mathbb{E}\left[X_iX_j\right],\label{eq:isserlis}
\end{equation}
where $\mathscr{M}$ is the set of pairings of $\{1,\ldots,2n\}$. Setting $n=4$, $X_1=X_2=X_3=X_4=T_1$ and $X_5=X_6=X_7=X_8=T_2$ in Eq.~\eqref{eq:isserlis}, we see that each pair $\mathbb{E}[X_iX_j]$ contributes one of $\mathbb{E}[T_1^2]=\sigma^2$, $\mathbb{E}[T_2^2]=\sigma^2$ or $\mathbb{E}[T_1T_2]=\rho\sigma^2$. Thus, each summand in Eq.~\eqref{eq:isserlis} contributes $\sigma^8\rho^{\nu(\pi)}$, where $\nu(\pi)$ is the number of cross pairs $(T_1,T_2)$ in the pairing. Since we have $4$ $T_1$'s and $4$ $T_2$'s, the number of cross pairs $\nu$ must be even: $\nu\in\{0,2,4\}$. We proceed to count the number of pairings in each case of $\nu$. If $\nu=0$, the $4$ $T_1$'s are paired among themselves and the $4$ $T_2$'s among themselves. There are $[(4-1)!!]^2=9$ such pairings, leading to a contribution of $9\sigma^8$. If $\nu=4$, each $T_1$ is paired with a unique $T_2$. There are $4!=24$ such pairings, leading to a contribution of $24\rho^4\sigma^8$. Finally, if $\nu=2$, exactly two $T_1$'s are paired with two $T_2$'s. Each pairing of this type must be obtained by picking two $T_1$'s to use in the cross pairs (${4\choose 2}=6$ ways), picking two $T_2$'s to use in the cross pairs (${4\choose 2}=6$ ways), and then matching them ($2$ ways). The remaining two $T_1$'s are paired with each other, as are the remaining $T_2$'s. Thus there are $6\times 6\times 2=72$ such pairings, leading to a contribution of $72\rho^2\sigma^8$. Hence, we obtain $\mathbb{E}[T_1^4T_2^4]=9\sigma^8+24\rho^4\sigma^8+72\rho^2\sigma^8$. To finish the proof, recall that for a mean-zero Gaussian, $\mathbb{E}[T_1^4]=\mathbb{E}[T_2^4]=3\sigma^4$. Therefore $\mathbb{E}[T_1^4]\mathbb{E}[T_2^4]=9\sigma^8$, and taking the difference with $\mathbb{E}[T_1^4T_2^4]$ leads to the desired expression. 
\end{proof}

%%%%%%%%%%%%%%%%%%%%%%%%%%%%%%%%%%%%%%%%%%%%%%
%%%%%%%%%%%% Appendix B: Algorithm %%%%%%%%%%%
%%%%%%%%%%%%%%%%%%%%%%%%%%%%%%%%%%%%%%%%%%%%%%
\section{Admixed Array Exact Enumeration Algorithm}
\label{appsec:algorithm}
\begin{algorithm}[H]
\caption{Parallel Computation of $|\mathscr{A}_{12}(N,P;\bsphi_0,\bsphi_1,\bsa)|$}
\label{alg:geta12-parallel}
\begin{algorithmic}[1]
\Require $N$: number of rows; $P$: number of paired columns; $\bsa$: row sums; $\boldsymbol{\phi}_0, \boldsymbol{\phi}_1 \in \mathbb{N}^P$: column constraints
\Ensure Weighted count $|\mathscr{A}_{12}|$

\Statex \textbf{STEP 1: Preprocessing}
\State Sort $\mathbf{r}=(r_i)\gets \text{sort}(\bsa, \text{descending})$
\State $S \gets \sum_{i=1}^{N} r_i$

\Statex \textbf{STEP 2: Enumerate First-Coordinate Pairs}
\State $\mathcal{F}_0 \gets \emptyset$
\For{$s = \bsphi_1[1]$ \textbf{to} $(2N\mathbf{1} - \bsphi_0)[1]$}
    \For{$a = \max(0, s-N)$ \textbf{to} $\min(N, s)$}
        \State $\mathcal{F}_0 \gets \mathcal{F}_0 \cup \{(a, s-a)\}$
    \EndFor
\EndFor

\Statex \textbf{STEP 3: Parallel Processing}
\ForAll{$(v_1^{(1)}, v_2^{(1)}) \in \mathcal{F}_0$ \textbf{in parallel}}
    \State $\mathcal{C} \gets \textsc{EnumerateSuffix}(v_1^{(1)}, v_2^{(1)}, \boldsymbol{\phi}_0, \boldsymbol{\phi}_1, S)$
    \State Group $\mathcal{C}$ by $L(\mathbf{v}_1, \mathbf{v}_2) = \max_{j}(v_1^{(j)}, v_2^{(j)}) + 2$ into $\{\mathcal{C}_L\}$ \Comment{Add $2$ for conjugate vector representation}
    \ForAll{groups $\mathcal{C}_L$}
        \State Initialize memo table $\mathcal{M}_L \gets \emptyset$
        \ForAll{$(\mathbf{v}_1, \mathbf{v}_2) \in \mathcal{C}_L$}
            \State $\mathbf{c} \gets (\mathbf{v}_1, \mathbf{v}_2)$ \Comment{Column sums: $q_j = v_1^{(j)}$, $q_{P+j} = v_2^{(j)}$}
            \State $|\mathscr{A}| \gets \software{Exact}(\mathbf{r}, \mathbf{c}, \mathcal{M}_L)$ \Comment{See \textcite{miller2013exact}}
            \State $W \gets \prod_{j=1}^{P} \binom{s_j}{\phi_{1,j}} \binom{2N - s_j}{\phi_{0,j}}$ where $s_j = v_1^{(j)} + v_2^{(j)}$
            \State $\text{LocalSum} \gets \text{LocalSum} + W \cdot |\mathscr{A}|$
        \EndFor
    \EndFor
\EndFor
\State \Return $\sum \text{LocalSum}$

\Statex
\Function{EnumerateSuffix}{$v_1^{(1)}, v_2^{(1)}, \boldsymbol{\phi}_0, \boldsymbol{\phi}_1, S$}
    \State \Return all $(\mathbf{v}_1, \mathbf{v}_2)$ extending $(v_1^{(1)}, v_2^{(1)})$ such that:
    \Statex \hspace{\algorithmicindent}\hspace{\algorithmicindent} $\phi_{1,j} \leq v_1^{(j)} + v_2^{(j)} \leq 2N - \phi_{0,j}$ for $j = 2, \ldots, P$
    \Statex \hspace{\algorithmicindent}\hspace{\algorithmicindent} $\max(0, s_j - N) \leq v_1^{(j)} \leq \min(N, s_j)$ for $j = 2, \ldots, P$
    \Statex \hspace{\algorithmicindent}\hspace{\algorithmicindent} $\sum_{j=1}^{P} (v_1^{(j)} + v_2^{(j)}) = S$
\EndFunction

\end{algorithmic}
\end{algorithm}

% \renewcommand{\theHsection}{\Alph{section}} % if using hyperref
% \renewcommand{\thesection}{\Alph{section}}
% \titleformat{\section}{\normalfont\Large\bfseries}{Appendix \thesection}{1em}{}
% \renewcommand{\thesubsection}{\thesection.\arabic{subsection}}
% \renewcommand{\theequation}{S\arabic{equation}}
% \setcounter{equation}{0}

\end{appendices}
\end{document}